\documentclass[onefignum,onetabnum]{siamonline220329}

\usepackage{amsfonts,amssymb}
\usepackage{graphicx}
\usepackage{subfigure}

\usepackage{soul}

\usepackage{mathrsfs,mathtools,bbm,bm,stmaryrd}
\usepackage{nicefrac}

\usepackage{enumitem}
\setlist[enumerate]{leftmargin=.5in}
\setlist[itemize]{leftmargin=.5in}

\setlength\headheight{13pt}
\headers{Optimized filter functions for filtered back projection reconstructions}{M.~Beckmann, and J.~Nickel}

\title{Optimized filter functions for filtered back projection reconstructions%
\thanks{The authors are listed in alphabetical order. Corresponding author: Judith Nickel.\funding{This work was supported by the Deutsche Forschungsgemeinschaft (DFG) - project numbers 530863002 and 281474342/GRK2224/2 - as well as the Bundesministerium für Bildung und Forschung (BMBF) - funding code 03TR07W11A. The responsibility for the content of this publication lies with the authors.}}}

\author{Matthias Beckmann\thanks{Center for Industrial Mathematics, University of Bremen, Germany \& Department of Electrical and Electronic Engineering, Imperial College London, UK 
  (\email{research@mbeckmann.de}).}
\and Judith Nickel\thanks{Center for Industrial Mathematics, University of Bremen, Germany 
  (\email{judith.nickel@uni-bremen.de}).}}

\ifpdf
\hypersetup{
  pdftitle={Optimal Filter for FBP},
  pdfauthor={M. Beckmann and J. Nickel}
}
\fi


\def\N			{\mathbb N}
\def\Z			{\mathbb Z}

\def\R			{\mathbb R}

\def\Sphere		{\mathbb{S}^1}

\def\Radon		{\mathcal R}
\def\Back		{\mathcal B}
\def\Fourier	{\mathcal F}
\def\Int		{\mathcal I}

\def\ind		{\mathbbm 1}

\def\L			{\mathrm L}
\def\H			{\mathrm H}

\def\Schwartz	{\mathcal S}

\def\Test		{\mathscr D}

\def\d			{\mathrm d}
\def\e			{\mathrm e}
\def\i			{\mathrm i}

\def\FBP		{\mathrm{FBP}}
\def\loc		{\mathrm{loc}}

\def\F          {\mathcal{F}}
\def\P          {\mathrm{P}}

%
\DeclareMathOperator{\sinc}{sinc}
\DeclareMathOperator{\supp}{supp}
\DeclareMathOperator{\diam}{diam}
\DeclareMathOperator{\Span}{span}

\DeclareMathOperator*{\argmin}{arg\,min}

\DeclareMathOperator{\Ex}{\mathbb{E}}

\newcommand{\norm}[1]{\left\lVert#1\right\rVert}

\newsiamremark{remark}{Remark}

\begin{document}

\maketitle

\begin{abstract}
The method of filtered back projection (FBP) is a widely used reconstruction technique in X-ray computerized tomography (CT), which is particularly important in clinical diagnostics. To reduce scanning times and radiation doses in medical CT settings, enhancing the reconstruction quality of the FBP method is essential. To this end, this paper focuses on analytically optimizing the applied filter function. We derive a formula for the filter that minimizes the expected squared $\L^2$-norm of the difference between the FBP reconstruction, given infinite noisy measurement data, and the true target function. Additionally, we adapt our derived filter to the case of finitely many measurements. The resulting filter functions have a closed-form representation, do not require a training dataset, and, thus, provide an easy-to-implement, out-of-the-box solution. Our theoretical findings are supported by numerical experiments based on both simulated and real CT data.
\end{abstract}

\begin{keywords}
X-ray tomography, image reconstruction, filtered back projection, filter design, Gaussian noise.
\end{keywords}

\begin{AMS}
44A12, 65R32, 94A12.
\end{AMS}


\section{Introduction}

The method of \textit{filtered back projection} (FBP) is a commonly used reconstruction algorithm in X-ray computerized tomography (CT) \cite{Pan2009}, which aims at determining the internal structure of objects under investigation by measuring the attenuation of X-rays. Until today, CT is widely employed in medical diagnostics and non-destructive testing. The measured data can be modelled as line integral values of the \textit{attenuation function} $f\equiv f(x,y)$ of the object, formally represented via its \textit{Radon transform} $\Radon f\equiv \Radon f(s,\varphi)$, defined by
\begin{equation*}
\Radon f (s,\varphi) = \int_{\{x\cos(\varphi) + y\sin(\varphi) = s\}} f(x,y) \: \d(x,y)
\quad \text{for } (s,\varphi) \in \R \times [0,\pi).
\end{equation*}
With this, we can formulate the \textit{CT reconstruction problem} as determining the attenuation function $f$ from its Radon data
\begin{equation*}
\{\Radon f(s,\varphi) \mid s \in \R, ~ \varphi \in [0,\pi)\}.
\end{equation*}
Hence, one needs to invert the Radon transform $\Radon$. Based on the work of Johann Radon~\cite{Radon1917}, its analytical inversion was proven to be performed by the FBP formula~\cite{Feeman2015,Natterer2001a}, given by
\begin{equation*}
f(x,y) = \frac{1}{2} \, \Back(\Fourier^{-1}[\vert\sigma\vert\Fourier (\Radon f)(\sigma,\varphi)])(x,y)
\quad \text{for all } (x,y) \in \R^2,
\end{equation*}
where $\Fourier h$ denotes the univariate Fourier transform acting on the radial variable $s$ of a function $h\equiv h(s,\varphi)$, i.e.,
\begin{equation*}
\Fourier h(\sigma,\varphi) = \int_\R h(s,\varphi)\, \e^{-i s \sigma} \: \d s
\quad \text{for } (\sigma,\varphi) \in \R \times [0,\pi),
\end{equation*}
and $\Back h$ is the back projection of $h \equiv h(s,\varphi)$, defined by
\begin{equation*}
\Back h(x,y) = \frac{1}{\pi} \int_0^\pi h(x\cos(\varphi)+y\sin(\varphi),\varphi) \: \d \varphi
\quad \text{for } (x,y) \in \R^2.
\end{equation*}
The FBP formula is highly sensitive to noise as the involved filter $|\sigma|$ particularly amplifies high-frequency components.
Since, in practice, only noisy Radon data $g^\varepsilon$ can be measured, it is necessary to stabilize the FBP formula.
To this end, the filter $|\sigma|$ is typically replaced by a compactly supported low-pass filter $A_L\equiv A_L(\sigma)$ with bandwidth $L>0$ \cite{Natterer2001}, leading to the \textit{approximate FBP reconstruction formula} 
\begin{equation*}
f_L^\varepsilon(x,y) = \frac{1}{2} \, \Back(\Fourier^{-1}[A_L(\sigma)\Fourier g^\varepsilon(\sigma,\varphi)])(x,y)
\quad \text{for } (x,y) \in \R^2.
\end{equation*}
This approximate reconstruction formula is computational efficient, especially when compared to iterative reconstruction approaches such as the algebraic reconstruction technique (ART) and simultaneous iterative reconstruction technique (SIRT).

However, the reconstruction quality deteriorates with noisy measurement data, in particular when using standard filter functions (cf.~\cite{Pelt2014}). Additionally, in medical CT, reducing scanning times and radiation doses is crucial due to the harmful effects of X-radiation, which degrades the reconstruction quality even further. Consequently, comprehensive research has focused on improving the reconstruction quality of the FBP method.
One approach is to optimize the introduced filter function $A_L$ of the approximate FBP formula. This topic has recently regained attention in the research community, with several studies focusing on enhancing reconstruction quality by adapting the filter function. These approaches can be divided into two categories: analytical methods that define desirable properties of the filter and data-driven methods that involve learning a filter function.  However, most methods in the literature do not provide a closed-form solution for the filter. Instead, they require solving an optimization problem or rely on datasets that are challenging to obtain, particularly in the context of medical CT. Moreover, noise in the measured data is often only implicitly addressed in the design of the filter function. For further details, we refer to Section~\ref{Sec:Related_lit}.

As opposed to this, in this work, we derive an analytical formula for an optimized filter function enhancing the reconstruction quality of the approximate FBP formula. To achieve this, we model the noise in the measured data as a generalized stochastic process that approximates Gaussian white noise, which is grounded on observations in the literature. Based on this noise model, we construct our filter function by minimizing the expectation of the squared $\L^2$-norm of the difference between the approximate FBP reconstruction $f_L^\varepsilon$ and the true attenuation function $f$. We derive a formula for the resulting filter function in the context of complete measurement data and provide an adaptation for the case of a finite number of measurements.
Our proposed filters are as straightforward to implement as standard filters, quite fast to compute and do not require a training dataset, making them readily applicable.

The manuscript is organized as follows. Section~\ref{Sec:Related_lit} is devoted to a more detailed review of related result in the literature. In Section~\ref{Sec:Noise}, we develop a suitable model for the measurement noise based on generalized stochastic Gaussian white noise processes. We then derive our optimized filter function in the continuous setting in Section~\ref{sec:continuous filter} and adapt it to finite measurement data in Section~\ref{sec: discrete filter}. We validate the performance of our filter function through numerical experiments in Section~\ref{sec:Numerik} by comparing it with filter functions from the literature. 
Finally, Section~\ref{sec:outlook} concludes with a discussion of our findings and future research directions.

\section{Related results}\label{Sec:Related_lit}

The FBP method remains one of the most commonly used reconstruction algorithms in CT and extensive research has focused on improving its reconstruction quality. 
In the following, we describe various approaches that have been developed in the literature to address this task.
One idea involves developing pre- and post-processing methods, either for denoising the measured data or enhancing the computed reconstructions. Besides classical methods like Wiener filtering and wavelet decomposition, recent works have explored the use of deep learning techniques. For more details on pre- and post-processing methods in the context of CT, we refer the reader to the overview articles \cite{Balogh2022, Diwakar2018, Omar2018, Sadia2024, Ravishankar2017}.

In addition, the FBP method allows for a direct way to improve the reconstruction quality by changing the involved filter function $A_L$, which has been the focus of several studies. For noiseless measurement data, \cite{Beckmann2015} noted that the classical Ram-Lak filter is the best choice w.r.t.\ the $\L^2$-norm of the reconstruction error given complete measurement data $g$. In~\cite{Shi2013}, the authors adapt the Ram-Lak filter for noiseless finite measurements $g_D$ by expressing the filter function through its Fourier series expansion and determining the coefficients by solving an optimization problem derived from a reformulation of the FBP method.

In real-world applications, however, noise in the measured data has to be considered when designing filter functions. In \cite{Pelt2014}, a data-dependent filter for the FBP method is introduced, which minimizes the squared difference between the Radon transform of the reconstruction and the noisy data $g_D^\varepsilon$ so that a minimization problem must be solved for each measurement independently. The authors call the resulting method the minimum residual FBP method, abbreviated as MR-FBP. They also add a regularization term to the minimization problem, resulting in the so-called MR-FBP$_\mathrm{GM}$ method. The regularization term incorporates a constraint on the horizontal and vertical discrete gradients of the reconstruction and can be controlled by an additional hyperparameter $\lambda>0$. 
 
Recently, the authors in \cite{Kabri2024} aim at choosing the filter function by minimizing the true risk w.r.t.\ the squared $\L^2$-error between the FBP reconstruction and the ground truth, i.e., they consider the minimization problem
\begin{equation*}
\min_{A_L} \Ex \left(\Bigl\lVert f - \frac{1}{2} \, \Back(\Fourier^{-1} A_L \ast g^\varepsilon)\Bigr\rVert_{\L^2(\R^2)}^2\right),
\end{equation*}
where the expectation is w.r.t.\ the ground truth $f$ and the noisy measurement data $g^\varepsilon$.
Since calculating the true risk is infeasible in practice due to an unknown data distribution, they propose to minimize the empirical risk instead by replacing the expectation by the mean over a finite dataset consisting of noiseless measurements and additive noise samples.
We refer to this as the empirical risk minimization FBP method, in short ERM-FBP.

Another approach for improving reconstruction quality is to mimic other reconstruction methods, such as iterative reconstruction techniques. This approach is demonstrated in \cite{Pelt2015}, where the filter is designed in such a way that the FBP method approximates the algebraic simultaneous iterative reconstruction technique, resulting in the SIRT-FBP method. The SIRT-FBP filter depends solely on the discretization parameters and can be pre-calculated before reconstructing. This filter is applied to a large-scale real-world dataset in \cite{Pelt2016}.

The authors of~\cite{Horbelt2002} design filters that integrate both the filtering process with ideal ramp filter and an interpolation scheme, which is commonly employed when applying the FBP method, to reduce errors caused by interpolation. The authors discovered that their filters substantially enhance reconstructions only for interpolation schemes of up to second order.

Besides these analytical approaches, there have been recent developments in learning-based selection of filter functions. As this paper focuses on analytically optimizing the filter function, we will refrain from discussing it here. Instead, the interested reader is encouraged to consult \cite{Lagerwerf2021, Pelt2013, Shtok2008, Syben2018, Xu2021} for various methods involving neural networks.

\section{White noise and technical lemmata}\label{Sec:Noise}

In applications, the Radon data $\Radon f$ is always corrupted by noise caused, for example, by scattered radiation or the conversion of the measured X-ray photons into a digital signal by the detectors, cf.~\cite{Buzug2010, Epstein2008, Herman2009, Kak2001, Whiting2006}. Due to the various sources of measurement errors and unknown system calibrations in real-world applications, it is in general not feasible to analytically model the probability distribution of the noise, see~\cite{Fu2017, Li2004}. Instead, one needs to approximate the distribution based on measurement data, which is dealt with in several papers, e.g.,~\cite{Li2004, Liao2012, Lu2001, Wang2008}. The findings in the literature indicate that the measurement noise can be modelled by independent identically distributed (iid) additive Gaussian random variables, even in low intensity regimes. This process is commonly referred to as Gaussian white noise. 

Its formal definition is based on the classical Wiener process $W:\Omega^\prime \times \Omega\to\R$, also called Brownian motion process, with $\Omega^\prime\subseteq \R^n$ and a probability space $(\Omega,\F,\P)$, which describes the path of a particle without inertia caused by Brownian motion. Gaussian white noise, typically denoted by $H$, is then defined as the distributional derivative of the Wiener process and characterizes the velocity of a particle without inertia due to Brownian motion. To be more precise, $H:\Test(\Omega^\prime)\times \Omega\to\R$ is given by
\begin{equation*}
H(\phi,\omega) = -\langle W(\cdot, \omega),\phi^\prime \rangle = -\int_{\Omega^\prime} W(x,\omega)\phi^\prime(x) \: \d x
\end{equation*}
for all $\phi\in \Test(\Omega^\prime)$ and $\omega\in\Omega$, where $\Test(\Omega^\prime)$ denotes the space of real-valued test functions. 
Note that, for simplicity, we consider the space of real-valued test functions $\Test(\Omega^\prime)$ as well as the space of real-valued Schwartz functions $\Schwartz(\R^n)$ throughout this work. Our results, however, can be easily modified to complex-valued functions.
The properties of the Wiener process imply that $H(\phi,\cdot)$ is a Gaussian random variable for all $\phi\in \Test(\Omega^\prime)$ with
\begin{equation*}
\Ex (H(\phi,\cdot)) = 0 \quad \text{and} \quad \Ex (H(\phi,\cdot) H(\psi, \cdot)) = (\phi,\psi)_{\L^2(\Omega^\prime)}
\end{equation*}
for all $\phi,\psi\in \Test(\Omega^\prime)$. For a precise definition of a Wiener process and a detailed derivation of the above properties we refer the reader to \cite[Chapter III]{Gelfand1964} and \cite[Chapter 1]{Sobczyk1991}. 
Since a Gaussian process is uniquely defined by its mean and covariance, cf.~\cite[Chapter III.2.3]{Gelfand1964}, we obtain the following equivalent definition, which is more commonly used in the setting of inverse problems, see, e.g.,~\cite[p.3]{Kekkonen2014}.

\begin{definition}[Continuous white noise]
{\em Continuous white noise} is a function  
\begin{equation*}
H: \Test(\Omega^\prime) \times \Omega \to \R 
\end{equation*}
such that $H(\cdot,\omega)\in\Test^\prime(\Omega^\prime)$ for all $\omega\in\Omega$ and
$H(\phi, \cdot): \Omega \to \R$ is a Gaussian random variable on the probability space $(\Omega, \F, \P)$ for all $\phi \in \Test(\Omega^\prime)$ with 
\begin{equation*}
\Ex (H(\phi,\cdot)) = 0 \quad \text{and} \quad \Ex (H(\phi,\cdot) H(\psi, \cdot)) = (\phi,\psi)_{\L^2(\Omega^\prime)}
\end{equation*}
for all $\phi,\, \psi \in \Test(\Omega^\prime)$.
In addition, continuous white noise $H^\varepsilon: \Test(\Omega^\prime) \times \Omega \to \R$ with {\em noise level} $\varepsilon>0$ is defined by $H^\varepsilon = \varepsilon H$.
\end{definition}

We remark that continuous white noise is a so-called {\em generalized stochastic process}, as introduced in \cite[Chapter III]{Gelfand1964}. Moreover, one can prove that $H(\cdot,\omega)\in \Schwartz^\prime(\R^n)$ for almost all $\omega\in \Omega$, see~\cite{Dalang2017, Fageot2022} and, thus, $H$ can be equivalently defined as a mapping from $\Schwartz(\R^n) \times \Omega$ to~$\R$ such that $H(\cdot,\omega)$ has support in $\Omega^\prime \subseteq \R^n$ in the sense of distributions.

The definition of continuous white noise $H$ implies that the amount of power is the same for all frequencies, i.e., the power spectrum of the process is flat, cf.~\cite[Chapter 1.5.4]{Gardiner2009}. Such a process cannot exist in reality as it would require an infinite amount of energy. Instead, continuous white noise can be understood as an idealised model of a process with approximately independent values at every point. To circumvent this shortcoming of white noise, physicists often approximate white noise by assuming that points very close to each other are correlated, see~\cite[Chapter 4.1]{Gardiner2009} and~\cite[Chapter I.10]{Sobczyk1991}. This motivates the following definition of approximate white noise.

\begin{definition}[Approximate continuous white noise]
\label{Def:Approx white noise}
{\em Approximate continuous white noise} $H_a: \Schwartz(\R^n) \times \Omega \to \R $ with support in $\Omega^\prime\subseteq \R^n$ is defined as
\begin{equation*}
H_a(\phi,\omega) = \int_{\R^n} h_a(x,\omega) \, \phi(x) \, \ind_{\Omega^\prime}(x) \: \d x, 
\end{equation*}
where $\ind_{\Omega^\prime}: \R^n \to \{0,1\}$ is the characteristic function of $\Omega^\prime$ and $h_a : \R^n\times \Omega\to\R$ for $a > 0$ is a jointly-measurable function with the property that, for any $k \in \N$, $(h_a(x_1, \cdot),\dots, h_a(x_k, \cdot))$ with $x_1, \dots, x_k\in \R^n$ is a (multivariate) Gaussian random variable on the probability space $(\Omega, \F, \P)$ with 
\begin{equation*}
\Ex (h_a(x,\cdot)) = 0 \quad \text{and} \quad \Ex (h_a(x,\cdot) h_a(y, \cdot)) = \delta_a(x-y) \quad \text{for all } x,y \in\R^n,
\end{equation*}
where $(\delta_a)_{a>0}$ is a Dirac sequence with even $\delta_a: \R^n\to\R_{\geq 0}$ satisfying $\delta_a \to \delta$ for $a \to \infty$.
In addition, approximate continuous white noise $H_a^\varepsilon: \Schwartz(\R^n) \times \Omega \to \R $ of {\em noise level} $\varepsilon>0$ is defined by $H_a^\varepsilon= \varepsilon\, H_a$.
\end{definition}

In the following, we restrict our investigations to $H_a=H_a^1$. Note, however, that all results also hold for the general case of $H_a^\varepsilon$ with $\varepsilon>0$.
Let us first observe that for fixed $\omega \in \Omega$, the realization $H_a(\cdot,\omega) \in \Schwartz^\prime(\R^n)$ is a tempered distribution.
Moreover, for fixed $x \in \R^n$, the variance $\Ex (h_a(x,\cdot) h_a(x, \cdot)) = \delta_a(0)$ diverges for $a\to \infty$.

In the case that $\delta_a(t) = \frac{a}{2} \exp^{-a | t |}$ for $t\in\R$, $h_a$ corresponds to a stationary {\em Ornstein-Uhlenbeck} process with zero mean \cite[Chapter 3.8.4 \& 4.1]{Gardiner2009}. The Ornstein-Uhlenbeck process was first introduced as an improved model of the velocity of a particle caused by Brownian motion \cite{Uhlenbeck1930} and is commonly used as an approximation of Gaussian white noise in various use cases, see, e.g., \cite{Bibbona2008}, \cite[Chapter 4.1]{Gardiner2009}, \cite{Hottovy2019} and \cite[Chapter I.10]{Sobczyk1991}. In this sense, Definition~\ref{Def:Approx white noise} can be seen as a generalization of the Ornstein-Uhlenbeck process, allowing for a wider variety of processes. 

The approximate white noise $H_a$ is again a Gaussian random variable for fixed $\phi\in\Schwartz(\R^n)$, whose first and second moments depend on the function $h_a$. This is characterized in the following lemma. 

\begin{lemma}\label{Lem:Moments Approx Noise}
Let $H_a: \Schwartz(\R^n) \times \Omega \to \R $ be an approximate continuous white noise with support in $\Omega^\prime\subseteq \R^n$. Then, $H_a(\phi, \cdot): \Omega \to \R$ is a Gaussian random variable for all $\phi \in \Schwartz(\R^n)$ with
\begin{equation*}
\Ex (H_a(\phi,\cdot)) = 0 \quad \text{and} \quad \Ex (H_a(\phi,\cdot) H_a(\psi, \cdot)) = 
\int_{\Omega^\prime} \int_{\Omega^\prime} \delta_a(x-y) \, \phi(x) \, \psi(y) \: \d x \, \d y
\end{equation*}
for all $\psi, \phi\in \Schwartz(\R^n)$.
\end{lemma}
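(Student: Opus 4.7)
The plan is to deduce the three claims — Gaussianity of $H_a(\phi,\cdot)$, its vanishing mean, and the stated covariance — by combining Fubini's theorem with an approximation of the defining integral by Riemann sums. The Fubini step is routine; Gaussianity is the substantive part, and it reduces to the observation that each Riemann sum is, by the very definition of $h_a$, a finite linear combination of jointly Gaussian variables and hence Gaussian itself.

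For the moments, I would first verify that the integrand $(x,\omega)\mapsto h_a(x,\omega)\,\phi(x)\,\ind_{\Omega^\prime}(x)$ is jointly measurable and absolutely integrable on $\R^n\times\Omega$. This uses the pointwise bound $\Ex|h_a(x,\cdot)| = \sqrt{2\delta_a(0)/\pi}$ together with $\phi\in\Schwartz(\R^n)$, which forces $\phi\,\ind_{\Omega^\prime}\in\L^1(\R^n)$. Fubini then yields $\Ex(H_a(\phi,\cdot))=0$ directly from the hypothesis $\Ex(h_a(x,\cdot))=0$. For the covariance I would express $H_a(\phi,\cdot)\,H_a(\psi,\cdot)$ as a double integral over $\Omega^\prime\times\Omega^\prime$, observe that its integrand is absolutely integrable via the Cauchy--Schwarz bound $|\Ex(h_a(x,\cdot)h_a(y,\cdot))|\le\delta_a(0)$ and the Schwartz decay of $\phi$ and $\psi$, and apply Fubini once more to arrive at the asserted expression.

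For Gaussianity, I would fix $\phi\in\Schwartz(\R^n)$, pick an exhausting sequence of partitions $\{E_i^{(N)}\}_{i=1}^{k_N}$ of the truncation $\Omega^\prime\cap\{|x|\le R_N\}$ with mesh tending to $0$ and sample points $x_i^{(N)}\in E_i^{(N)}$, and form the Riemann sums
\begin{equation*}
S_N(\omega) = \sum_{i=1}^{k_N} \phi(x_i^{(N)})\, h_a(x_i^{(N)},\omega)\, |E_i^{(N)}|.
\end{equation*}
By the $k$-point Gaussianity clause of Definition~\ref{Def:Approx white noise} with $k=k_N$, each $S_N$ is a centered Gaussian random variable. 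A direct expansion of $\Ex[(S_N-H_a(\phi,\cdot))^2]$ in terms of the covariance kernel $\delta_a$, dominated by $\delta_a(0)\,|\phi(x)\phi(y)|\,\ind_{\Omega^\prime}(x)\,\ind_{\Omega^\prime}(y)\in\L^1(\R^{2n})$, yields $\L^2$-convergence $S_N\to H_a(\phi,\cdot)$. Since the class of centered Gaussians is closed under $\L^2$-convergence (the characteristic functions pass to the limit), $H_a(\phi,\cdot)$ is itself Gaussian, with the moments already identified above.

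The main technical obstacle is to accommodate a possibly unbounded $\Omega^\prime$: the pointwise variance $\delta_a(0)$ is finite for each $a$, but control of the tails of the Riemann approximation is needed. Here the Schwartz decay of $\phi$ is essential, as it supplies the integrable majorant required for both the Fubini computations and the dominated-convergence argument underlying $S_N\to H_a(\phi,\cdot)$. Everything else is a careful but standard combination of joint measurability, Fubini--Tonelli, and the preservation of Gaussianity under $\L^2$-limits.
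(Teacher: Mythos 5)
Your treatment of the moments is exactly the paper's: the same folded-normal identity $\Ex(\lvert h_a(x,\cdot)\rvert)=\sqrt{\nicefrac{2\delta_a(0)}{\pi}}$, the same Cauchy--Schwarz bound $\lvert\Ex(h_a(x,\cdot)h_a(y,\cdot))\rvert\leq\delta_a(0)$, and the same two applications of Fubini, so that part is fine. The gap is in your Gaussianity step. You claim $\L^2$-convergence $S_N\to H_a(\phi,\cdot)$ from an expansion of $\Ex[(S_N-H_a(\phi,\cdot))^2]$, but after expanding, the vanishing of this quantity requires that the Riemann sums
\begin{equation*}
\sum_{i,j}\phi(x_i^{(N)})\,\phi(x_j^{(N)})\,\delta_a(x_i^{(N)}-x_j^{(N)})\,\lvert E_i^{(N)}\rvert\,\lvert E_j^{(N)}\rvert
\end{equation*}
converge to $\int_{\Omega^\prime}\int_{\Omega^\prime}\delta_a(x-y)\,\phi(x)\,\phi(y)\:\d x\,\d y$ (and similarly for the cross term). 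That is a question of Riemann integrability with arbitrary tags, and a dominated-convergence majorant $\delta_a(0)\,\lvert\phi(x)\phi(y)\rvert\in\L^1$ is of no help there: domination controls integrals, not tagged sums. Definition~\ref{Def:Approx white noise} assumes only that $\delta_a$ is an even, nonnegative, measurable kernel in a Dirac sequence --- no continuity --- and $\Omega^\prime$ is an arbitrary subset of $\R^n$. If $\delta_a$ fails to be almost everywhere continuous, or equivalently if $x\mapsto h_a(x,\cdot)$ fails to be continuous in quadratic mean (note $\Ex[(h_a(x,\cdot)-h_a(y,\cdot))^2]=2\delta_a(0)-2\delta_a(x-y)$), adversarially chosen sample points make the sums miss the integral; likewise $\ind_{\Omega^\prime}$ is not Riemann-approximable when $\partial\Omega^\prime$ has positive measure. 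The paper itself later uses a discontinuous box kernel $\delta_a$ in the discrete setting, so this is not a vacuous worry: at the stated level of generality your mesh-to-zero claim is unjustified.

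The paper avoids constructing any approximating sequence. It forms the Gaussian space $\mathcal{H}=\overline{\Span\{h_a(x,\cdot)\mid x\in\R^n\}}$ in $\L^2(\Omega)$, which consists of Gaussian variables, uses $\mathcal{H}=(\mathcal{H}^\perp)^\perp$, and shows via one further Fubini computation that $\Ex(X\,H_a(\phi,\cdot))=0$ for every $X\in\mathcal{H}^\perp$; hence $H_a(\phi,\cdot)\in\mathcal{H}$ and is Gaussian. This is the same closure principle you invoke (Gaussianity survives $\L^2$-limits of linear combinations), but the existence of a convergent sequence of combinations is delivered abstractly by Hilbert-space duality rather than by explicit Riemann sums, and it needs nothing beyond joint measurability and the integrability bounds you already established. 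Your argument becomes correct under added hypotheses --- e.g.\ $\delta_a$ continuous (hence the process $\L^2$-continuous) together with partitions adapted to the truncated $\Omega^\prime$ --- but to prove the lemma as stated, replace the Riemann-sum step by the span/orthocomplement argument; the rest of your write-up then stands.
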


\begin{proof}
Let us start with the calculation of the expectation of $H_a(\phi,\cdot)$ for $\phi \in \Schwartz(\R^n)$. To this end, observe that $H_a(\phi,\cdot)$ is a random variable as $h_a$ is jointly-measurable and
\begin{equation*}
\int_{\Omega}\int_{\Omega^\prime} \lvert h_a(x,\omega)\rvert \, \lvert \phi(x) \rvert \: \d x \, \d\P (\omega)
= \sqrt{\frac{2\delta_a(0)}{\pi}} \int_{\Omega^\prime} \lvert\phi(x) \rvert \: \d x < \infty,
\end{equation*}
which follows from Fubini's Theorem and $\Ex(\lvert h_a(x,\cdot)\rvert) = \sqrt{\nicefrac{2\delta_a(0)}{\pi}}$ for all $x\in\R^n$ due to~\cite{Tsagris2014}.
Hence, applying Fubini's Theorem again results in
\begin{equation*}
\Ex (H_a(\phi,\cdot)) = \int_{\Omega}\int_{\Omega^\prime}  h_a(x,\omega)\, \phi(x) \: \d x \, \d\P (\omega) = \int_{\Omega^\prime} \Ex( h_a(x,\cdot))\, \phi(x) \: \d x = 0.
\end{equation*} 
For the calculation of the covariance of $H_a(\phi,\cdot)$ and $H_a(\psi,\cdot)$ for $\phi,\psi \in \Schwartz(\R^n)$ note that
\begin{align*}
\int_{\Omega} \int_{\Omega^\prime} \int_{\Omega^\prime} & \lvert h_a(x,\omega) h_a(y,\omega)\rvert \, \lvert \phi(x) \psi(y) \rvert \: \d x\,\d y \, \d\P(\omega) \\
&= \int_{\Omega^\prime} \int_{\Omega^\prime} \Ex\left(\lvert h_a(x,\cdot) h_a(y,\cdot)\rvert\right) \lvert \phi(x) \psi(y) \rvert \: \d x \,\d y \\
&\leq \int_{\Omega^\prime} \int_{\Omega^\prime} \Ex\left(\lvert h_a(x,\cdot) \rvert^2\right)^{\nicefrac{1}{2}} \Ex\left(\lvert h_a(y,\cdot) \rvert^2\right)^{\nicefrac{1}{2}} \, \lvert\phi(x) \psi(y) \rvert \: \d x \,\d y 
\end{align*}
by Fubini's Theorem and Cauchy Schwarz's inequality so that
\begin{equation*}
\int_{\Omega} \int_{\Omega^\prime} \int_{\Omega^\prime} \lvert h_a(x,\omega) h_a(y,\omega)\rvert \, \lvert \phi(x) \psi(y) \rvert \: \d x\,\d y \, \d\P(\omega) \leq \delta_a(0) \int_{\Omega^\prime} \int_{\Omega^\prime} \lvert\phi(x) \psi(y)\rvert \: \d x \,\d y < \infty.
\end{equation*} 
Consequently, we obtain
\begin{align*}
\Ex(H_a(\phi,\cdot) H_a(\psi, \cdot)) &= \int_{\Omega}\int_{\Omega^\prime} \int_{\Omega^\prime} h_a(x,\omega) h_a(y,\omega) \, \phi(x) \psi(y)  \: \d x \, \d y \, \d\P(\omega) \\
&= \int_{\Omega^\prime} \int_{\Omega^\prime} \Ex(h_a(x,\cdot) h_a(y,\cdot)) \, \phi(x) \psi(y) \: \d x \, \d y \\
&= \int_{\Omega^\prime} \int_{\Omega^\prime} \delta_a(x-y) \, \phi(x) \psi(y) \: \d x \, \d y.
\end{align*}

In order to prove that $H_a(\phi, \cdot): \Omega \to \R$ is a Gaussian random variable for all $\phi \in \Schwartz(\R^n)$, we consider the set
\begin{align*}
\mathcal{H} = \overline{\Span\{h_a(x,\cdot): \Omega \to \R \mid x \in \R^n\}},
\end{align*}
where the closure is w.r.t.\ the canonical norm of the Hilbert space of square integrable random variables on $(\Omega, \F, \P)$. As $h_a$ is a Gaussian process by assumption, $\mathcal{H}$ consists of Gaussian random variables. The aim now is to show that $H_a(\phi, \cdot)\in\mathcal{H}$, where we have $\mathcal{H} = (\mathcal{H}^\perp)^\perp$ since $\mathcal{H}$ is closed. We have already seen that $H_a(\phi, \cdot)$ is a square-integrable random variable. Thus, it remains to prove that $\Ex(X H_a(\phi, \cdot)) = 0$ for all $X\in \mathcal{H}^\perp$. To this end, note that 
\begin{align*}
\int_{\Omega} \Ex\left(\lvert X\rvert \, \lvert h_a(x,\cdot)\rvert\right) \lvert \phi(x) \rvert \: \d x &\leq \int_{\Omega} \Ex\left(\lvert X \rvert^2\right)^{\nicefrac{1}{2}} \Ex\left(\lvert h_a(x,\cdot) \rvert^2\right)^{\nicefrac{1}{2}} \lvert \phi(x) \rvert \: \d x \\
&= \Ex\left(\lvert X \rvert^2\right)^{\nicefrac{1}{2}} \delta_a(0)^{\nicefrac{1}{2}} \int_{\Omega} \lvert \phi(x) \rvert \: \d x < \infty
\end{align*}
due to Cauchy Schwarz's inequality. As a result, Fubini's Theorem yields
\begin{align*}
\Ex(X H_a(\phi, \cdot)) = \int_{\Omega^\prime} \Ex(X h_a(x, \cdot)) \phi(x) \: \d x = 0
\end{align*}
because $h_a(x,\cdot) \in \mathcal{H}$ for all $x \in \R^n$ and the proof is complete.
\end{proof}

These properties of the approximate continuous white noise $H_a$ imply its convergence to white noise $H$ as specified in the following lemma.

\begin{lemma}\label{Lem:Convergence of approximate white noise}
Let $H_a: \Schwartz(\R^n) \times \Omega \to \R $ be an approximate continuous white noise with support in $\Omega^\prime\subseteq \R^n$. Then, for all $\phi \in \Test(\Omega^\prime)$, the sequence of random variables $(H_a (\phi,\cdot))_{a>0}$ converges weakly\footnote{A sequence of random variables $(X_n)_n$ converges weakly to a random variable $X$ with cumulative distribution function $F$ if the corresponding cumulative distribution functions $(F_n)_n$ converge pointwise to $F$ at all points of continuity of $F$, cf.~\cite[Chapter 5]{Billingsley1995}.} to a Gaussian random variable $H_0(\phi,\cdot): \Omega \to \R$ with 
\begin{equation}\label{Eq: Ex and Cov of limit}
\Ex\left(H_0(\phi,\cdot)\right) = 0
\quad \text{and} \quad
\Ex\left(H_0(\phi,\cdot) H_0(\psi, \cdot)\right) = (\phi,\psi)_{\L^2(\Omega^\prime)}
\end{equation}
for all $\psi \in \Test(\Omega^\prime)$. 
\end{lemma}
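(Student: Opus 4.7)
The plan is to reduce the weak convergence to a scalar limit of variances and then invoke Lévy's continuity theorem. By Lemma~\ref{Lem:Moments Approx Noise}, for each $\phi\in\Test(\Omega^\prime)$ the random variable $H_a(\phi,\cdot)$ is centered Gaussian with variance
\begin{equation*}
\sigma_a^2(\phi) = \int_{\Omega^\prime}\int_{\Omega^\prime} \delta_a(x-y)\,\phi(x)\,\phi(y) \:\d x\,\d y,
\end{equation*}
so its law is completely determined by this scalar. The first task is therefore to identify $\lim_{a\to\infty}\sigma_a^2(\phi)$.

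The natural candidate is $\|\phi\|_{\L^2(\Omega^\prime)}^2$. To justify this, I would rewrite the inner integral as the convolution $(\delta_a\ast\phi)(x)$, which is well defined because $\phi\in\Test(\Omega^\prime)$ has compact support and is extended by zero to $\R^n$, and because $\delta_a$ is even. Since $(\delta_a)_{a>0}$ is a Dirac sequence with $\delta_a\to\delta$, the standard mollifier argument yields $\delta_a\ast\phi\to\phi$ uniformly on every compact set. Combined with the compact support of $\phi$, this gives $\sigma_a^2(\phi)\to(\phi,\phi)_{\L^2(\Omega^\prime)}=\|\phi\|_{\L^2(\Omega^\prime)}^2$ via dominated convergence.

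Given this variance convergence, the characteristic functions $\varphi_a(t)=\exp(-t^2\sigma_a^2(\phi)/2)$ converge pointwise to $\exp(-t^2\|\phi\|_{\L^2(\Omega^\prime)}^2/2)$, so Lévy's continuity theorem supplies a centered Gaussian limit $H_0(\phi,\cdot)$ with the claimed variance. To upgrade to the covariance formula in~\eqref{Eq: Ex and Cov of limit}, I would apply the Cramér-Wold device: since $\phi\mapsto H_a(\phi,\cdot)$ is linear and the previous step gives $\Var(\alpha H_a(\phi,\cdot)+\beta H_a(\psi,\cdot)) = \sigma_a^2(\alpha\phi+\beta\psi)\to\|\alpha\phi+\beta\psi\|_{\L^2(\Omega^\prime)}^2$ for every $\alpha,\beta\in\R$, the pair $(H_a(\phi,\cdot),H_a(\psi,\cdot))$ converges jointly to a centered bivariate Gaussian whose off-diagonal covariance, recovered by polarization of the quadratic form $\eta\mapsto\|\eta\|_{\L^2(\Omega^\prime)}^2$, equals $(\phi,\psi)_{\L^2(\Omega^\prime)}$.

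The main obstacle I anticipate is making the Dirac-sequence convergence $\delta_a\to\delta$ precise enough to yield $\sigma_a^2(\phi)\to\|\phi\|_{\L^2(\Omega^\prime)}^2$, since the assumption is stated in a distributional flavour rather than with a quantitative rate. However, any reasonable Dirac sequence — nonnegative, with unit mass and concentrating at the origin — provides uniform convergence $\delta_a\ast\phi\to\phi$ on compacta for $\phi\in\Test(\Omega^\prime)$, which together with the compact support of $\phi$ is enough for the limit we require.
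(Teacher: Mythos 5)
Your proof is correct, and while it shares its backbone with the paper's argument, it takes a genuinely different route at the two key junctures. Both proofs start from Lemma~\ref{Lem:Moments Approx Noise} and the Dirac-sequence property to obtain $\Ex\left(H_a(\phi,\cdot)^2\right) \to (\phi,\phi)_{\L^2(\Omega^\prime)}$; the paper simply asserts this limit, whereas you make it explicit via the mollifier argument $\delta_a \ast \phi \to \phi$ uniformly on compacta plus dominated convergence, which is a welcome addition since the paper's definition of a Dirac sequence is stated loosely. From there the paths diverge. For the one-dimensional convergence, the paper invokes the method of moments \cite[Theorem 30.2]{Billingsley1995} together with the fact that a Gaussian law is determined by its first two moments; you instead exploit that each $H_a(\phi,\cdot)$ is \emph{exactly} Gaussian, so its characteristic function is the explicit $\exp\left(-t^2\sigma_a^2(\phi)/2\right)$ and L\'evy's continuity theorem applies directly --- the more economical argument given exact Gaussianity, though the paper's moment route would survive in settings where the pre-limit variables are only approximately Gaussian with convergent moments. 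For the covariance, the paper polarizes after asserting that $H_0(\phi,\cdot)+H_0(\psi,\cdot)$ is the weak limit of $H_a(\phi,\cdot)+H_a(\psi,\cdot)$, using linearity in the test function; strictly speaking, passing from marginal convergence to convergence of the sum tacitly requires joint convergence of the pair, which the paper leaves implicit. Your Cram\'er--Wold step supplies exactly this by checking all linear combinations $\alpha H_a(\phi,\cdot)+\beta H_a(\psi,\cdot) = H_a(\alpha\phi+\beta\psi,\cdot)$, so on this point your argument is slightly more careful than the paper's, at the cost of invoking one more classical theorem.
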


\begin{proof}
First, observe that Lemma~\ref{Lem:Moments Approx Noise} yields
$\lim_{a\to \infty} \Ex (H_a(\phi,\cdot)) = 0$ and
\begin{equation*}
\lim_{a\to \infty} \Ex(H_a(\phi,\cdot) H_a(\psi, \cdot)) = (\phi,\psi)_{\L^2(\Omega^\prime)}
\end{equation*}
for all $\phi,\, \psi \in \Test(\Omega^\prime)$ as $(\delta_a)_{a \in \R_+}$ is a Dirac-sequence.
The method of moments \cite[Theorem 30.2]{Billingsley1995} in combination with the property that the moments of a Gaussian random variable are uniquely determined by its first and second moment \cite[Chapter 5.4]{Papoulis2002} now imply that $H_a(\phi, \cdot)$ converges weakly to a Gaussian random variable $H_0(\phi,\cdot): \Omega \to \R$ for all $\phi \in \Test(\Omega^\prime)$ with expectation zero and variance $(\phi,\phi)_{\L^2(\Omega^\prime)}$.  
For the covariance, observe that 
\begin{align*}
\Ex \left(H_0(\phi,\cdot) H_0(\psi, \cdot)\right) &=  \frac{1}{2}\left(\Ex \left((H_0(\phi,\cdot) + H_0(\psi, \cdot))^2\right) - \Ex \left(H_0(\phi,\cdot)^2\right) - \Ex\left(H_0(\psi,\cdot)^2\right) \right) \\
&= \frac{1}{2} \left( (\phi+\psi,\phi+\psi)_{\L^2(\Omega^\prime)} - (\phi,\phi)_{\L^2(\Omega^\prime)} - (\psi,\psi)_{\L^2(\Omega^\prime)} \right) \\
&= (\phi,\psi)_{\L^2(\Omega^\prime)},
\end{align*}
where we used that $H_0(\phi,\cdot) + H_0(\psi, \cdot)$ is the weak limit of $H_a(\phi,\cdot) + H_a(\psi, \cdot)$ and the same arguments as above prove that the variance of $H_0(\phi,\cdot) + H_0(\psi, \cdot)$ is given by $(\phi+\psi,\phi+\psi)_{\L^2(\Omega^\prime)}$. This completes the proof.
\end{proof}

The last lemma shows that our definition of approximate white noise is sensible in the sense that its weak limit has the same first and second moments as white noise $H$. 

An important characteristic of white noise is the regularity of its realizations. There are several works addressing the regularity of white noise $H$, e.g.,~\cite{Aziznejad2020, Veraar2010, Fageot2017, Fageot2017a}. For instance, \cite{Aziznejad2020} shows that the realizations of white noise $H: \Schwartz(\R^n) \times \Omega \to \R$ belong to the weighted Besov space $B_p^\tau(\R^n,\rho)$ with smoothness $\tau\in\R$, integrability $p\in \N$ and weight exponent $\rho\in\R$, cf.~\cite{Aziznejad2020}, \cite[Chapter 1.2.3]{Triebel2008}, almost surely if and only if $\tau < -\nicefrac{n}{2}$ and $\rho < -\nicefrac{n}{p}$. Similar arguments as in \cite{Aziznejad2020} show that the realizations of white noise $H: \Schwartz(\Omega^\prime) \times \Omega \to \R$ with compact support $\Omega^\prime\subset \R^n$ belong to $B_p^\tau(\R^n,\rho)$ almost surely if and only if $\tau < -\nicefrac{n}{2} + \nicefrac{n}{p}$ and $\rho \in\R$. Consequently, embedding results of weighted Besov spaces imply that the realizations of white noise with compact support belong to the Sobolev space of fractional order $\H^\alpha(\R^n)$ almost surely if $\alpha < 0$, see~\cite[Theorem 6.4.4]{Bergh1976}, \cite{Kuhn2005} and \cite{Meyries2012}. Here, the Sobolev space $\H^\alpha(\R^n)$ of fractional order $\alpha\in\R$ is defined as 
\begin{equation*}
\H^\alpha (\R^n) = \{f \in \Schwartz^\prime(\R^n) \mid \lVert f \rVert_{\H^\alpha(\R^n)} < \infty\}
\end{equation*}
with 
\begin{equation*}
\lVert f \rVert_{\H^\alpha(\R^n)}^2 = \int_{\R^n} (1+ \lVert \xi \rVert_{\R^n}^2)^\alpha \, \lvert \Fourier f(\xi) \rvert^2 \: \d\xi.
\end{equation*}
In contrast, approximate continuous white noise $H_a: \Schwartz(\Omega^\prime) \times \Omega \to \R $  with compact support $\Omega^\prime\subset \R^n$ exhibits a higher regularity. This is shown in the subsequent lemma. 

\begin{lemma}\label{Lem: Smoothness approx noise}
Assume that $H_a: \Schwartz(\R^n) \times \Omega \to \R $ is an approximate continuous white noise with compact support in $\Omega^\prime\subset \R^n$. Then, $H_a(\cdot, \omega) \in \L^2(\R^n)$ for almost all (f.a.a.) $\omega\in\Omega$.
In addition, if, for some $\alpha \in \R$,
\begin{equation*}
\int_{\R^n} (1+ \lVert \xi \rVert_{\R^n}^2)^\alpha \int_{\Omega^\prime} \int_{\Omega^\prime} \delta_a(x-y) \, \e^{-\i (x-y)\xi} \: \d x \, \d y \, \d\xi < \infty,
\end{equation*}
then $H_a(\cdot, \omega)\in \H^\alpha(\R^n)$ f.a.a.\ $\omega\in\Omega$.
\end{lemma}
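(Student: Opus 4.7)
The plan is to work with the realization $H_a(\cdot,\omega)$ directly as the pointwise product $x\mapsto h_a(x,\omega)\ind_{\Omega^\prime}(x)$ and bound the relevant norms in expectation, then conclude via almost sure finiteness. The joint measurability of $h_a$ together with the covariance structure $\Ex(h_a(x,\cdot)h_a(y,\cdot))=\delta_a(x-y)$ and the compactness of $\Omega^\prime$ give exactly enough integrability to apply Fubini/Tonelli throughout.

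For the first statement, I would compute
\begin{equation*}
\Ex\!\left(\int_{\R^n} \lvert h_a(x,\cdot)\rvert^2\ind_{\Omega^\prime}(x)\:\d x\right)
= \int_{\Omega^\prime} \Ex(h_a(x,\cdot)^2)\:\d x = \delta_a(0)\,\lvert \Omega^\prime\rvert < \infty,
\end{equation*}
using Tonelli's theorem and compactness of $\Omega^\prime$. This forces $\int_{\R^n}\lvert h_a(x,\omega)\rvert^2\ind_{\Omega^\prime}(x)\:\d x<\infty$ for almost all $\omega\in\Omega$, which means $H_a(\cdot,\omega)\in \L^2(\R^n)$ f.a.a.\ $\omega\in\Omega$.

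For the second statement, since $H_a(\cdot,\omega)\in \L^2(\R^n)\subseteq \Schwartz^\prime(\R^n)$ a.s., its Fourier transform is the $\L^2$-function
\begin{equation*}
\Fourier H_a(\cdot,\omega)(\xi) = \int_{\Omega^\prime} h_a(x,\omega)\,\e^{-\i x\xi}\:\d x.
\end{equation*}
I would then expand $\lvert\Fourier H_a(\cdot,\omega)(\xi)\rvert^2$ as a double integral over $\Omega^\prime\times\Omega^\prime$ and take expectation. Using Cauchy--Schwarz as in the proof of Lemma~\ref{Lem:Moments Approx Noise} to bound $\Ex(\lvert h_a(x,\cdot)h_a(y,\cdot)\rvert)\leq \delta_a(0)$, the integrand is dominated by $\delta_a(0)\lvert \Omega^\prime\rvert^2<\infty$, so Fubini applies and yields
\begin{equation*}
\Ex\!\left(\lvert\Fourier H_a(\cdot,\omega)(\xi)\rvert^2\right) = \int_{\Omega^\prime}\int_{\Omega^\prime} \delta_a(x-y)\,\e^{-\i(x-y)\xi}\:\d x\,\d y.
\end{equation*}
Another application of Tonelli (the integrand is nonnegative after bringing $(1+\lVert\xi\rVert^2)^\alpha$ inside the expectation) then gives
\begin{equation*}
\Ex\!\left(\lVert H_a(\cdot,\omega)\rVert_{\H^\alpha(\R^n)}^2\right) = \int_{\R^n}(1+\lVert\xi\rVert_{\R^n}^2)^\alpha\int_{\Omega^\prime}\int_{\Omega^\prime}\delta_a(x-y)\,\e^{-\i(x-y)\xi}\:\d x\,\d y\,\d\xi,
\end{equation*}
which is finite by assumption. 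Hence $\lVert H_a(\cdot,\omega)\rVert_{\H^\alpha(\R^n)}^2<\infty$ f.a.a.\ $\omega\in\Omega$.

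The main obstacle I anticipate is the bookkeeping around the iterated Fubini/Tonelli applications: one needs to argue simultaneously that (i) the map $\omega\mapsto \Fourier H_a(\cdot,\omega)(\xi)$ is a measurable random variable for each $\xi$ (this follows from the $\L^2$-bound in the first part, or from direct joint measurability of $(x,\omega)\mapsto h_a(x,\omega)\e^{-\i x\xi}$), and (ii) the non-negative integrand in the $\H^\alpha$-norm allows for interchange with expectation even before finiteness is known. Once these measurability issues are dispatched, the rest reduces to routine estimates using the covariance $\delta_a$ and compactness of $\Omega^\prime$.
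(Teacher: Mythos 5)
Your proposal is correct and follows essentially the same route as the paper's proof: both parts proceed by bounding $\Ex\bigl(\lVert H_a \rVert_{\L^2(\R^n)}^2\bigr)$ and $\Ex\bigl(\lVert H_a \rVert_{\H^\alpha(\R^n)}^2\bigr)$ via Fubini/Tonelli, using the Cauchy--Schwarz bound $\Ex(\lvert h_a(x,\cdot)h_a(y,\cdot)\rvert)\leq \delta_a(0)$ on the compact set $\Omega^\prime\times\Omega^\prime$ to justify the interchanges and identifying $\Ex\bigl(\lvert\Fourier h_a(\xi,\cdot)\rvert^2\bigr)$ with the double integral of $\delta_a(x-y)\,\e^{-\i(x-y)\xi}$, then concluding almost sure finiteness. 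Your explicit attention to the measurability of $\omega\mapsto\Fourier H_a(\cdot,\omega)(\xi)$ and to the nonnegativity needed for Tonelli is a welcome refinement of points the paper leaves implicit, but it does not change the argument.
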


\begin{proof}
For the expectation of the $\L^2$-norm of $H_a$ we obtain
\begin{equation*}
\Ex\left(\lVert H_a \rVert_{\L^2(\R^n)}^2\right) = \int_\Omega \int_{\Omega^\prime} \lvert h_a(x,\omega) \rvert^2 \: \d x \, \d\P(\omega) = \int_{\Omega^\prime} \delta_a(0) \: \d x < \infty
\end{equation*}
by Fubini's Theorem and the compactness of $\Omega^\prime$. As a result, $H_a(\cdot, \omega)\in \H^0(\R^n)$ f.a.a.\ $\omega\in\Omega$. 

We proceed with determining the expectation of the $\H^\alpha$-norm of $H_a$ with $\alpha\in\R$. To this end, observe that the expectation of $\lvert\Fourier h_a(\xi,\cdot) \rvert^2$ for fixed $\xi\in\R^n$ can be rewritten as
\begin{align*}
\Ex\left(\lvert \Fourier h_a(\xi,\cdot) \rvert^2\right) &= \int_\Omega \Fourier h_a(\xi,\omega) \, \overline{\Fourier h_a(\xi,\omega)} \: \d\P(\omega) \\
&= \int_{\Omega^\prime} \int_{\Omega^\prime} \int_\Omega h_a(x,\omega) h_a(y,\omega) \: \d\P(\omega) \, \e^{-\i(x-y)\xi} \: \d x \, \d y \\
&= \int_{\Omega^\prime} \int_{\Omega^\prime} \delta_a(x-y)\, \e^{-\i(x-y)\xi} \: \d x\,\d y 
\end{align*}
due to Fubini's Theorem, which is applicable in this setting as
\begin{align*}
\int_{\Omega^\prime} \int_{\Omega^\prime} \int_\Omega \lvert h_a(x,\omega) h_a(y,\omega) \rvert \: \d\P(\omega) \, \d x \,\d y
&= \int_{\Omega^\prime} \int_{\Omega^\prime} \Ex\left(\lvert h_a(x,\cdot) h_a(y,\cdot) \rvert\right) \: \d x \,\d y \\
&\leq \int_{\Omega^\prime} \int_{\Omega^\prime} \Ex\left(\lvert h_a(x,\cdot) \rvert^2\right)^{\nicefrac{1}{2}} \Ex\left(\lvert h_a(y,\cdot) \rvert^2\right)^{\nicefrac{1}{2}} \: \d x \,\d y  \\
&= \int_{\Omega^\prime} \int_{\Omega^\prime} \delta_a(0) \:  \d x \, \d y < \infty.
\end{align*}
Consequently,
\begin{align*}
\Ex\left(\lVert H_a \rVert_{\H^\alpha(\R^n)}^2\right) &= \int_\Omega \int_{\R^n} (1+ \lVert \xi \rVert_{\R^n}^2)^\alpha \, \lvert\Fourier h_a(\xi,\omega) \rvert^2 \: \d\xi \, \d\P(\omega) \\
&= \int_{\R^n} (1+ \lVert \xi \rVert_{\R^n}^2)^\alpha \int_{\Omega^\prime} \int_{\Omega^\prime} \delta_a(x-y) \, \e^{-\i(x-y)\xi} \: \d x \, \d y \, \d\xi < \infty
\end{align*}
by assumption, which completes the proof. 
\end{proof}

The last result shows that the smoothness of approximate white noise is solely determined by the covariance $\delta_a$. For example, realizations of the stationary Ornstein-Uhlenbeck process with zero mean and $\delta_a(t) = \frac{a}{2} \exp^{-a | t |}$ are almost surely continuous but nowhere differentiable, cf.~\cite[Theorem 1.2]{Doob1942} in combination with the equivalent characterization of the Ornstein-Uhlenbeck process therein and \cite{Payley1933}.

\section{Optimized filter function}\label{sec: optimized filter}

The aim of this section is to derive optimized filter functions for Radon data corrupted by white noise.
To this end, we consider the approximate FBP formula
\begin{equation*}
f_L^\varepsilon(x,y) = \frac{1}{2} \, \Back(\Fourier^{-1}[A_L(\sigma)\Fourier g^\varepsilon(\sigma,\varphi)])(x,y)
\quad \text{for } (x,y)\in\R^2
\end{equation*}
with noisy measurements $g^\varepsilon: \R \times [0,\pi) \to \R$ of noise level $\varepsilon \geq 0$, low-pass filter $A_L: \R \to \R$, given by 
\begin{equation*}
A_L(\sigma) = |\sigma|\,W(\nicefrac{\sigma}{L})
\quad \text{for } \sigma\in \R,
\end{equation*} 
fixed bandwidth $L>0$ and even window $W \in \L^{\infty}(\R)$ with compact support in $[-1,1]$. Observe that the assumptions on $W$ imply $A_{L} \in \L^2(\R)$ with 
\begin{equation*}
\lVert A_{L} \rVert_{\L^2(\R)}^2 = \int_{-L}^L \lvert \sigma \rvert^2 \, \lvert W(\nicefrac{\sigma}{L}) \rvert^2 \: \d \sigma \leq \lVert W \rVert_{\L^\infty(\R)}^2 \int_{-L}^L \lvert \sigma \rvert^2 \: \d \sigma = \frac{2}{3} \, L^3 \, \lVert W \rVert_{\L^\infty(\R)}^2 < \infty.
\end{equation*}
In addition, if $g^\varepsilon \in \L^2(\R\times [0,\pi))$ has compact support, i.e., there exists an $R \geq 0$ with
\begin{align*} 
g^\varepsilon(s,\varphi) = 0
\qquad \forall \: |s|>R, ~ \varphi \in [0,\pi),
\end{align*}
the approximate FBP reconstruction $f_L^\varepsilon \in \L^2(\R^2)$ is defined almost everywhere on $\R^2$ and can be rewritten as 
\begin{equation*}
f_L^\varepsilon = \frac{1}{2} \, \Back(\Fourier^{-1} A_L \ast g^\varepsilon)
\quad \text{almost everywhere on } \R^2,
\end{equation*}
where
\begin{equation*}
(\Fourier^{-1} A_L \ast g^\varepsilon)(s,\varphi) = \int_{\R} \Fourier^{-1} A_L(s-S,\varphi) \, g^\varepsilon(S,\varphi) \: \d S
\quad \text{for } (s,\varphi) \in \R \times [0,\pi).
\end{equation*}
Moreover, the (distributional) Fourier transform of $f_L^\varepsilon$ is given by 
\begin{align}\label{Eq: Fourier trafo FBP}
\Fourier f_L^\varepsilon(\sigma\cos(\varphi), \sigma\sin(\varphi)) = W(\nicefrac{\sigma}{L}) \, \Fourier g^\varepsilon(\sigma,\varphi)
\quad \text{for almost all } \sigma \in \R, ~ \varphi\in [0,\pi).
\end{align}
For details, we refer the reader to~\cite[Section 8.1]{Beckmann2019}.

\subsection{Continuous setting}\label{sec:continuous filter}

In Section~\ref{Sec:Noise}, we argued that a suitable model for the measurement noise in X-ray CT is given by approximate additive Gaussian white noise. Based on this model, we now derive an optimized filter function by minimizing the expectation of the squared $\L^2$-error of the approximate FBP reconstruction $f_L^\varepsilon$. More precisely, as the filter $A_L$ is uniquely determined by its window $W$, we consider the minimization problem
\begin{align*}
W^\ast = \argmin_{\supp(W)\subseteq[-1,1]} \Ex\left(\lVert f_L^\varepsilon - f \rVert_{\L^2(\R^2)}^2\right),
\end{align*}
where the real-valued target function $f\in\L^2(\R^2)$ is assumed to be compactly supported with 
\begin{equation*}
\supp(f)\subseteq B_R(0) \quad \text{for some } R > 0. 
\end{equation*}
Note that this assumption is not very restrictive as objects under investigation are usually of finite extend, which implies that the Radon transform $\Radon f$ of $f$ belongs to $\L^2(\R \times [0,\pi))$ with
\begin{equation*}
\Radon f (s,\varphi) = 0
\quad \forall \: \lvert s \rvert > R, ~ \varphi \in [0,\pi).
\end{equation*}
Moreover, we assume that the Radon transform of $f$ is corrupted by additive approximate white noise $H_a^\varepsilon: \Schwartz(\R^2) \times\Omega \to \R$ of noise level $\varepsilon \geq 0$ with support in $\Omega^\prime = [-R,R] \times [0,\pi)$ resulting in measured data $G^\varepsilon_a: \Schwartz(\R^2) \times \Omega \to \R$ defined as
\begin{equation}\label{Eq: Measured data with additive noise}
G^\varepsilon_a(\phi,\omega) = \int_0^\pi \int_{-R}^R g^\varepsilon_a(s,\varphi,\omega) \, \phi(s,\varphi) \: \d s \, \d \varphi
\quad \text{for } \phi \in \Schwartz(\R^2), ~ \omega \in \Omega
\end{equation}
with $g^\varepsilon_a(s,\varphi,\omega) = \Radon f(s,\varphi) + h^\varepsilon_a(s,\varphi,\omega)$ for all $s \in [-R,R]$, $\varphi\in[0,\pi)$, $\omega \in \Omega$, where $h^\varepsilon_a = \varepsilon \, h_a$.
Observe that we allow the case $\varepsilon=0$, which corresponds to noiseless measured data and degenerate approximate white noise taking the value 0 with probability 1.
Moreover, as $\Radon f$ is $2\pi$-periodic in the angular variable $\varphi$, it would be more appropriate to consider the measured data $G_a^{\varepsilon}$ as a function from $\Schwartz(\R \times \Sphere) \times \Omega$ to $\R$. This, however, does not change our results, but only affects the notation.
Lemma~\ref{Lem: Smoothness approx noise} in combination with $\Radon f \in \L^2(\R \times [0,\pi))$ implies that $G^\varepsilon_a(\cdot,\omega) \in \L^2(\R \times [0,\pi))$ for almost all $\omega \in \Omega$ and, thus, $F_L^\varepsilon: \Schwartz(\R^2) \times \Omega \to \R$ with
\begin{equation}\label{eq: FBP formula with conv}
F_L^\varepsilon(\phi,\omega) = \int_{\R^2} f_L^\varepsilon (x,y,\omega) \, \phi(x,y) \: \d(x,y)
\quad \text{for } \phi \in \Schwartz(\R^2), ~ \omega \in \Omega,
\end{equation}
where 
\begin{equation*}
f_L^\varepsilon(x,y,\omega) = \frac{1}{2} \, \Back(\Fourier^{-1} A_L(s) \ast g^\varepsilon_a(s,\varphi,\omega))(x,y)
\quad \text{for almost all } (x,y) \in \R^2,
\end{equation*}
is well-defined with $F_L^\varepsilon(\cdot,\omega) \in \L^2(\R^2)$ for almost all $\omega \in \Omega$.
Moreover, $F_L^\varepsilon (\phi,\cdot):\Omega\to\R$ is a random variable for fixed $\phi\in\Omega$.
In what follows, we identify $F_L^\varepsilon$ with $f_L^\varepsilon$ as well as $G^\varepsilon_a$ with $g^\varepsilon_a$ to improve readability.

With these preliminaries, we now calculate the expectation of the squared $\L^2(\R^2)$-norm of the error $f_L^\varepsilon - f$.

\begin{lemma}\label{Lem:Expected value error} 
Let $f \in \L^2(\R^2)$ with $\supp(f) \subseteq B_R(0)$ for fixed $R > 0$ and $A_L \in \L^2(\R)$ for $L > 0$. Moreover, assume that the measured data $G^\varepsilon_a: \Schwartz(\R^2) \times \Omega \to \R$ is defined as in~\eqref{Eq: Measured data with additive noise}.
Then, 
\begin{align*}
\Ex\left(\lVert f_L^\varepsilon - f \rVert_{\L^2(\R^2)}^2 \right) = \frac{L^2}{4\pi^2} \int_\R \lvert \sigma \rvert \Big[& (W(\sigma)-1)^2 \int_0^\pi \lvert\Fourier (\Radon f)(L\sigma,\varphi) \rvert^2 \: \d \varphi \\
&+ \pi \, \varepsilon^2 \, W(\sigma)^2 \, \Ex\left(\lvert \Fourier h_a(L\sigma,0,\cdot) \rvert^2\right)\Big] \: \d \sigma
\end{align*} 
with 
\begin{equation*}
\Ex\left(\lvert \Fourier h_a(L\sigma,0,\cdot) \rvert^2\right) = \int_{-R}^R \int_{-R}^R \delta_a(s-\hat{s},0) \, \e^{-i(s-\hat{s}) \, L \sigma} \: \d s \, \d \hat{s}.
\end{equation*}
\end{lemma}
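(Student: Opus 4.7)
The plan is to transfer the whole $\L^2(\R^2)$-norm to the Fourier side via Plancherel, exploit the Fourier slice identity \eqref{Eq: Fourier trafo FBP} to express the error in the convenient polar parametrisation $(\sigma,\varphi)\in\R\times[0,\pi)$, and then push the expectation through the $\sigma$- and $\varphi$-integrals using Fubini.

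First I would observe that since $f\in\L^2(\R^2)$ with $\supp(f)\subseteq B_R(0)$, the Fourier slice theorem gives $\Fourier f(\sigma\cos\varphi,\sigma\sin\varphi)=\Fourier(\Radon f)(\sigma,\varphi)$, while by \eqref{Eq: Fourier trafo FBP} applied to $g^\varepsilon=g_a^\varepsilon=\Radon f+h_a^\varepsilon$,
\begin{equation*}
\Fourier f_L^\varepsilon(\sigma\cos\varphi,\sigma\sin\varphi)=W(\nicefrac{\sigma}{L})\bigl[\Fourier(\Radon f)(\sigma,\varphi)+\Fourier h_a^\varepsilon(\sigma,\varphi)\bigr].
\end{equation*}
Subtracting yields the pointwise (in $\omega$) identity
\begin{equation*}
\Fourier(f_L^\varepsilon-f)(\sigma\cos\varphi,\sigma\sin\varphi)=\bigl(W(\nicefrac{\sigma}{L})-1\bigr)\Fourier(\Radon f)(\sigma,\varphi)+W(\nicefrac{\sigma}{L})\,\Fourier h_a^\varepsilon(\sigma,\varphi).
\end{equation*}

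Next, Plancherel in $\R^2$ together with the polar change of variables with Jacobian $|\sigma|$ (the map $(\sigma,\varphi)\mapsto(\sigma\cos\varphi,\sigma\sin\varphi)$ from $\R\times[0,\pi)$ is a bijection onto $\R^2$ up to a null set) gives
\begin{equation*}
\lVert f_L^\varepsilon-f\rVert_{\L^2(\R^2)}^2=\frac{1}{4\pi^2}\int_0^\pi\!\!\int_\R|\sigma|\bigl\lvert(W(\nicefrac{\sigma}{L})-1)\Fourier(\Radon f)(\sigma,\varphi)+W(\nicefrac{\sigma}{L})\Fourier h_a^\varepsilon(\sigma,\varphi)\bigr\rvert^2\d\sigma\,\d\varphi.
\end{equation*}
Expanding the square yields three terms. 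Taking expectations and using Lemma~\ref{Lem:Moments Approx Noise} (which ensures $\Ex(\Fourier h_a^\varepsilon(\sigma,\varphi))=0$), the cross-term vanishes, leaving the data term and a pure noise variance term. Fubini is justified because $W$ is bounded with compact support, $\Fourier(\Radon f)\in\L^2$, and the noise variance bound from Lemma~\ref{Lem: Smoothness approx noise} controls $\Ex(|\Fourier h_a^\varepsilon(\sigma,\varphi)|^2)$ on the truncation.

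For the noise contribution I would compute, again by Fubini,
\begin{equation*}
\Ex\bigl(\lvert\Fourier h_a(\sigma,\varphi,\cdot)\rvert^2\bigr)=\int_{-R}^R\!\!\int_{-R}^R\Ex\bigl(h_a(s,\varphi,\cdot)\,h_a(\hat s,\varphi,\cdot)\bigr)\,\e^{-\i(s-\hat s)\sigma}\d s\,\d\hat s=\int_{-R}^R\!\!\int_{-R}^R\delta_a(s-\hat s,0)\,\e^{-\i(s-\hat s)\sigma}\d s\,\d\hat s,
\end{equation*}
which is independent of $\varphi$, so the $\varphi$-integration in the noise term just contributes a factor $\pi$. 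Finally, I would carry out the substitution $\sigma\mapsto L\sigma$ in both remaining $\sigma$-integrals, which produces the factor $L^2$ in front and replaces $W(\nicefrac{\sigma}{L})$ by $W(\sigma)$ and $\Fourier(\Radon f)(\sigma,\varphi)$ by $\Fourier(\Radon f)(L\sigma,\varphi)$, giving exactly the asserted formula.

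The main obstacle is the bookkeeping around Fubini and the interchange of expectation with the Fourier-type integrals for the random field $h_a^\varepsilon$; this is conceptually the same argument as in the proof of Lemma~\ref{Lem: Smoothness approx noise}, relying on the Cauchy--Schwarz bound $\Ex(|h_a(x,\cdot)h_a(y,\cdot)|)\le\delta_a(0)$ and the compactness of $\Omega^\prime$ to ensure absolute integrability before swapping orders of integration.
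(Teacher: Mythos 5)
Your proposal is correct and follows essentially the same route as the paper's proof: Plancherel plus polar coordinates, the Fourier slice theorem combined with the identity~\eqref{Eq: Fourier trafo FBP}, expansion of the square with the cross term killed by the zero mean of $\Fourier h_a^\varepsilon$, the covariance computation via Fubini, and the final substitution $\sigma \mapsto L\sigma$. Writing the error first as $(W(\nicefrac{\sigma}{L})-1)\Fourier(\Radon f) + W(\nicefrac{\sigma}{L})\Fourier h_a^\varepsilon$ before squaring is only a cosmetic reorganization of the paper's expansion, and your Fubini justifications match the paper's Cauchy--Schwarz bounds.
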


\begin{proof} 
Let us start with determining the $\L^2(\R^2)$-norm of $f_L^\varepsilon(\cdot,\omega) - f$ with fixed $\omega \in \Omega$. To this end, recall that $f_L^\varepsilon(\cdot,\omega)\in\L^2(\R^2)$ for almost all $\omega\in\Omega$ and $f \in \L^2(\R^2)$ by assumption. As a result, the Rayleigh-Plancherel theorem implies
\begin{align*}
\lVert f_L^\varepsilon(\cdot,\omega) - f \rVert_{\L^2(\R^2)}^2 &= \frac{1}{4\pi^2} \, \lVert \Fourier(f_L^\varepsilon(\cdot,\omega)) - \Fourier f \rVert_{\L^2(\R^2)}^2 \\
&= \frac{1}{4\pi^2} \int_{0}^\pi \int_\R \lvert [\Fourier (f_L^\varepsilon(\cdot,\cdot,\omega)) - \Fourier f](\sigma \cos(\varphi),\sigma\sin(\varphi)) \rvert^2 \, \lvert \sigma \rvert \: \d \sigma \, \d \varphi
\end{align*}
for almost all $\omega \in \Omega$ by transitioning to polar coordinates. Observe that $f \in \L^1(\R^2)$ as $f \in \L^2(\R^2)$ with $\supp(f) \subseteq B_R(0)$ and, as a consequence, the classical Fourier slice theorem
\begin{equation*}
\Fourier(\Radon f)(\sigma,\varphi) = \Fourier f(\sigma \cos(\varphi), \sigma \sin(\varphi))
\quad \forall \: \sigma \in \R, ~ \varphi \in [0,\pi)
\end{equation*}
and~\eqref{Eq: Fourier trafo FBP} yield
\begin{align*}
&\lVert f_L^\varepsilon(\cdot,\omega) - f \rVert_{\L^2(\R^2)}^2 = \frac{1}{4\pi^2} \int_{0}^\pi \int_\R \lvert W(\nicefrac{\sigma}{L}) \, \Fourier g^\varepsilon_a(\sigma,\varphi,\omega) - \Fourier(\Radon f)(\sigma,\varphi) \rvert^2 \, \lvert \sigma \rvert \: \d \sigma \, \d \varphi\\
&\qquad \begin{aligned}= \frac{1}{4\pi^2} \int_{0}^\pi \int_\R \lvert \sigma \rvert \, \Big[& W(\nicefrac{\sigma}{L})^2 \, |\Fourier g^\varepsilon_a(\sigma,\varphi,\omega)|^2 - W(\nicefrac{\sigma}{L}) \, \Fourier(\Radon f)(\sigma,\varphi) \, \overline{\Fourier g^\varepsilon_a(\sigma,\varphi,\omega)} \\
& - W(\nicefrac{\sigma}{L}) \, \overline{\Fourier(\Radon f)(\sigma,\varphi)} \, \Fourier g^\varepsilon_a(\sigma,\varphi,\omega) + \lvert \Fourier(\Radon f)(\sigma,\varphi) \rvert^2\Big] \: \d \sigma \, \d \varphi.\end{aligned}
\end{align*}
By exploiting $g^\varepsilon_a = \Radon f + h^\varepsilon_a$ with $h^\varepsilon_a = \varepsilon \, h_a$, we obtain
\begin{align*}
\lVert f_L^\varepsilon(\cdot,\omega) - f \rVert_{\L^2(\R^2)}^2 = \frac{1}{4\pi^2} \int_{0}^\pi \int_\R \lvert \sigma \rvert \Big[& (W(\nicefrac{\sigma}{L})-1)^2 \, \lvert \Fourier(\Radon f)(\sigma,\varphi) \rvert^2 + W(\nicefrac{\sigma}{L})^2 \, \lvert \Fourier h^\varepsilon_a(\sigma,\varphi,\omega) \rvert^2 \\
&+ \left(W(\nicefrac{\sigma}{L})^2 -  W(\nicefrac{\sigma}{L})\right) \big[ \Fourier(\Radon f)(\sigma,\varphi) \, \overline{\Fourier  h^\varepsilon_a(\sigma,\varphi,\omega)} \\
&+ \overline{\Fourier (\Radon f)(\sigma,\varphi)} \, \Fourier   h^\varepsilon_a(\sigma,\varphi,\omega)\big]\Big] \: \d \sigma \, \d \varphi
\end{align*}
for almost all $\omega\in\Omega$.
Now, for the expectation of $\lVert f_L^\varepsilon - f \rVert_{\L^2(\R^2)}^2: \Omega \to \overline{\R}$ follows that
\begin{align*}
\Ex\left(\lVert f_L^\varepsilon - f \rVert_{\L^2(\R^2)}^2\right) &= \int_\Omega \lVert f_L^\varepsilon(\cdot,\omega) - f \rVert_{\L^2(\R^2)}^2 \: \d \P(\omega) \\
& \begin{aligned}= \frac{1}{4\pi^2} \int_\Omega \int_{0}^\pi \int_\R \lvert \sigma \rvert  \Big[&(W(\nicefrac{\sigma}{L})-1)^2 \, \lvert \Fourier(\Radon f)(\sigma,\varphi) \rvert^2 \\
& + W(\nicefrac{\sigma}{L})^2 \, \lvert \Fourier h^\varepsilon_a(\sigma,\varphi,\omega) \rvert^2 \\
& + \left(W(\nicefrac{\sigma}{L})^2 -  W(\nicefrac{\sigma}{L})\right) \big[\Fourier(\Radon f)(\sigma,\varphi) \, \overline{\Fourier  h^\varepsilon_a(\sigma,\varphi,\omega)} \\
& + \overline{\Fourier(\Radon f)(\sigma,\varphi)} \, \Fourier  h^\varepsilon_a(\sigma,\varphi,\omega)\big]\Big] \: \d \sigma \, \d \varphi \, \d \P(\omega)\end{aligned}\\
& \begin{aligned}= \frac{1}{4\pi^2} \int_{0}^\pi \int_\R \lvert \sigma \rvert \Big[& (W(\nicefrac{\sigma}{L})-1)^2 \, \lvert \Fourier(\Radon f)(\sigma,\varphi) \rvert^2 \\
& + W(\nicefrac{\sigma}{L})^2 \, \Ex(\lvert \Fourier h^\varepsilon_a(\sigma,\varphi,\cdot) \rvert^2) \\
& + \left(W(\nicefrac{\sigma}{L})^2 -  W(\nicefrac{\sigma}{L})\right) \big[\Fourier(\Radon f)(\sigma,\varphi) \, \Ex\left(\overline{\Fourier  h^\varepsilon_a(\sigma,\varphi,\cdot)}\right) \\
& + \overline{\Fourier(\Radon f)(\sigma,\varphi)} \, \Ex(\Fourier  h^\varepsilon_a(\sigma,\varphi,\cdot))\big]\Big] \: \d \sigma \, \d \varphi.\end{aligned}
\end{align*}
The expectation of $\Fourier h^\varepsilon_a(\sigma,\varphi,\cdot)$ is zero as
\begin{equation*}
\Ex(\Fourier h^\varepsilon_a(\sigma,\varphi,\cdot)) = \varepsilon \int_{-R}^R \Ex(h_a(s,\varphi,\cdot)) \, \e^{-\i s\sigma} \: \d s = 0
\quad \text{for all } s \in \R, ~ \varphi \in [0,\pi)
\end{equation*}
due to the definition of $h_a$ and Fubini's Theorem, which also implies that 
\begin{equation*}
\Ex\left(\overline{\Fourier h^\varepsilon_a(\sigma,\varphi,\cdot)}\right) =  \overline{\Ex (\Fourier  h^\varepsilon_a(\sigma,\varphi,\cdot))} = 0
\quad \text{for all } s \in \R, ~ \varphi \in [0,\pi).
\end{equation*} 
Similarly, for the variance of $\Fourier h^\varepsilon_a(\sigma,\varphi,\cdot)$ follows that
\begin{align*}
\Ex(\lvert \Fourier h^\varepsilon_a(\sigma,\varphi,\cdot) \rvert^2) &= \varepsilon^2 \Ex(\lvert \Fourier h_a(\sigma,\varphi,\cdot) \rvert^2)\\
&= \varepsilon^2 \int_{-R}^R \int_{-R}^R \Ex[h_a(s,\varphi,\cdot) h_a(\hat{s},\varphi,\cdot)]\,\e^{-i (s-\hat{s}) \sigma} \: \d s\,\d\hat{s} \\
&= \varepsilon^2 \int_{-R}^R \int_{-R}^R \delta_a(s-\hat{s},0)\,\e^{-i (s-\hat{s}) \sigma}\: \d s\,\d\hat{s}
\end{align*}
for all $\sigma\in\R$ and $\varphi\in [0,\pi)$.
As a result, we finally obtain
\begin{align*}
\Ex\left(\lVert f_L^\varepsilon - f \rVert_{\L^2(\R^2)}^2\right) = \frac{L^2}{4\pi^2} \int_\R \lvert \sigma \rvert \Big[& (W(\sigma)-1)^2 \int_0^\pi \lvert \Fourier(\Radon f)(L\sigma,\varphi) \rvert^2 \: \d \varphi \\
& + \pi \, \varepsilon^2 \, W(\sigma)^2 \int_{-R}^R \int_{-R}^R \delta_a(s-\hat{s},0) \, \e^{-i(s-\hat{s}) \, L \sigma} \: \d s \, \d \hat{s} \Big] \: \d \sigma,
\end{align*}
which completes the proof.
\end{proof}

We now proceed to determining a window function $W^\ast \in \L^\infty(\R)$ with $W^\ast(\sigma) = W^\ast(-\sigma)$ for all $\sigma \in \R$ and $\supp(W^\ast) \subseteq [-1,1]$ that minimizes
\begin{equation*}
\Ex\left(\lVert f_L^\varepsilon - f \rVert_{\L^2(\R^2)}^2\right) = \frac{L^2}{4\pi^2} \int_\R k(\sigma,W(\sigma)) \: \d \sigma 
\end{equation*}
with $k: \R \times \R \to \R$ defined as
\begin{equation}\label{Def: k_sigma}
k(\sigma,w) = \lvert \sigma \rvert (w-1)^2 \int_0^\pi \lvert \Fourier(\Radon f)(L\sigma,\varphi) \rvert^2 \: \d \varphi + \pi \, \varepsilon^2 \, \lvert \sigma \rvert \, w^2 \, \Ex\left(\lvert \Fourier h_a(L\sigma,0,\cdot) \rvert^2\right)
\end{equation}
for $\sigma, w \in \R$. 
To this end, we minimize $k(\sigma,\cdot)$ pointwise for fixed $\sigma \in \R$ and use the monotonicity of the integral to obtain a candidate for $W^\ast$.
Since the window $W$ is assumed to have compact support in $[-1,1]$, we obtain
\begin{equation}
\Ex\left(\lVert f_L^\varepsilon - f \rVert_{\L^2(\R^2)}^2\right) = \frac{L^2}{4\pi^2} \int_{-1}^1 k(\sigma,W(\sigma)) \: \d \sigma + c,
\end{equation}
where $c \in \R$ does not depend on $W$, and, therefore, it suffices to consider $k$ as a function on $[-1,1] \times\R$.
Hence, in the following lemma, we derive a formula for the global minimizer of $k(\sigma,\cdot): \R \to \R$ for $\sigma \in [-1,1]$.

\begin{lemma}\label{Lem:Minimizer k sigma} 
Let $f \in \L^2(\R^2)$ with $\supp(f) \subseteq B_R(0)$ for $R > 0$ and let $H_a^\varepsilon: \Schwartz(\R^2)\times\Omega\to\R$ be approximate white noise with support in $[-R,R] \times [0,\pi)$ and noise level $\varepsilon\geq 0$.
Then, for $\sigma \in [-1,1]$, the global minimizer $w_\sigma^\ast \in \R$ of $k(\sigma,\cdot): \R \to \R$ in~\eqref{Def: k_sigma} is given by
\begin{align*}
w_\sigma^\ast = \begin{dcases} \frac{\int_{0}^\pi \lvert \Fourier(\Radon f)(L\sigma,\varphi) \rvert^2 \: \d \varphi}{\int_{0}^\pi \lvert \Fourier(\Radon f)(L\sigma,\varphi) \rvert^2 \: \d \varphi + \pi \, \varepsilon^2 \, \Ex\left(\lvert \Fourier h_a(L\sigma,0,\cdot) \rvert^2\right)} &\text{for } \sigma \in D \\
1 &\text{for } \sigma \in [-1,1] \setminus D \end{dcases}
\end{align*} 
with 
\begin{equation*}
D = \Bigl\{\sigma\in [-1,1] \setminus \{0\} \Bigm| \varepsilon^2 \, \Ex\left(\lvert \Fourier h_a(L\sigma,0,\cdot) \rvert^2\right)  > 0\Bigr\},
\end{equation*}
which is unique for $\sigma \in D$.
\end{lemma}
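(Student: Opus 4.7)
The plan is to minimize $k(\sigma, \cdot)$ pointwise in $w$, since for every fixed $\sigma \in [-1,1]$ the map $w \mapsto k(\sigma,w)$ is a (nonnegative) quadratic polynomial and the monotonicity of the integral then guarantees that the pointwise minimizer is a minimizer of the integral functional as well. To keep notation compact, I would introduce the abbreviations
\begin{equation*}
\alpha(\sigma) \;=\; \lvert\sigma\rvert \int_{0}^{\pi} \lvert \Fourier(\Radon f)(L\sigma,\varphi) \rvert^2 \: \d\varphi,
\quad
\beta(\sigma) \;=\; \pi\,\varepsilon^2\,\lvert\sigma\rvert\,\Ex\!\left(\lvert \Fourier h_a(L\sigma,0,\cdot) \rvert^2\right),
\end{equation*}
both of which are nonnegative. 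Then~\eqref{Def: k_sigma} reads simply $k(\sigma,w) = \alpha(\sigma)(w-1)^2 + \beta(\sigma)\,w^2$.

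Next, I would compute the first and second derivatives
\begin{equation*}
\partial_w k(\sigma,w) = 2\bigl[(\alpha(\sigma)+\beta(\sigma))\,w - \alpha(\sigma)\bigr],
\qquad
\partial_w^2 k(\sigma,w) = 2\,(\alpha(\sigma)+\beta(\sigma)) \geq 0.
\end{equation*}
Hence $k(\sigma,\cdot)$ is convex and, whenever $\alpha(\sigma)+\beta(\sigma) > 0$, strictly convex, so that setting the first derivative to zero yields the unique global minimizer
\begin{equation*}
w_\sigma^\ast = \frac{\alpha(\sigma)}{\alpha(\sigma)+\beta(\sigma)}.
\end{equation*}

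The final step is a case distinction according to whether $\sigma$ lies in $D$. For $\sigma \in D$, by definition $\lvert\sigma\rvert > 0$ and $\varepsilon^2\,\Ex(\lvert \Fourier h_a(L\sigma,0,\cdot) \rvert^2) > 0$, so $\beta(\sigma) > 0$; hence $\alpha(\sigma)+\beta(\sigma) > 0$, the minimizer is unique, and canceling the common factor $\lvert\sigma\rvert$ in numerator and denominator of the formula above reproduces the quoted expression for $w_\sigma^\ast$. For $\sigma \in [-1,1]\setminus D$, either $\sigma = 0$, in which case both $\alpha(\sigma)$ and $\beta(\sigma)$ vanish and $k(\sigma,\cdot) \equiv 0$ so any value (in particular $w_\sigma^\ast = 1$) is a minimizer; or $\sigma \neq 0$ with $\beta(\sigma) = 0$, in which case $k(\sigma,w) = \alpha(\sigma)(w-1)^2$ is plainly minimized at $w = 1$ (uniquely if $\alpha(\sigma) > 0$, trivially otherwise).

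There is no substantial obstacle here: the result is a pointwise one-dimensional quadratic minimization. The only point requiring care is the bookkeeping at the boundary, namely the case $\beta(\sigma) = 0$ (where strict convexity may fail so that uniqueness is not claimed) and the point $\sigma = 0$ where the $\lvert\sigma\rvert$-prefactor annihilates $k$ altogether; making $w_\sigma^\ast = 1$ the canonical choice there keeps the optimal window continuous at $\sigma = 0$ and compatible with the requirement $\supp(W^\ast) \subseteq [-1,1]$.
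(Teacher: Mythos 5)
Your proposal is correct and follows essentially the same route as the paper's proof: a pointwise one-dimensional quadratic minimization via first- and second-order conditions for $\sigma \in D$, together with the same case distinction for $\sigma = 0$ (where $k(\sigma,\cdot) \equiv 0$) and for $\sigma \in [-1,1] \setminus (D \cup \{0\})$ (where $k(\sigma,w) = \alpha(\sigma)(w-1)^2$ is minimized at $w = 1$). The only cosmetic difference is your compact $\alpha(\sigma)$, $\beta(\sigma)$ notation and the explicit remark about cancelling the common factor $\lvert\sigma\rvert$, which the paper leaves implicit.
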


\begin{proof}
The proof can be divided into three cases:

\smallskip

\noindent
\textit{Case $\sigma\in D$:}
The partial derivative of $k$ w.r.t.\ the second variable $w$ is given by
\begin{equation*}
\frac{\partial k}{\partial w} (\sigma,w) = 2 \, \lvert \sigma \rvert \, (w-1) \int_0^\pi \lvert \Fourier(\Radon f)(L\sigma,\varphi) \rvert^2 \: \d \varphi + 2\pi \, \varepsilon^2 \, \lvert \sigma \rvert \, w \, \Ex\left(\lvert \Fourier h_a(L\sigma,0,\cdot) \rvert^2\right)
\quad \text{for } w \in \R 
\end{equation*}
and, thus, the necessary condition of a minimum $w_\sigma^\ast$ of $k(\sigma,\cdot)$ reads
\begin{equation*}
\frac{\partial k}{\partial w} (\sigma,w_\sigma^\ast) = 0
\quad \iff \quad
w_\sigma^\ast = \frac{\int_{0}^\pi \lvert \Fourier(\Radon f)(L\sigma,\varphi) \rvert^2 \: \d \varphi}{\int_{0}^\pi \lvert \Fourier(\Radon f)(L\sigma,\varphi) \rvert^2 \: \d \varphi + \pi \, \varepsilon^2 \, \Ex\left(\lvert \Fourier h_a(L\sigma,0,\cdot) \rvert^2\right)}.
\end{equation*}
Moreover, the second partial derivative
\begin{equation*}
\frac{\partial^2 k}{\partial w^2} (\sigma,w) = 2 \, \lvert \sigma \rvert \int_0^\pi \lvert \Fourier(\Radon f)(L\sigma,\varphi) \rvert^2 \: \d \varphi + 2\pi \, \varepsilon^2 \, \lvert \sigma \rvert \, \Ex\left(\lvert \Fourier h_a(L\sigma,0,\cdot) \rvert^2\right)
\quad \text{for } w \in \R
\end{equation*}
is strictly larger than zero for all $\sigma \in D$ so that $k(\sigma,\cdot)$ is strictly convex and $w_\sigma^\ast$ is the unique global minimizer of $k(\sigma,\cdot)$ on $\R$.

\smallskip

\noindent
\textit{Case $\sigma = 0$:}
In this case, we have $k(\sigma,w) = 0$ for all $w \in \R$ and, consequently, $w_\sigma^\ast = 1$ is a global minimizer of $k(\sigma,\cdot)$ on $\R$.
\smallskip

\noindent
\textit{Case $\sigma \in [-1,1] \setminus (D \cup \{ 0 \} )$:}
In this setting, $k(\sigma,w)$ reads
\begin{equation*}
k(\sigma,w) = \lvert \sigma \rvert \, (w-1)^2 \int_0^\pi \lvert \Fourier(\Radon f)(L\sigma,\varphi) \rvert^2 \: \d \varphi
\quad \text{for } w \in \R 
\end{equation*}
with $k(\sigma,w) \geq 0$ for all $w \in \R$ and $k(\sigma,1) = 0$. Therefore, $w_\sigma^\ast = 1$ is a global minimizer.
\end{proof} 

The preceding lemma, in combination with the monotonicity property of the integral, shows that a candidate for an optimal window function $W^\ast: \R \to \R$ with $\supp(W^\ast) \subseteq [-1,1]$ is given by 
\begin{align}\label{Eq:Candidate W0}
W^\ast(\sigma) = \begin{dcases} \frac{\int_{0}^\pi \lvert \Fourier(\Radon f)(L\sigma,\varphi) \rvert^2 \: \d \varphi}{\int_{0}^\pi \lvert \Fourier(\Radon f)(L\sigma,\varphi) \rvert^2 \: \d \varphi + \pi \, \varepsilon^2 \, \Ex\left(\lvert \Fourier h_a(L\sigma,0,\cdot) \rvert^2\right)} &\text{for } \sigma \in D \\
1 &\text{for } \sigma \in [-1,1] \setminus D \\
0 &\text{else} \end{dcases}
\end{align} 
with 
\begin{equation*}
D = \Bigl\{ \sigma\in [-1,1] \setminus \{0\} \Bigm| \varepsilon^2 \,
\Ex\left(\lvert \Fourier h_a(L\sigma,0,\cdot) \rvert^2\right) > 0\Bigr\}.
\end{equation*}
It remains to prove that the candidate $W^\ast$ is indeed a window, i.e., $W^\ast \in \L^\infty(\R)$ is even.

\begin{lemma}\label{Lem:Prop W} 
Assume that $f \in \L^2(\R^2)$ is real-valued with $\supp(f) \subseteq B_R(0)$ for $R > 0$ and let $H_a^\varepsilon: \Schwartz(\R^2)\times\Omega\to\R$ be approximate white noise with support in $[-R,R] \times [0,\pi)$ and noise level $\varepsilon\geq 0$.
Then, $W^\ast:\R\to\R$ in~\eqref{Eq:Candidate W0} is well-defined, even and in $\L^\infty(\R)$.
\end{lemma}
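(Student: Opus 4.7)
The plan is to verify separately the three required properties — well-definedness, evenness, and membership in $\L^\infty(\R)$ — for the candidate $W^\ast$ in~\eqref{Eq:Candidate W0}. All three should follow from elementary observations about the defining ratio, so I do not expect any serious obstacle; the only mildly subtle point is the evenness, which requires unwinding both the Radon-data term and the noise-covariance integral.

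For well-definedness, the only risk is division by zero on $D$. But by the very definition of $D$, one has $\varepsilon^2 \, \Ex(\lvert \Fourier h_a(L\sigma,0,\cdot) \rvert^2) > 0$ for every $\sigma \in D$, whereas the integral $\int_0^\pi \lvert \Fourier(\Radon f)(L\sigma,\varphi) \rvert^2 \: \d\varphi$ is non-negative. Hence the denominator is strictly positive on $D$ and the ratio is well-defined; on $[-1,1]\setminus D$ and on $\R\setminus[-1,1]$ the values $1$ and $0$ are explicit. For essential boundedness, I observe that on $D$ the denominator equals the numerator plus a strictly positive term, so the ratio lies in $[0,1]$. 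Combined with the explicit values $1$ and $0$ on the complementary regions, this yields $\lVert W^\ast \rVert_{\L^\infty(\R)} \leq 1$.

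For evenness, I need to verify that $D$, the numerator, and the noise-variance term are all invariant under $\sigma \mapsto -\sigma$. Since $f$ is real-valued, so is $\Radon f$, and hence the univariate Fourier transform satisfies the conjugate symmetry $\Fourier(\Radon f)(-L\sigma,\varphi) = \overline{\Fourier(\Radon f)(L\sigma,\varphi)}$; therefore $\lvert \Fourier(\Radon f)(L\sigma,\varphi) \rvert^2$ is even in $\sigma$. For the noise term, Lemma~\ref{Lem:Expected value error} furnishes the representation
\begin{equation*}
\Ex\left(\lvert \Fourier h_a(L\sigma,0,\cdot) \rvert^2\right) = \int_{-R}^R \int_{-R}^R \delta_a(s-\hat{s},0) \, \e^{-\i (s-\hat{s}) L \sigma} \: \d s \, \d \hat{s}.
\end{equation*}
Since $\delta_a$ is even by Definition~\ref{Def:Approx white noise}, swapping $s$ and $\hat{s}$ in the double integral shows that this expression equals its own complex conjugate and thus defines a real-valued, even function of $\sigma$. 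It follows that $D = -D$, the piecewise definition of $W^\ast$ is symmetric, and $W^\ast(-\sigma) = W^\ast(\sigma)$ for all $\sigma \in \R$, completing the verification.
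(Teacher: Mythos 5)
Your treatment of well-definedness has a gap: you rule out division by zero but never verify that the two terms in the defining ratio are \emph{finite}, and without this the expression on $D$ could a priori be of the indeterminate form $\infty/\infty$. This is precisely the point to which the paper devotes the first half of its proof: it bounds the numerator via the Fourier slice theorem and the boundedness of $\Fourier f$ (which rests on $f \in \L^1(\R^2)$, a consequence of $f \in \L^2(\R^2)$ with compact support), namely $\int_0^\pi \lvert \Fourier(\Radon f)(L\sigma,\varphi)\rvert^2 \, \d\varphi \leq \pi \, \lVert \Fourier f \rVert_\infty^2 < \infty$, and it bounds the noise term via Fubini's theorem and the Cauchy--Schwarz inequality by $4R^2\,\delta_a(0) < \infty$. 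Neither bound is difficult, but both are needed before your observation ``denominator equals numerator plus a strictly positive term, hence the ratio lies in $[0,1]$'' makes sense; as written, your $\L^\infty$ bound also tacitly assumes this finiteness. Note moreover that even the pointwise evaluation of $\Fourier(\Radon f)(L\sigma,\varphi)$ for \emph{every} $\sigma$ uses $\Radon f(\cdot,\varphi) \in \L^1$, which again comes from the compact support assumption --- so that hypothesis is doing real work you did not invoke.

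The remainder of your argument is correct, and for evenness you take a mildly different route from the paper. You work at the level of second moments, using the covariance representation of $\Ex\left(\lvert \Fourier h_a(L\sigma,0,\cdot)\rvert^2\right)$ from Lemma~\ref{Lem:Expected value error} together with the evenness of $\delta_a$ and the substitution $s \leftrightarrow \hat{s}$, whereas the paper applies the conjugate-symmetry identity $\lvert \Fourier g(\sigma,\varphi)\rvert^2 = \lvert \Fourier g(-\sigma,\varphi)\rvert^2$ for real-valued $g$ both to $\Radon f$ and pathwise to the realizations of $h_a$, then passes to expectations. Your variant is legitimate and arguably cleaner for the noise term, since it avoids any appeal to properties of individual realizations and simultaneously yields that the noise-variance term is real; your treatment of the Radon term and the conclusion $D = -D$, which ensures the piecewise cases of $W^\ast$ match under $\sigma \mapsto -\sigma$, coincide with the paper's.
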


\begin{proof}
First, observe that
\begin{align*}
\int_0^\pi \lvert \Fourier(\Radon f)(L\sigma,\varphi) \rvert^2 \: \d \varphi = \int_0^\pi \lvert \Fourier f(L\sigma\cos(\varphi),L\sigma\sin(\varphi)) \rvert^2 \: \d \varphi \leq \pi \norm{\Fourier f}_\infty^2 < \infty
\end{align*}
and
\begin{align*}
\Ex\left(\lvert \Fourier h_a(L\sigma,0,\cdot) \rvert^2\right) &\leq \int_{-R}^R \int_{-R}^R \Ex\left(\lvert h_a(s,0,\cdot) \, h_a(\hat{s},0,\cdot) \rvert\right) \: \d s \, \d \hat{s} \\
&\leq \int_{-R}^R \int_{-R}^R \Ex\left(\lvert h_a(s,0,\cdot) \rvert^2 \right)^{\nicefrac{1}{2}} \Ex\left(\lvert h_a(\hat{s},0,\cdot) \rvert^2\right)^{\nicefrac{1}{2}} \: \d s \,\d \hat{s} = 4 R^2 \,\delta_a(0) < \infty
\end{align*}
for all $\sigma\in\R$ due to the Fourier slice theorem, the Riemann-Lebesgue lemma, Fubini's theorem and Cauchy Schwarz's inequality.
In addition, 
\begin{equation*}
\lvert W^\ast(\sigma) \rvert = \frac{\int_{0}^\pi \lvert \Fourier(\Radon f)(L\sigma,\varphi) \rvert^2 \: \d \varphi}{\int_{0}^\pi \lvert \Fourier(\Radon f)(L\sigma,\varphi) \rvert^2 \: \d \varphi + \pi \, \varepsilon^2 \, \Ex\left(\lvert \Fourier h_a(L\sigma,0,\cdot) \rvert^2\right)}\leq 1
\quad \text{for } \sigma\in D
\end{equation*}
and $W^\ast(\sigma) = 1$ for $\sigma \in [-1,1] \setminus D$. Consequently, $W^\ast$ is well-defined for all $\sigma\in\R$ and satisfies $W^\ast \in \L^\infty(\R)$. 
In order to prove that $W^\ast$ is an even function, note that 
\begin{equation*}
\lvert \Fourier g(\sigma,\varphi) \rvert^2 = \Big\lvert \int_\R g(s,\varphi)\,\e^{-\i s\sigma} \: \d s\Big\rvert^2 = \Big\lvert \int_\R \overline{g(s,\varphi)\,\e^{-\i s (-\sigma)}} \: \d s\Big\rvert^2 = \lvert \Fourier g(-\sigma,\varphi)  \rvert^2 
\end{equation*}
for all real-valued $g\in\L^2(\R\times [0,\pi))$ and all $\varphi\in [0,\pi)$. This implies that
\begin{equation*}
\lvert\Fourier (\Radon f)(L\sigma,\varphi) \rvert^2\, = \lvert \Fourier (\Radon f)(-L\sigma,\varphi) \rvert^2
\quad \text{and} \quad
\Ex\left(\lvert \Fourier h_a(L\sigma,0,\cdot) \rvert^2\right) = \Ex\left(\lvert \Fourier h_a(-L\sigma,0,\cdot) \rvert^2\right)
\end{equation*}
for all $\sigma \in \R$ and $\varphi \in [0,\pi)$. Consequently, we have $\sigma \in D$ if and only if $-\sigma \in D$ and $W^\ast(\sigma) = W^\ast(-\sigma)$ for all $\sigma \in \R$.
\end{proof}

The above Lemmata show that $W^\ast$ is an optimized window function for the approximate FBP reconstruction in the sense that it minimizes the expectation of the squared $\L^2(\R^2)$-norm of $f_L^\varepsilon - f$ for window functions with compact support in $[-1,1]$. This is summarized in the following theorem.

\begin{theorem}[Optimized filter with compact support]\label{theo:optimized_filter_continuous}
Assume that $f \in \L^2(\R^2)$ is real-valued with $\supp(f) \subseteq B_R(0)$ for fixed $R > 0$. In addition, assume that the Radon transform of $f$ is corrupted by additive approximate white noise $H_a^\varepsilon: \Schwartz(\R^2)\times\Omega\to\R$ with support in $[-R,R]\times [0,\pi)$ and noise level $\varepsilon\geq 0$, i.e., the measured data is given by $G^\varepsilon_a: \Schwartz(\R^2) \times \Omega \to \R $ with
\begin{equation*}
G^\varepsilon_a(\phi,\omega) = \int_0^\pi \int_{-R}^R g^\varepsilon_a(s,\varphi,\omega) \, \phi(s,\varphi) \: \d s \, \d\varphi
\quad \text{for } \phi \in \Schwartz(\R^2), ~ \omega \in \Omega,
\end{equation*}
where $g^\varepsilon_a(s,\varphi,\omega) = \Radon f(s,\varphi) + \varepsilon \, h_a(s,\varphi,\omega)$ for all $s \in [-R,R]$, $\varphi \in [0,\pi)$, $\omega \in \Omega$. 
Then, $A_L^\ast: \R \to \R$, defined by
\begin{align*}
A_L^\ast(\sigma) = \begin{dcases} \frac{\lvert \sigma \rvert \int_{0}^\pi \lvert \Fourier(\Radon f)(\sigma,\varphi) \rvert^2 \: \d \varphi}{\int_{0}^\pi \lvert \Fourier(\Radon f)(\sigma,\varphi) \rvert^2 \: \d \varphi + \pi \, \varepsilon^2 \, \Ex\left(\lvert \Fourier h_a(\sigma,0,\cdot) \rvert^2\right)} & \text{for } \sigma \in D_L \\
\lvert \sigma \rvert & \text{for } \sigma \in [-L,L] \setminus D_L \\
0 & \text{else} \end{dcases}
\end{align*} 
with
\begin{equation*}
D_L = \Bigl\{\sigma\in [-L,L] \setminus \{0\} \Bigm| \varepsilon^2 \,
\Ex\left(\lvert \Fourier h_a(\sigma,0,\cdot) \rvert^2\right) > 0\Bigr\}
\end{equation*} 
and 
\begin{equation*}
\Ex\left(\lvert \Fourier h_a(\sigma,0,\cdot) \rvert^2\right) = \int_{-R}^R \int_{-R}^R \delta_a(s-\hat{s},0) \, \e^{-\i(s-\hat{s}) \, \sigma} \: \d s \, \d \hat{s},
\end{equation*}
is the optimized filter function with compact support that minimizes
\begin{align*}
\Ex\left(\lVert f_L^\varepsilon - f \rVert_{\L^2(\R^2)}^2\right) 
\end{align*}
for filter functions $A_{L}(\cdot) = \lvert\,\cdot\,\rvert\, W(\nicefrac{\cdot}{L})$ with $\supp(A_L) \subseteq [-L,L]$ and fixed $L > 0$.
\end{theorem}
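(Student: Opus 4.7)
The plan is to assemble the three preceding lemmata. First I would invoke Lemma \ref{Lem:Expected value error} to rewrite
\begin{equation*}
\Ex\left(\lVert f_L^\varepsilon - f \rVert_{\L^2(\R^2)}^2\right) = \frac{L^2}{4\pi^2}\int_{\R} k(\sigma,W(\sigma)) \: \d\sigma,
\end{equation*}
with $k$ as in \eqref{Def: k_sigma}. Since every admissible window satisfies $\supp(W)\subseteq[-1,1]$, the integrand equals $k(\sigma,0)$ for $|\sigma|>1$, which is $W$-independent and therefore irrelevant to the minimization. Hence it suffices to minimize $\int_{-1}^{1} k(\sigma,W(\sigma)) \: \d\sigma$ over the admissible class of windows.

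Next I would pass to pointwise minimization. Lemma \ref{Lem:Minimizer k sigma} provides, for every $\sigma\in[-1,1]$, the global minimizer $w_\sigma^\ast$ of $k(\sigma,\cdot)$ on $\R$, which is furthermore unique on the set $D$. Because $k(\sigma,w)\geq 0$ for all $\sigma,w\in\R$, monotonicity of the Lebesgue integral implies that any admissible window $W$ with $W(\sigma)=w_\sigma^\ast$ almost everywhere on $[-1,1]$ minimizes the entire expected $\L^2$-error. The candidate $W^\ast$ defined by \eqref{Eq:Candidate W0} is exactly this pointwise choice extended by zero outside $[-1,1]$, and Lemma \ref{Lem:Prop W} confirms that $W^\ast$ is well-defined, even and in $\L^\infty(\R)$, hence admissible.

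Finally, I would translate the optimal window into the corresponding filter via $A_L^\ast(\sigma) = |\sigma|\,W^\ast(\sigma/L)$. The rescaling $\sigma\mapsto \sigma/L$ sends the set $D$ to $D_L$ and the three-case description of $W^\ast$ on $[-1,1]$ to the three-case description of $A_L^\ast$ on $[-L,L]$; the prefactor $|\sigma|$ is absorbed into the numerator to produce exactly the closed form stated in the theorem, with $A_L^\ast$ vanishing outside $[-L,L]$ as required.

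I do not foresee a substantial obstacle: the construction is essentially a pointwise-to-integral minimization argument, which is legitimate here thanks to the nonnegativity of $k$ and the monotonicity of the integral, while all structural properties needed for admissibility of $W^\ast$ are already supplied by Lemma \ref{Lem:Prop W}. The only bookkeeping step is the rescaling between $W$ and $A_L$, which is routine.
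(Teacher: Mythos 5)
Your proposal is correct and follows essentially the same route as the paper, whose proof consists precisely of combining Lemmata~\ref{Lem:Expected value error}, \ref{Lem:Minimizer k sigma} and \ref{Lem:Prop W} with the monotonicity of the integral and the identity $A_L^\ast(\cdot) = \lvert\,\cdot\,\rvert\, W^\ast(\nicefrac{\cdot}{L})$. Your write-up merely makes the pointwise-to-integral argument and the rescaling between $D$ and $D_L$ explicit, which the paper leaves implicit.
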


\begin{proof}
The statement is a direct consequence of Lemmata~\ref{Lem:Expected value error}, \ref{Lem:Minimizer k sigma}, \ref{Lem:Prop W}, the monotonicity property of the integral and the identity $A_L^\ast(\cdot) = \lvert\,\cdot\,\rvert \, W^\ast(\nicefrac{\cdot}{L})$.
\end{proof}

Observe that the optimized filter function depends on the covariance $\delta_a$ of $h_a$, the noise level $\varepsilon$ and the true Radon transform $\Radon f$ of the target function $f$. The latter, however, is typically not known in applications, which makes the filter function difficult to use. We will address this in Chapter~\ref{sec:Numerik}, where we present a possible adaptation of the above filter to the case of unknown true data.

In the case of noise-free measurement data, i.e., $\varepsilon=0$, the optimized filter function reduces to 
\begin{align*}
A_L^\ast(\sigma) = \begin{dcases} \lvert \sigma \rvert & \text{for } \sigma \in [-L,L] \\
0 & \text{else.} \end{dcases}
\end{align*}
Consequently, in this case, the optimized filter function is equal to the Ram-Lak filter. Similar observations were made in~\cite{Beckmann2019, Beckmann2020}.

To close this section, we consider the example of the above mentioned stationary Ornstein-Uhlenbeck process with zero mean and $\delta_a(t) = \frac{a}{2}\exp^{-a\,\lvert t \rvert}$ with $t\in\R$, which we extend to $\R \times [0,\pi)$ by setting
\begin{equation*}
\delta_a(t,\varphi) = \frac{a^2}{4}\exp^{-a\,\lvert t \rvert} \exp^{-a\,\lvert \varphi \rvert}
\quad \text{for } t \in \R, ~ \varphi \in [0,\pi).
\end{equation*}
Standard computations then give
\begin{equation*}
\Ex\left(\lvert \Fourier h_a(\sigma,0,\cdot) \rvert^2\right) = a^2 \left( \frac{a R}{a^2+\sigma^2} + \frac{\frac{1}{2}\left(\e^{-2aR}\cos(2R\sigma)-1\right)\left(a^2-\sigma^2\right)-a\sigma\e^{-2aR}\sin(2R\sigma)}{a^4+2a^2\sigma^2+\sigma^2}  \right)
\end{equation*}
for all $\sigma\in\R$ and $a>0$. Hence, in the limit $a \to \infty$
we obtain
\begin{equation*}
\Ex\left(\lvert \Fourier h_a(\sigma,0,\cdot) \rvert^2\right) \xrightarrow{a \to \infty} \infty
\quad \text{for all } \sigma \in \R
\end{equation*}
and the corresponding optimized filter function $A_L^\ast(\sigma)$ converges to 0 for all $\sigma\in\R$ and $\varepsilon>0$.
This behaviour is reasonable as $H_a$ converges weakly to white noise for $a \to \infty$, cf.~Lemma~\ref{Lem:Convergence of approximate white noise}, with $\Ex(h_a(x,\cdot)^2) = \delta_a(0) \xrightarrow{a \to \infty} \infty$.
In this sense, white noise has an infinite variance and corresponds to a process requiring an infinite amount of energy as already highlighted in Section~\ref{Sec:Noise}. To compensate for this, the filter function must suppress all frequencies and, thus, $A_L^\ast \equiv 0$.
The same behaviour can be observed for a larger variety of covariance functions. 
To this end, consider fixed $\varepsilon^2>0$ and assume that the covariance $\delta_a: \R^2 \to \R$ of the approximate white noise is of the form
\begin{equation*}
\delta_a(s,\varphi) = a^2 \, \phi(a s) \, \phi(a \varphi)
\quad \text{for } s, \varphi \in \R,
\end{equation*}
where $\phi \in \L^1(\R)$ with $\phi(x) \geq 0$ for all $x \in \R$, $\phi(0) > 0$ and $\int_\R \phi(x) \: \d x = 1$. Then, $\delta_a$ is a Dirac sequence and 
\begin{align*}
\Ex\left(\lvert \Fourier h_a(\sigma,0,\cdot) \rvert^2\right) &= a^2 \, \phi(0) \int_{-R}^R \int_{-R}^R \phi(a(s-\hat{s})) \, \e^{-\i(s-\hat{s}) \, \sigma} \: \d s \, \d \hat{s}
\quad \text{for all } \sigma \in \R.
\end{align*}
Observe that 
\begin{align*}
\int_{-R}^R \int_{-R}^R a \, \phi(a(s-\hat{s})) \, \e^{-\i(s-\hat{s}) \, \sigma} \: \d s \, \d \hat{s} \xrightarrow{a\to\infty} 2R > 0
\end{align*}
uniformly for $\sigma\in[-L,L]$, which follows from Lebesgue's dominated convergence theorem, the uniform convergence of the complex exponential function on any bounded interval and the properties of $\phi$. As a result, there exists an $a_0 > 0$ with $\Ex\left(\lvert \Fourier h_a(\sigma,0,\cdot) \rvert^2\right) > 0$ for all $\sigma \in [-L,L]$ and all $a \geq a_0$. Consequently, 
\begin{align*}
A_L^\ast(\sigma) = \frac{\lvert \sigma \rvert \int_{0}^\pi \lvert \Fourier(\Radon f)(\sigma,\varphi) \rvert^2 \: \d \varphi}{\int_{0}^\pi \lvert \Fourier(\Radon f)(\sigma,\varphi) \rvert^2 \: \d \varphi + \pi \, \varepsilon^2 \, \Ex\left(\lvert \Fourier h_a(\sigma,0,\cdot) \rvert^2\right)}
\quad \text{for } \sigma \in [-L,L], ~ a \geq a_0,
\end{align*} 
and in the limit $a\to\infty$, we obtain
\begin{align*}
A_L^\ast(\sigma) \xrightarrow{a\to\infty} 0
\quad \text{for all } \sigma \in \R,
\end{align*} 
which follows from
\begin{align*}
\Ex\left(\lvert \Fourier h_a(\sigma,0,\cdot) \rvert^2\right) &= a \, \phi(0) \int_{-R}^R \int_{-R}^R a \, \phi(a(s-\hat{s})) \, \e^{-\i(s-\hat{s}) \, \sigma} \: \d s \, \d \hat{s} \xrightarrow{a \to \infty} \infty.
\end{align*}

\subsection{Discrete setting}\label{sec: discrete filter}

The optimized filter function derived in the last section is based on the assumption that the measured data is known for every line in the plane. In real-world applications, however, this is not the case, since only finitely many measurements are available. 
Consequently, the approximate FBP formula and with this the optimized filter function are not applicable in these settings and need to be adapted to discrete measurement data.
For simplicity, we assume that the data is measured using a parallel scanning geometry.
To be more precise, the discrete measured data $g_D^\varepsilon:\{ -M,\dots , M \}\times \{ 0,\dots , N_\varphi-1 \} \times \Omega\to\R$ with $M,\,N_\varphi\in\N$ is given by 
\begin{equation*}
g_D^\varepsilon(i,j,\omega) = \Radon f(s_i,\varphi_j) + h_a^\varepsilon(s_i,\varphi_j,\omega)
\quad \text{for } -M\leq i\leq M,\, 0\leq j\leq N_\varphi-1,\, \omega\in\Omega 
\end{equation*}
with $s_i = i h$ for $h>0$, $\varphi_j = j\, \nicefrac{\pi}{N_\varphi}$ and $g_a^\varepsilon:[-R,R]\times [0,\pi)\times\Omega\to\R$ as in \eqref{Eq: Measured data with additive noise}, where the radial discretization is chosen to cover the entire interval, i.e., $M h  \geq R$.
Moreover, we assume that $h_a$ is sufficiently close to white noise such that $g_D^\varepsilon(i,j,\cdot):\Omega\to\R$ and $g_D^\varepsilon(k,l,\cdot):\Omega\to\R$ appear to be uncorrelated if $i\neq k$ or $j\neq l$.
More precisely, the covariance of $h_a$ has to satisfy
\begin{equation}\label{eq: discrete white noise uncorrelated}
\Ex\left(h_a(s_i,\varphi_j,\cdot) h_a(s_k,\varphi_l,\cdot)\right) = \delta_a(s_i-s_k, \varphi_j -\varphi_l) = 0 \quad \text{if } i\neq k \text{ or } j\neq l
\end{equation}
for all $-M\leq i, k\leq M$ and $0\leq j, l\leq N_\varphi-1$. This is for example the case if 
\begin{equation*}
\delta_a(s,\varphi) = \begin{cases}
a^2 \quad &\text{if } \lvert s \rvert \leq \frac{1}{2a},\, \lvert \varphi\rvert \leq \frac{1}{2a}, \\
0 &\text{else}        
\end{cases}
\end{equation*}
with $a > \max\left(\frac{1}{2h}, \frac{N_\varphi}{2\pi}\right)$. In this setting, the discrete measured data $g_D^\varepsilon$ can be equivalently represented as
\begin{equation*}
g_D^\varepsilon(i,j,\omega) = \Radon f(s_i,\varphi_j) + \xi^{\varepsilon_a}_{i,j}(\omega)
\quad \text{for } -M\leq i\leq M,\, 0\leq j\leq N_\varphi-1,\, \omega\in\Omega 
\end{equation*}
with independent and identically distributed Gaussian random variables $\xi^{\varepsilon_a}_{i,j}:\Omega\to\R$ with expectation zero and variance $\varepsilon_a^2 = \varepsilon^2\,\delta_a(0,0)$, which corresponds to the classical definition of discrete additive Gaussian white noise in the literature.
Hence, in the following we use this more common definition of $g_D^\varepsilon$.
In addition, we assume that the target function $f:\R^2\to\R$ is real-valued and bounded with 
\begin{equation*}
\supp(f)\subseteq B_R(0)
\quad \text{for some } R > 0. 
\end{equation*}

To apply the approximate FBP formula to discrete measurement data, we follow a standard approach~\cite{Natterer2001} and discretize~\eqref{eq: FBP formula with conv} using the composite trapezoidal rule, analogous to~\cite{Beckmann2020, Beckmann2021}, resulting in 
\begin{equation}
\label{eq:discretized_fbp}
f_{L,D}^\varepsilon(x,y,\omega) = \frac{1}{2}\, \Back_D\left(\Fourier^{-1}A_L\ast_D g_D^\varepsilon(\cdot,\cdot,\omega)\right)(x,y)
\quad \text{for } (x,y)\in\R^2, ~ \omega\in\Omega
\end{equation} 
with 
\begin{equation*}
\Back_D\left(\Fourier^{-1}A_L\ast_D g_D^\varepsilon(\cdot,\cdot,\omega)\right)(x,y) = \frac{1}{N_\varphi} \sum\limits_{j=0}^{N_\varphi-1} (\Fourier^{-1}A_L\ast_D g_D^\varepsilon(\cdot,\cdot,\omega))(x\cos(\varphi_j)+y\sin(\varphi_j),\varphi_j)
\end{equation*}
for $(x,y)\in\R^2$ and
\begin{equation*}
(\Fourier^{-1}A_L\ast_D g_D^\varepsilon(\cdot,\cdot,\omega))(s,\varphi_j) = h\,\sum\limits_{i=-M}^M \Fourier^{-1}A_L(s-s_i)\,g_D^\varepsilon(i,j,\omega)
\end{equation*}
for $s \in \R$ and $0 \leq j \leq N_\varphi-1$.
Note that $f_{L,D}^\varepsilon$ is well-defined with $f_{L,D}^\varepsilon(\cdot,\cdot,\omega)\in\L^2_\loc(\R^2)$ for all $\omega\in\Omega$ if the filter function $A_L\in\L^2(\R)$ has finite bandwidth $L>0$.

Moreover, we need to discretize the optimized filter function derived in the last section. For this, we again use the composite trapezoidal rule, which leads to
\begin{equation*}
\int_{0}^\pi \lvert \Fourier(\Radon f)(\sigma,\varphi) \rvert^2 \: \d \varphi \approx \frac{\pi}{N_\varphi} \sum_{j=0}^{N_\varphi-1}  \vert \Fourier_D (\Radon f) (\sigma,j)\vert^2  \quad \text{for } \sigma\in \R
\end{equation*}
with 
\begin{equation*}
\Fourier_D (\Radon f) (\sigma,j) = h \sum_{i=-M}^M \Radon f(s_i,\varphi_j) \, \e^{-\i s_i \sigma} \quad \text{for } \sigma\in \R,\, 0\leq j\leq N_\varphi-1, 
\end{equation*}
and 
\begin{equation*}
\Fourier h_a(\sigma,0,\omega) \approx \Fourier_D h_a (\sigma,0,\omega) = h \sum_{i=-M}^M h_a(s_i,0,\omega) \, \e^{-\i s_i \sigma} \quad \text{for } \sigma\in \R,\,\omega\in \Omega
\end{equation*}
with 
\begin{align*}
\Ex \left( \lvert \Fourier_D h_a(\sigma,0,\cdot) \rvert^2  \right) &= h^2 \sum_{i=-M}^M \sum_{j=-M}^M \Ex(h_a(s_i,0,\cdot)\, h_a(s_j,0,\cdot)) \, \e^{-\i (s_i-s_j) \,\sigma} \\
&= h^2 \sum_{i=-M}^M \delta_a(0,0) = h^2\, (2M+1)\, \delta_a(0,0)
\quad \text{for all } \sigma\in \R,
\end{align*}
where we used \eqref{eq: discrete white noise uncorrelated}.
Replacing the continuous expressions in Theorem~\ref{theo:optimized_filter_continuous} with the discretized expressions yields an optimized filter for discrete measurements in parallel beam geometry.

\begin{definition}[Optimized filter function with compact support for discrete measurements]\label{def:optimized filter discrete data}
For a real-valued, bounded function $f:\R^2\to\R$ with $\supp(f) \subseteq B_R(0)$ for $R > 0$ and discrete measurements $g_D^{\varepsilon}:\{ -M,\dots , M \}\times \{ 0,\dots , N_\varphi-1 \} \times \Omega\to\R$ for fixed $M,\,N_\varphi\in\N$, given by 
\begin{equation*}
g_D^{\varepsilon}(i,j,\omega) = \Radon f(s_i,\varphi_j) + \xi^{\varepsilon_a}_{i,j}(\omega)
\end{equation*} 
with $s_i = i h$ for $h>0$ so that $Mh \geq R$, $\varphi_j = j\, \nicefrac{\pi}{N_\varphi}$ and discrete Gaussian white noise $\xi^{\varepsilon_a}$, the {\em optimized filter} $A_{L,D}^\ast:\R\to\R$ of bandwidth $L>0$ is defined by 
\begin{align*}
A_{L,D}^\ast(\sigma) = \begin{dcases} 
\frac{\vert\sigma\vert\, \frac{1}{N_\varphi} \sum_{j=0}^{N_\varphi-1} \vert\Fourier_D (\Radon f) (\sigma,j)\vert^2}{\frac{1}{N_\varphi} \sum_{j=0}^{N_\varphi-1} \vert\Fourier_D (\Radon f) (\sigma,j)\vert^2 + h^2\,\varepsilon_a^2\, (2M+1) } &\text{for } \sigma \in [-L,L] \\
0 &\text{for } \sigma \not\in [-L,L].
\end{dcases}
\end{align*}
\end{definition}

Note that the optimized filter for discrete measurements depends on the target function $f$, the discretization parameters $M$, $N_\varphi$ and $h$ as well as on the variance $\varepsilon_a^2$, which imposes practical limitations on the applicability of $A_{L,D}^\ast$ because the objective function and variance are rarely known.
We address this issue in the next chapter.
As in the continuous setting, the optimized filter $A_{L,D}^\ast$ for noise-free discrete measurements coincides with the Ram-Lak filter.

In contrast to the optimized filter function for continuous measurement data derived in Theorem~\ref{theo:optimized_filter_continuous}, $A_{L,D}^\ast$ does not minimize 
\begin{equation}\label{eq:objective function discrete setting}
\Ex \left( \lVert f_{L,D}^\varepsilon - f \rVert_{\L^2(K)}^2  \right)
\end{equation}
with compact $K\subset \R^2$, which is illustrated in the following remark.

\begin{remark}
The reconstruction error can be bounded above by
\begin{align*}
\lVert f_{L,D}^\varepsilon(\cdot,\cdot,\omega) - f \rVert_{\L^2(K)}^2 
\leq& \left\Vert  f_{L,D}^\varepsilon (\cdot,\cdot,\omega) - \nicefrac{1}{2}\, \Back_D\left( \Fourier^{-1} \left(\vert \cdot \vert\, \ind_{[-L,L]}(\cdot) \, \Fourier_D(\Radon f)\right)\right)\right\Vert^2_{\L^2(K)} \\
&+ \left\Vert\nicefrac{1}{2}\, \Back_D\left( \Fourier^{-1} \left(\vert \cdot \vert\, \ind_{[-L,L]}(\cdot) \, \Fourier_D(\Radon f)\right)\right) - f \right\Vert_{\L^2(K)}^2
\end{align*}
for fixed $\omega\in\Omega$ and compact $K\subset \R^2$.
The first summand can be interpreted as a combination of the error induced by the filter function $A_L$ and the measurement process.
In contrast, the second summand can be considered as an approximation error, which is independent of the chosen filter function.
The first summand can be bounded above by
\begin{align*}
&\left\Vert  f_{L,D}^\varepsilon (\cdot,\cdot,\omega) - \nicefrac{1}{2}\, \Back_D\left( \Fourier^{-1} \left(\vert \cdot \vert\, \ind_{[-L,L]}(\cdot) \, \Fourier_D(\Radon f)\right)\right)\right\Vert^2_{\L^2(K)}  \\
&\qquad \leq \diam(K)^2\frac{2 L N_\varphi}{16\pi^2 N_\varphi^2} \sum_{j=0}^{N_\varphi-1} \int_{-L}^L \vert \sigma \vert^2 \, \left\vert W(\nicefrac{\sigma}{L})\,\Fourier_D g_D^\varepsilon (\sigma,j,\omega) - \Fourier_D(\Radon f)(\sigma,\varphi_j) \right\vert^2 \: \d\sigma.
\end{align*}
Now, the optimized filter function for discrete measurement data defined in Definition~\ref{def:optimized filter discrete data} minimizes this upper bound, as can be shown by arguments similar to those in Section~\ref{sec:continuous filter}. To derive a filter function that minimizes \eqref{eq:objective function discrete setting}, one needs a different optimization approach that is more involved since the relations used in the continuous setting do not carry over to the discrete case.
\end{remark}

\section{Numerical Experiments}\label{sec:Numerik}

\begin{figure}[t]
\centering
\subfigure[Ram-Lak]{\includegraphics[width=.325\textwidth]{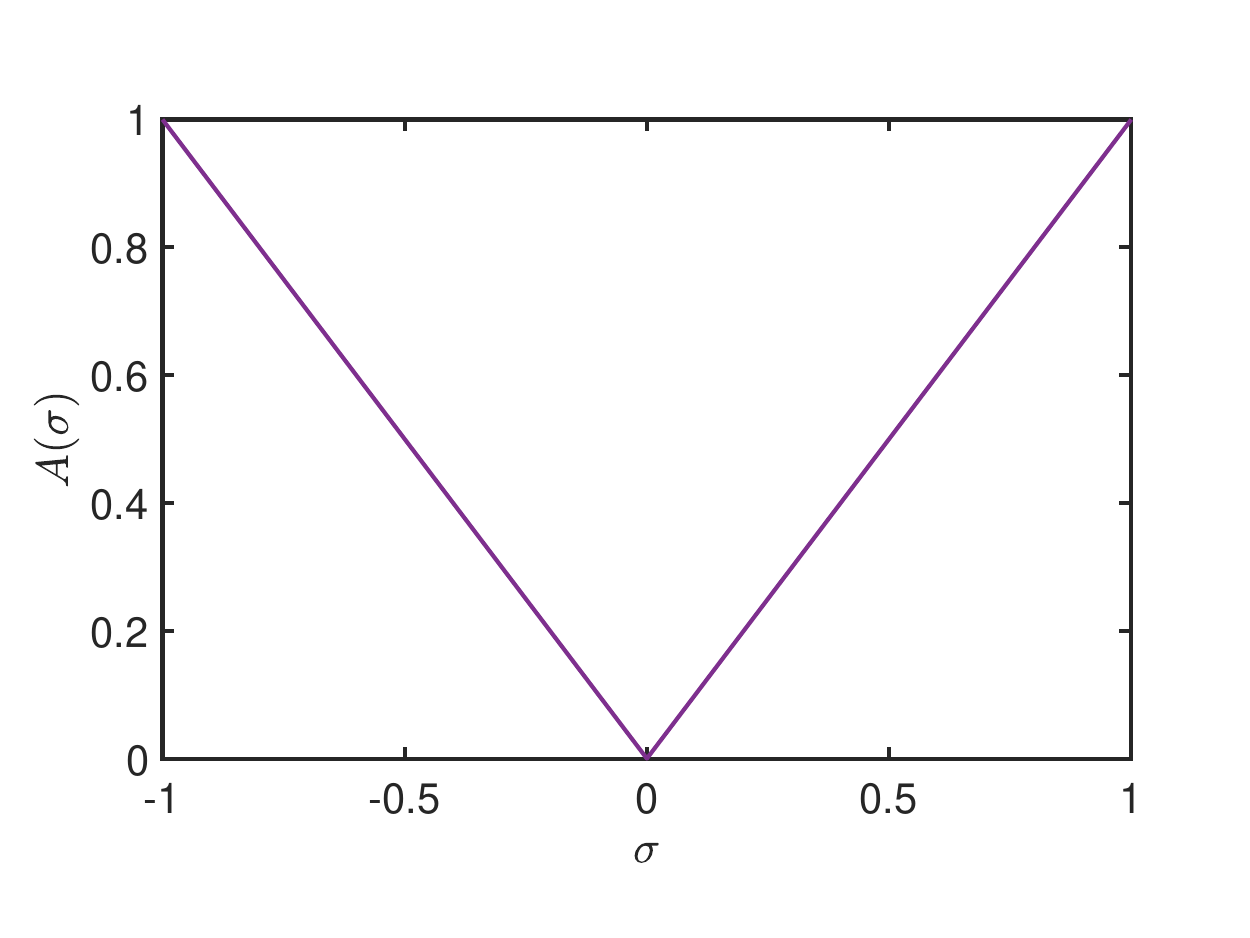}}
\hfill
\subfigure[Shepp-Logan]{\includegraphics[width=.325\textwidth]{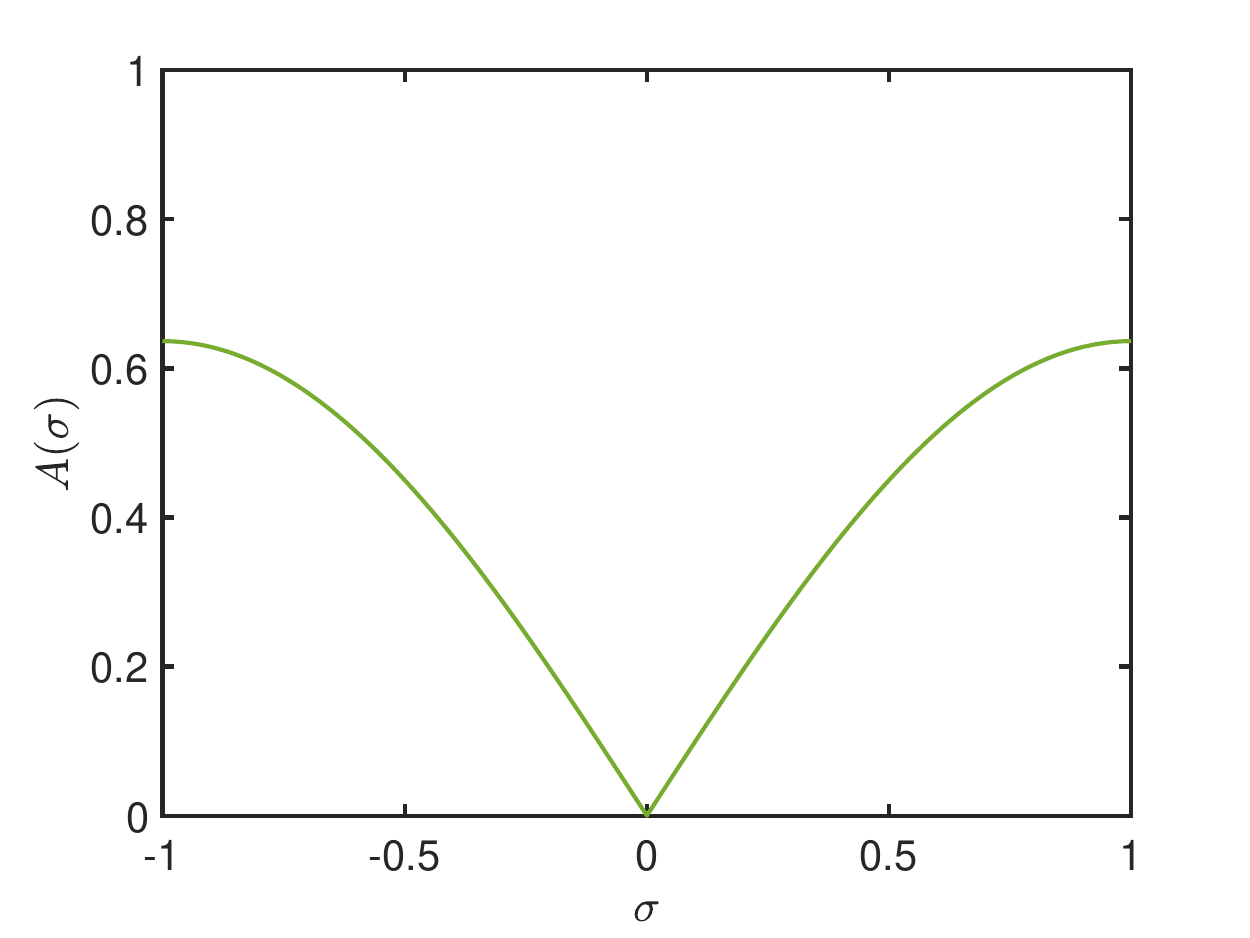}}
\hfill
\subfigure[Cosine]{\includegraphics[width=.325\textwidth]{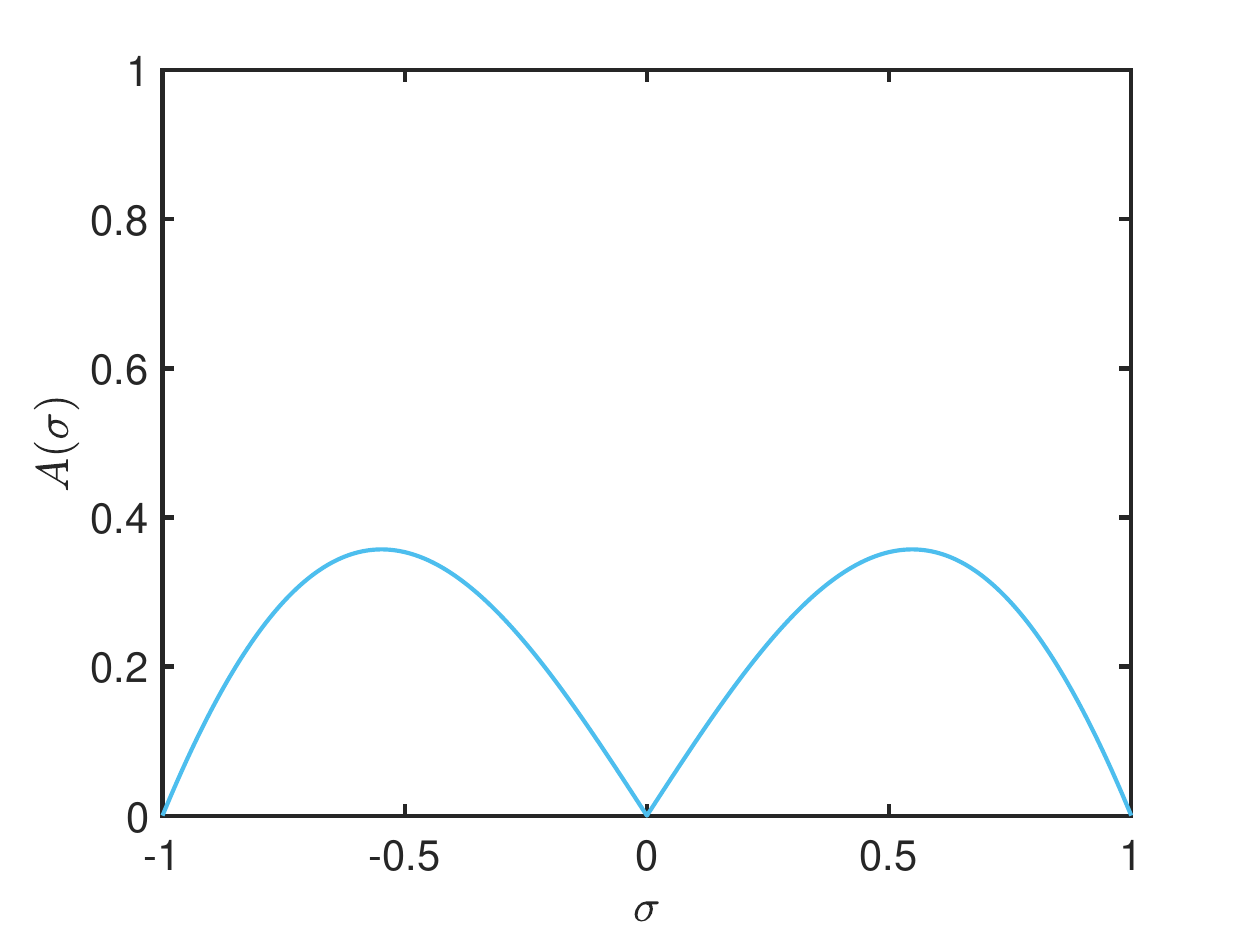}}
\hfill
\subfigure[Hamming ($\beta = 0.55$)]{\includegraphics[width=.325\textwidth]{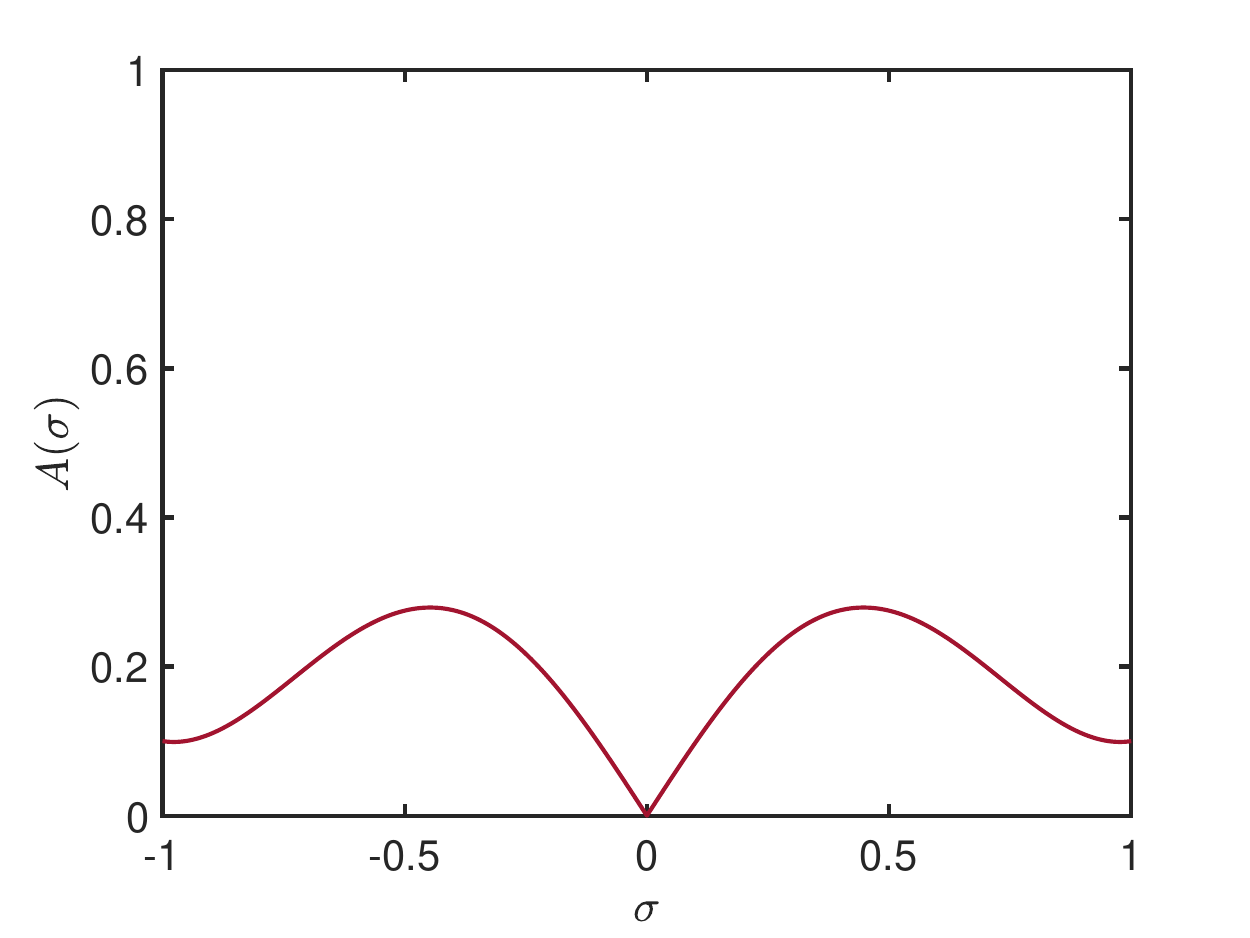}}
\hfill
\subfigure[Hamming ($\beta = 0.7$)]{\includegraphics[width=.325\textwidth]{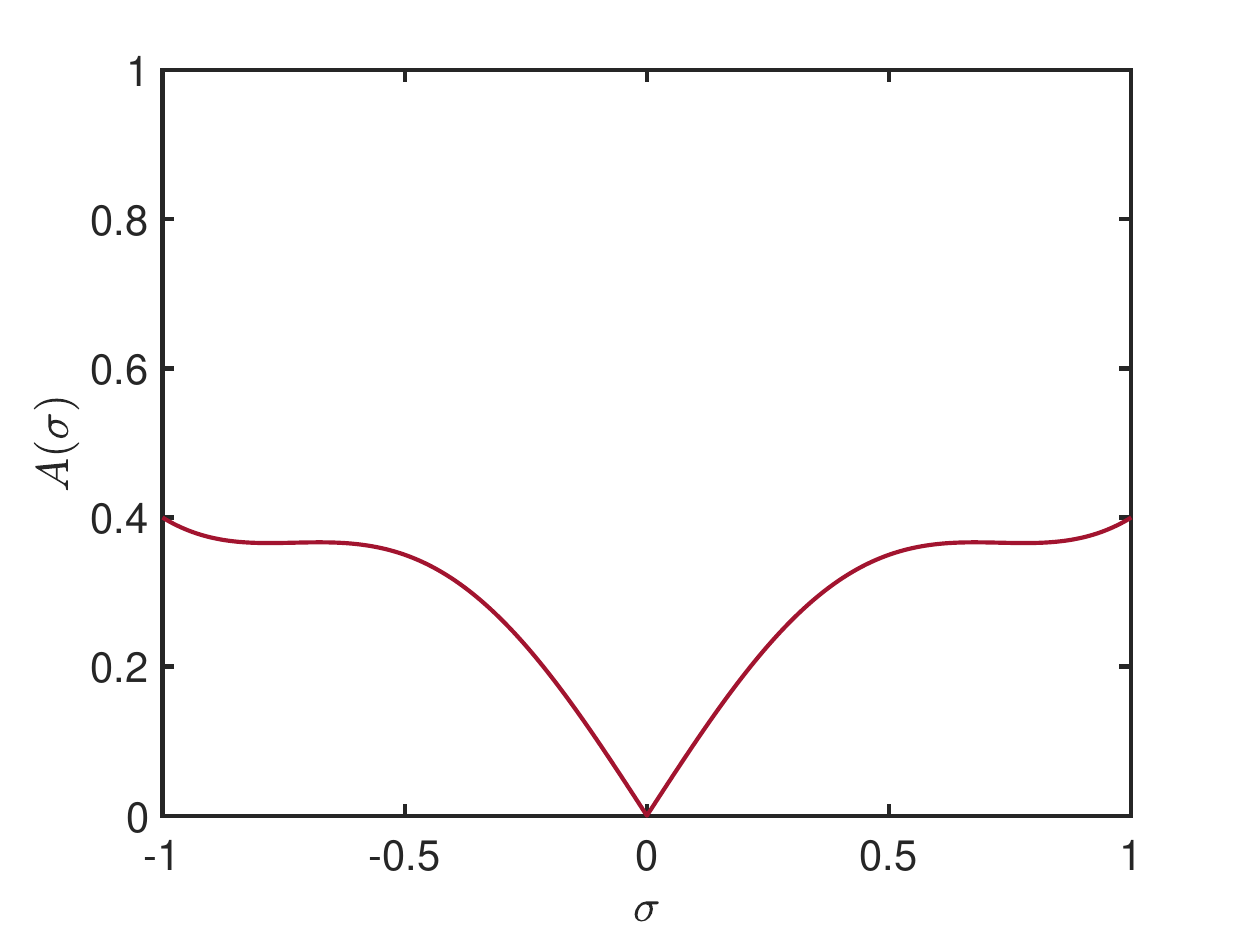}}
\hfill
\subfigure[Hamming ($\beta = 0.85$)]{\includegraphics[width=.325\textwidth]{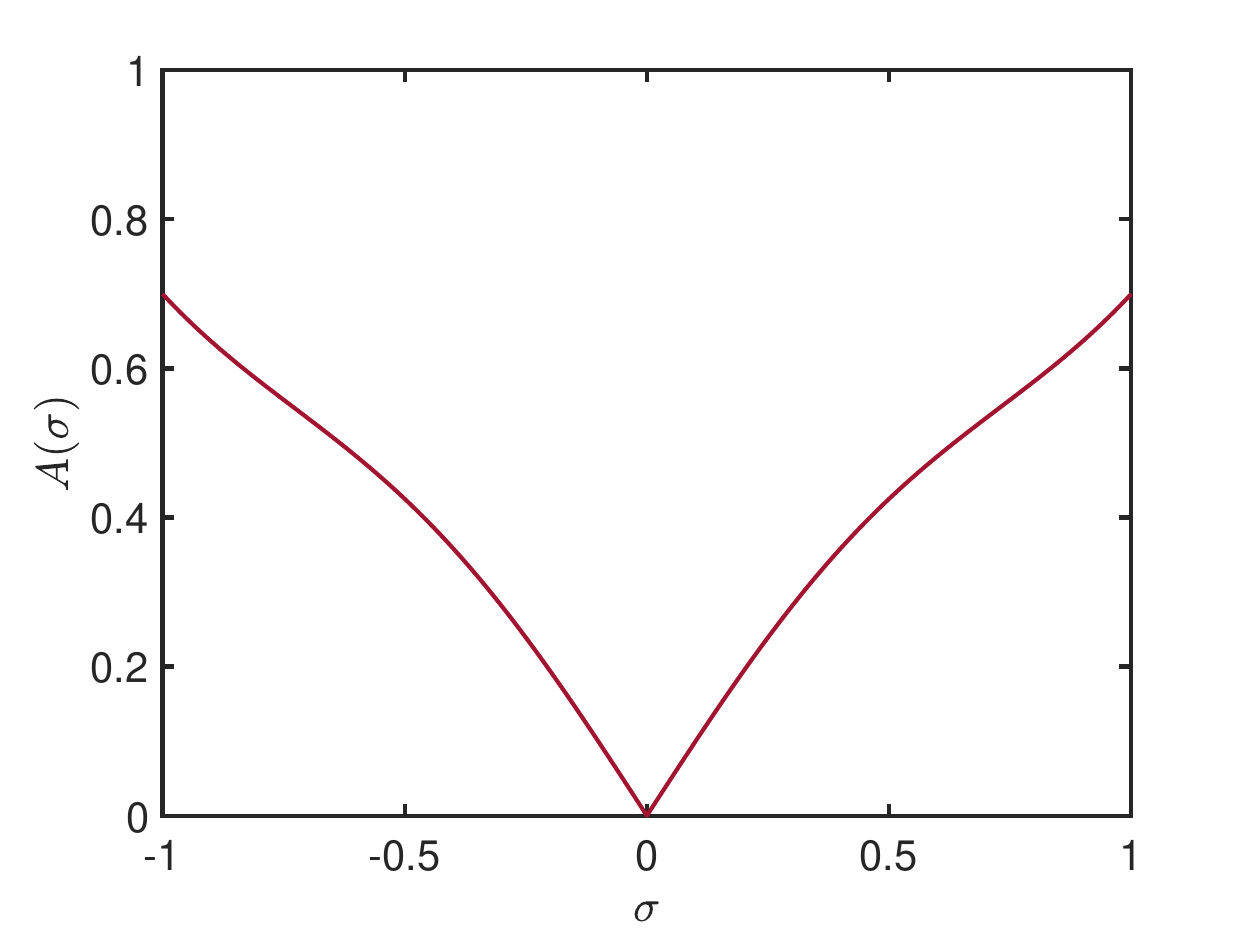}}
\caption{Selection of classical low-pass filters used in our numerical experiments.}
\label{fig:classical_filter}
\end{figure}

In this section, we finally present selected numerical experiments to illustrate and validate our theoretical findings.
To this end, recall that we aim at recovering the target function $f \in \L^2(\R^2)$ with $\supp(f) \subseteq B_R(0)$ from noisy measurements $g_D^\varepsilon: \{-M,\ldots,M\} \times \{0,\ldots,N_\varphi-1\} \times \Omega \to \R$ given by 
\begin{equation*}
g_D^\varepsilon(i,j,\omega) = \Radon f(s_i,\varphi_j) + \xi^{\varepsilon_a}_{i,j}(\omega)
\quad \text{for } -M \leq i \leq M,\, 0 \leq j \leq N_\varphi-1,\, \omega \in \Omega 
\end{equation*}
with $s_i = i h$ for $h\geq \nicefrac{R}{M}$, $\varphi_j = j \, \nicefrac{\pi}{N_\varphi}$ and noise $\xi^{\varepsilon_a}_{i,j}: \Omega \to \R$ with expectation zero and variance~$\varepsilon_a^2$, which is assumed to be known or can be estimated.
For simplicity, by rescaling the noise level we set $\varepsilon_a = \varepsilon$. Moreover, we from now on suppress the dependence on $\omega \in \Omega$ and simply write
\begin{equation*}
g_D^\varepsilon(i,j) = \Radon f(s_i,\varphi_j) + \xi^\varepsilon_{i,j}
\quad \text{for } -M \leq i \leq M,\, 0 \leq j \leq N_\varphi-1.
\end{equation*}

We wish to apply the discretized approximate FBP formula~\eqref{eq:discretized_fbp}, i.e.,
\begin{equation*}
f_{L,D}^\varepsilon = \frac{1}{2}\, \Back_D\bigl(\Fourier^{-1}A_L\ast_D g_D^\varepsilon\bigr).
\end{equation*}
The evaluation of $f_{L,D}^\varepsilon$, however, requires the computation of the values
\begin{equation*}
(\Fourier^{-1} A_L *_D g_D^\varepsilon)(x\cos(\varphi_j)+y\sin(\varphi_j),\varphi_j)
\quad \forall \, 0 \leq j \leq N-1
\end{equation*}
for each reconstruction point $(x,y) \in \R^2$, which is computationally expensive.
To reduce the computational costs, a standard approach is to evaluate the function
\begin{equation*}
s \mapsto (\Fourier^{-1} A_L *_D g_D^\varepsilon)(s,\varphi_j)
\end{equation*}
only at the points $s = s_l$, $l \in I$, for a sufficiently large index set $I \subset \Z$ and interpolate its function value for $s = x\cos(\varphi_j)+y\sin(\varphi_j)$ using an interpolation method $\Int$.
This leads us to the {\em discrete FBP reconstruction formula}
\begin{equation}
\label{eq:discrete_FBP_formula}
f_\FBP = \frac{1}{2} \Back_D \bigl(\Int[\Fourier^{-1} A_L *_D g_D^\varepsilon]\bigr),
\end{equation}
where linear or cubic spline interpolation is typically used depending on the regularity of~$f$.
Note, however, that the errors incurred by the utilized interpolation method are not taken into account in our theoretical derivation of optimized filter functions.
Moreover, we choose the bandwidth $L > 0$ of the filter $A_L$ and the discretization parameters $h > 0$, $M \in \N$ optimally depending on the number of angles $N_\varphi \in \N$ according to~\cite{Natterer2001} via
\begin{equation*}
M = \left\lfloor\frac{N_\varphi}{\pi}\right\rfloor,
\quad
L =\frac{\pi M}{R},
\quad
h = \frac{\pi}{L},
\end{equation*}
where we assume that $\supp(f)\subseteq B_R(0)$ with $R\in\N$.

\begin{table}[t]
\centering
\caption{Window functions of classical low-pass filters, where $W(\sigma) = 0$ for all $|\sigma| > 1$ in all cases.}
\begin{tabular}{l|c|c}
Name & $W(\sigma)$ for $|\sigma|\leq 1$ & Parameter\\
\hline
Ram-Lak & $1$ & - \\
Shepp-Logan & $\sinc(\nicefrac{\pi \sigma}{2})$ & - \\
Cosine & $\cos(\nicefrac{\pi \sigma}{2})$ & - \\
Hamming & $\beta + (1-\beta) \cos(\pi \sigma)$ & $\beta \in [\nicefrac{1}{2},1]$ \\
\end{tabular}
\label{tab:conventional_filters}
\end{table}

The application of our optimized filter for discrete measurements from Definition~\ref{def:optimized filter discrete data},
\begin{equation*}
A_{L,D}^\ast(\sigma) = \begin{dcases} 
\frac{\vert\sigma\vert\, \frac{1}{N_\varphi} \sum_{j=0}^{N_\varphi-1} \vert\Fourier_D (\Radon f) (\sigma,j)\vert^2}{\frac{1}{N_\varphi} \sum_{j=0}^{N_\varphi-1} \vert\Fourier_D (\Radon f) (\sigma,j)\vert^2 + h^2\,\varepsilon^2\, (2M+1) } &\text{for } \sigma \in [-L,L] \\
0 &\text{for } \sigma \not\in [-L,L],
\end{dcases}
\end{equation*}
requires the exact knowledge of the noiseless Radon samples $\Radon f(s_i,\varphi_j)$ for all $i = -M,\ldots,M$ and $j = 0,\ldots,N_\varphi-1$.
As this information is typically not available in applications, we investigate two options to overcome this bottleneck.
In the first approach, we simply replace the Radon samples $\Radon f(s_i,\varphi_j)$ by the available measurements $g_D^\varepsilon(i,j)$ leading to the filter function
\begin{equation*}
A_{L,D}^\varepsilon(\sigma) = \begin{dcases} 
\frac{\vert\sigma\vert\, \frac{1}{N_\varphi} \sum_{j=0}^{N_\varphi-1} \vert\Fourier_D g_D^\varepsilon (\sigma,j)\vert^2}{\frac{1}{N_\varphi} \sum_{j=0}^{N_\varphi-1} \vert\Fourier_D g_D^\varepsilon (\sigma,j)\vert^2 + h^2\,\varepsilon^2\, (2M+1) } &\text{for } \sigma \in [-L,L] \\
0 &\text{for } \sigma \not\in [-L,L].
\end{dcases}
\end{equation*}
In the second approach, we first convolve the given measurements $g_D^\varepsilon$ with a Wiener filter, which minimizes the expected squared error between convolved data $\hat{g}_D^\varepsilon$ and true data~$\Radon f$.
Thereon, we replace the Radon samples $\Radon f(s_i,\varphi_j)$ by the denoised measurements $\hat{g}_D^\varepsilon(i,j)$ yielding the filter function
\begin{equation*}
\hat{A}_{L,D}^\varepsilon(\sigma) = \begin{dcases} 
\frac{\vert\sigma\vert\, \frac{1}{N_\varphi} \sum_{j=0}^{N_\varphi-1} \vert\Fourier_D (\hat{g}_D^\varepsilon) (\sigma,j)\vert^2}{\frac{1}{N_\varphi} \sum_{j=0}^{N_\varphi-1} \vert\Fourier_D (\hat{g}_D^\varepsilon) (\sigma,j)\vert^2 + h^2\,\varepsilon^2\, (2M+1) } &\text{for } \sigma \in [-L,L] \\
0 &\text{for } \sigma \not\in [-L,L],
\end{dcases}
\end{equation*}
where the kernel size of the Wiener filter acts as hyperparameter.
Let us stress that alternative denoising techniques can be applied like, e.g., data-driven approaches based on neural networks, which is beyond the scope of this work and asks for more in-depth future research.

In our numerical experiments we compare the reconstruction performance of our optimized filter function with classical low-pass filters, which are illustrated in Figure~\ref{fig:classical_filter} and whose window functions are listed in Table~\ref{tab:conventional_filters}, as well as the recently proposed filters from~\cite{Kabri2024, Pelt2014, Pelt2015}.

\subsection{Shepp-Logan phantom}\label{sec:shepp_logan}

\begin{figure}[t]
\centering
\subfigure[Shepp-Logan phantom]{\includegraphics[width=.24\textwidth]{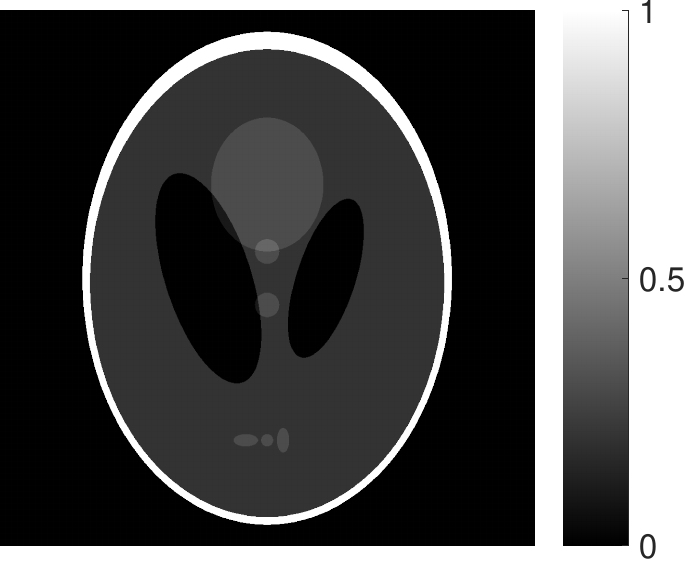}}
\hfil
\subfigure[Sinogram of (a)]{\includegraphics[width=.24\textwidth]{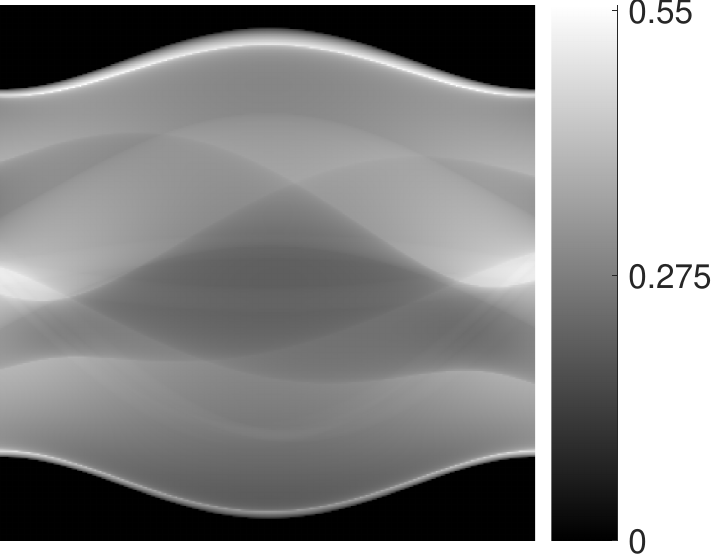}}
\hfil
\subfigure[Modified phantom]{\includegraphics[width=.24\textwidth]{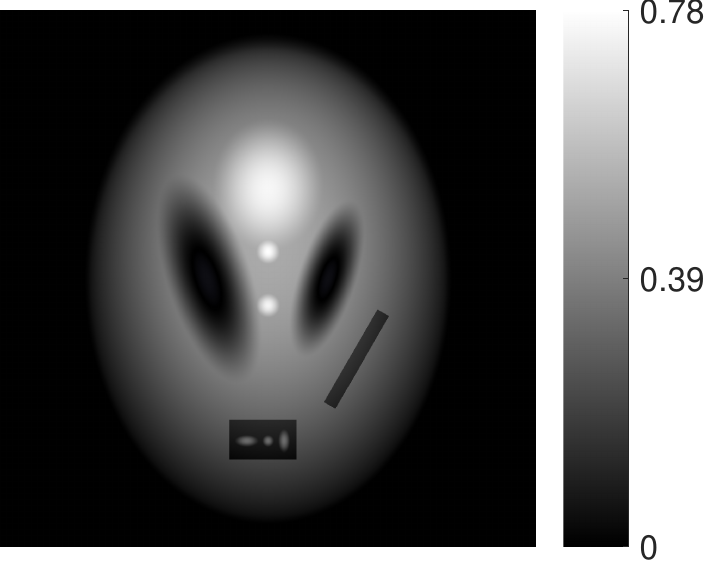}}
\hfil
\subfigure[Sinogram of (c)]{\includegraphics[width=.24\textwidth]{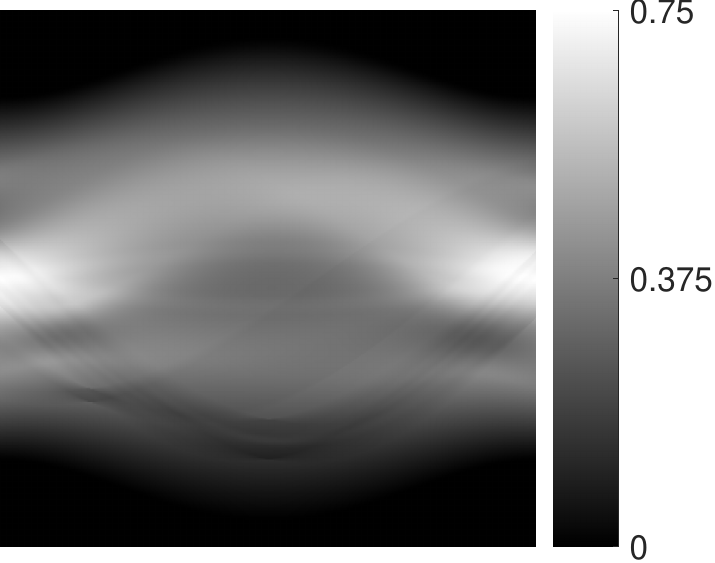}}
\caption{Phantoms used in our numerical experiments along with their sinograms (Radon data).}
\label{fig:phantoms}
\end{figure}

\begin{figure}[t]
\centering
\subfigure[$p_\mathrm{noise}=0.05$]{\includegraphics[width=.275\textwidth]{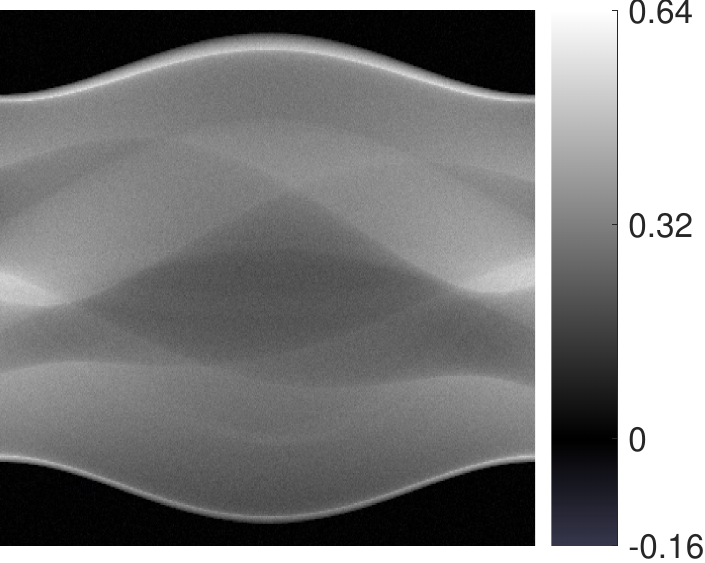}}
\hfil
\subfigure[$p_\mathrm{noise}=0.1$]{\includegraphics[width=.275\textwidth]{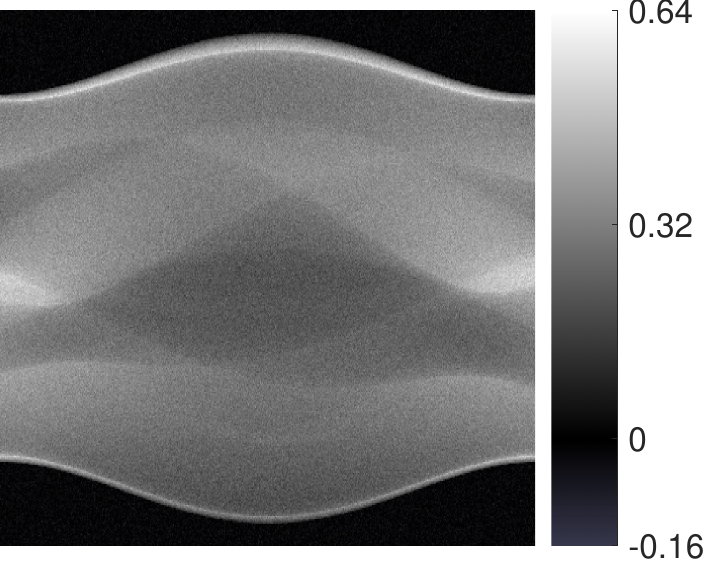}}
\hfil
\subfigure[$p_\mathrm{noise}=0.15$]{\includegraphics[width=.275\textwidth]{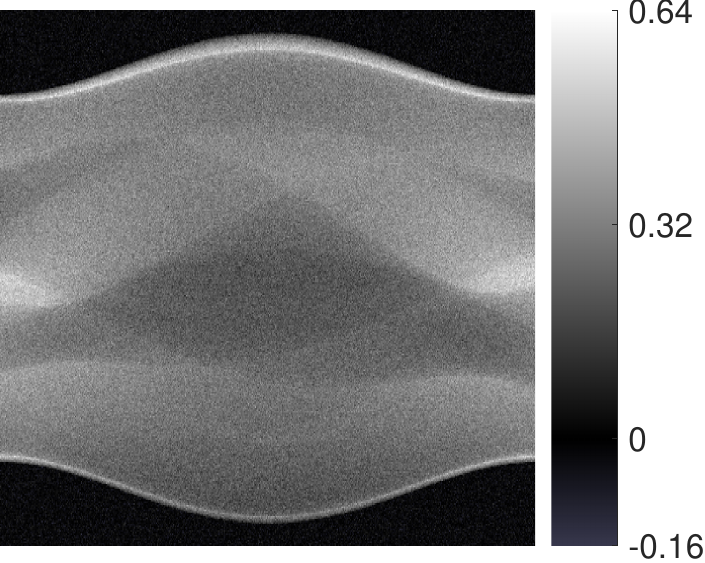}}
\caption{Noisy sinograms of the Shepp-Logan phantom with different noise levels.}
\label{fig:sinogram_shepp_logan_noise}
\end{figure}

In a first set of numerical simulations, we make use of the popular Shepp-Logan phantom, proposed in~\cite{Shepp1974} as a simplistic model of a human head section.
It consists of ten ellipses of different sizes, eccentricities and locations leading to a piecewise constant attenuation function $f_{\mathrm{SL}} \in \L^2(\R^2)$ with $\supp(f_{\mathrm{SL}}) \subseteq B_1(0)$, whose Radon transform can be computed analytically, cf.~Figure~\ref{fig:phantoms}~(a)-(b).
The noisy measurements $g_D^\varepsilon(i,j)$ are simulated by adding white Gaussian noise with variance $\varepsilon^2$ to the Radon samples $\Radon f_{\mathrm{SL}}(s_i,\varphi_j)$, where $\varepsilon = p_{\mathrm{noise}} \, m_{\Radon f_{\mathrm{SL}}}$ with $p_{\mathrm{noise}} \in \{0.05, \, 0.1, \, 0.15\}$ depends on the arithmetic mean
\begin{equation*}
m_{\Radon f_{\mathrm{SL}}} = \frac{1}{(2M+1) N_\varphi} \sum_{i=-M}^M \sum_{j=0}^{N_\varphi-1} \vert\Radon f_{\mathrm{SL}}(s_i,\varphi_j)\vert,
\end{equation*}
see Figure~\ref{fig:sinogram_shepp_logan_noise} for an illustration of the resulting noisy sinograms.

\begin{figure}[p]
\centering
\includegraphics[height=1cm]{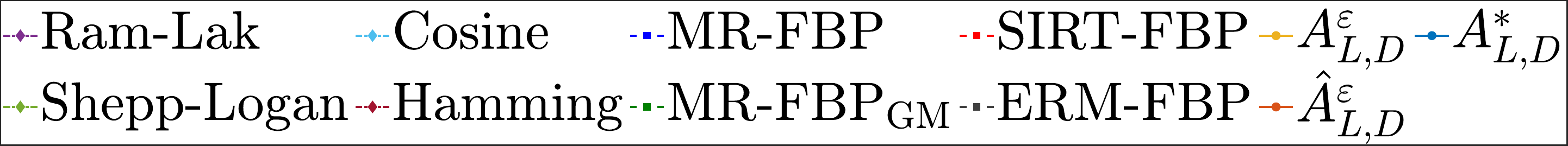}\\
\subfigure[$p_\mathrm{noise}=0.05$]{%
\includegraphics[width=.5\textwidth, viewport=50 25 1075 675]{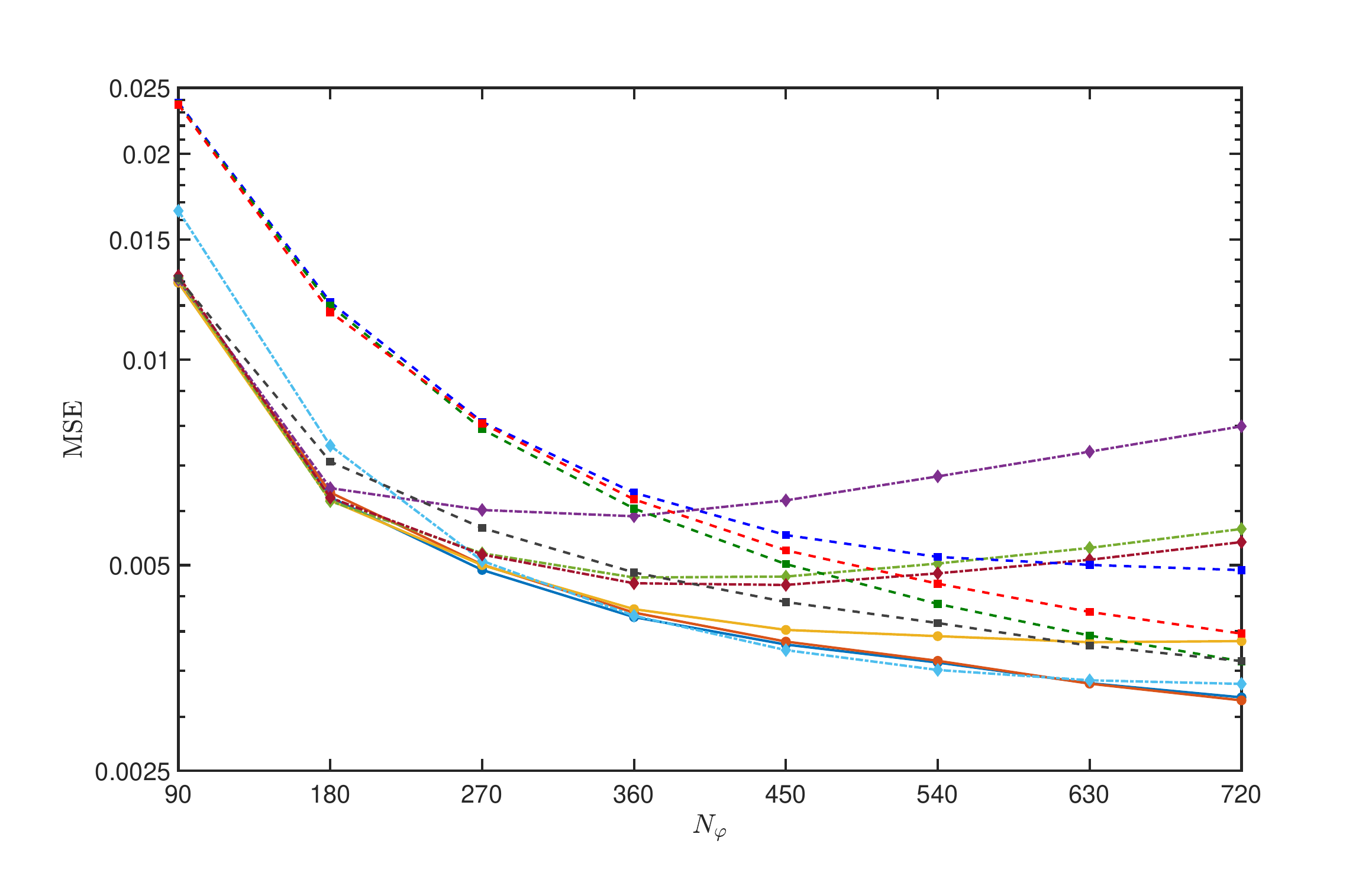}%
\includegraphics[width=.5\textwidth, viewport=50 25 1075 675]{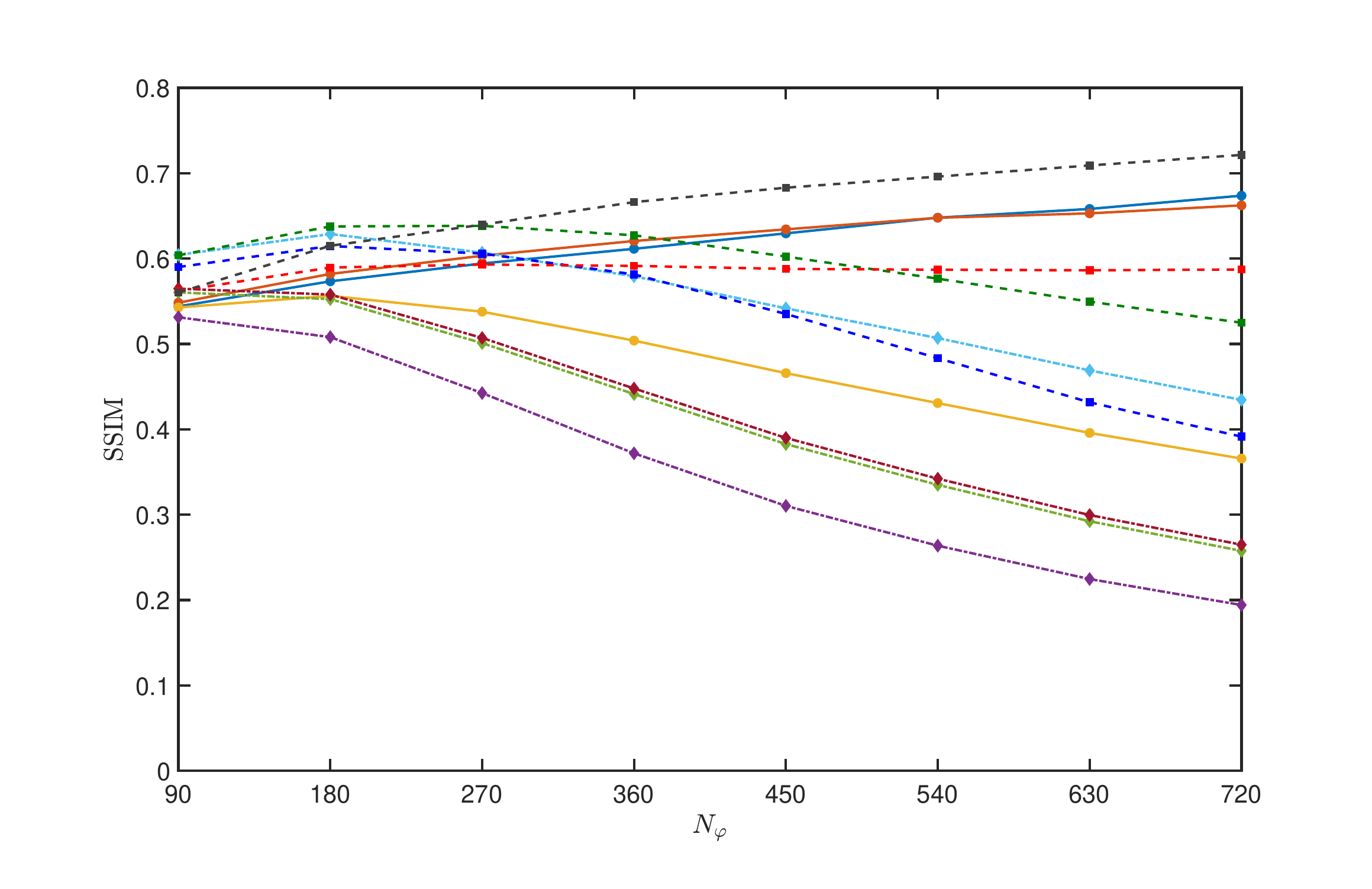}}
\hfill
\subfigure[$p_\mathrm{noise}=0.1$]{%
\includegraphics[width=.5\textwidth, viewport=50 25 1075 675]{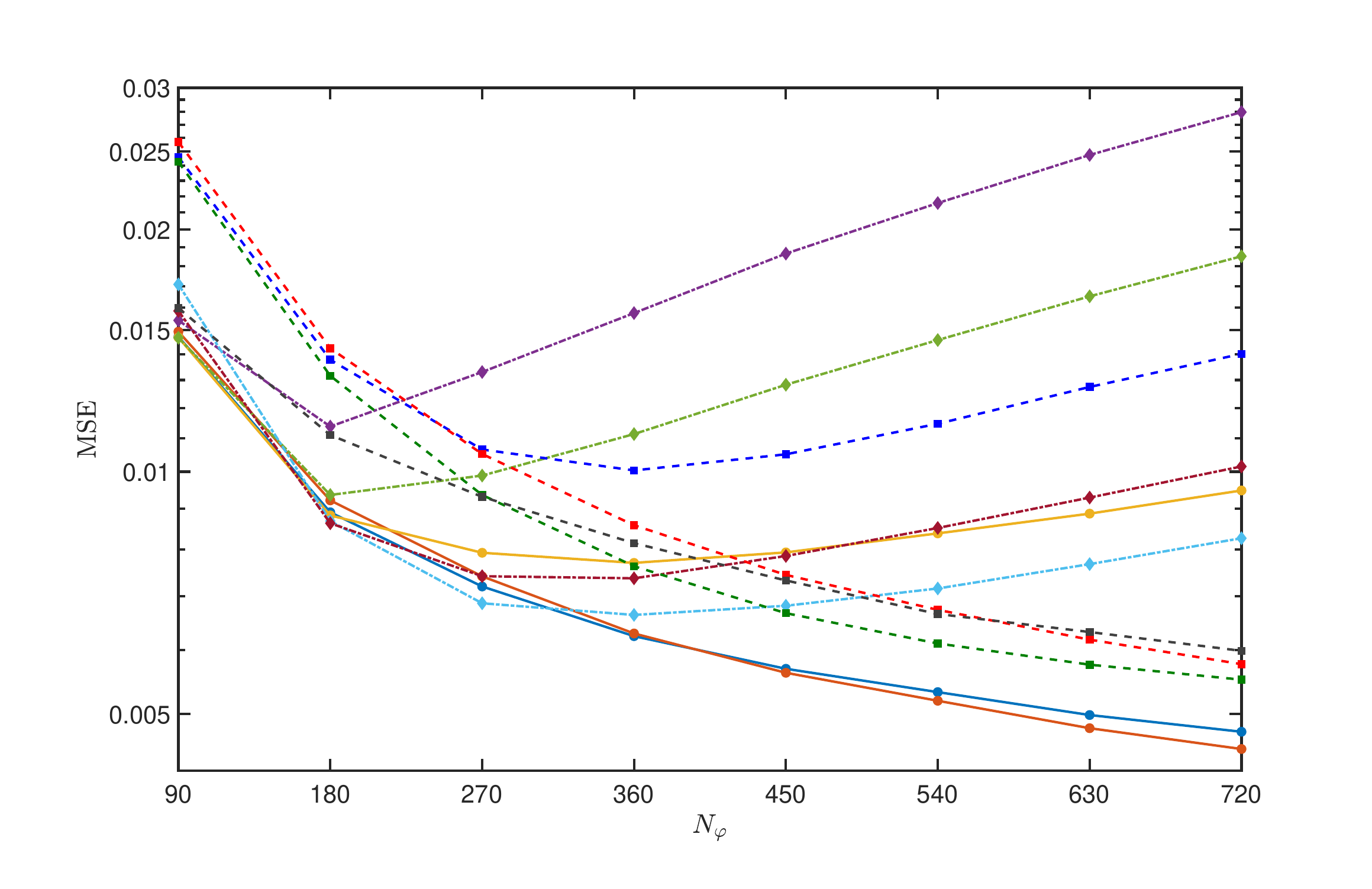}%
\includegraphics[width=.5\textwidth, viewport=50 25 1075 675]{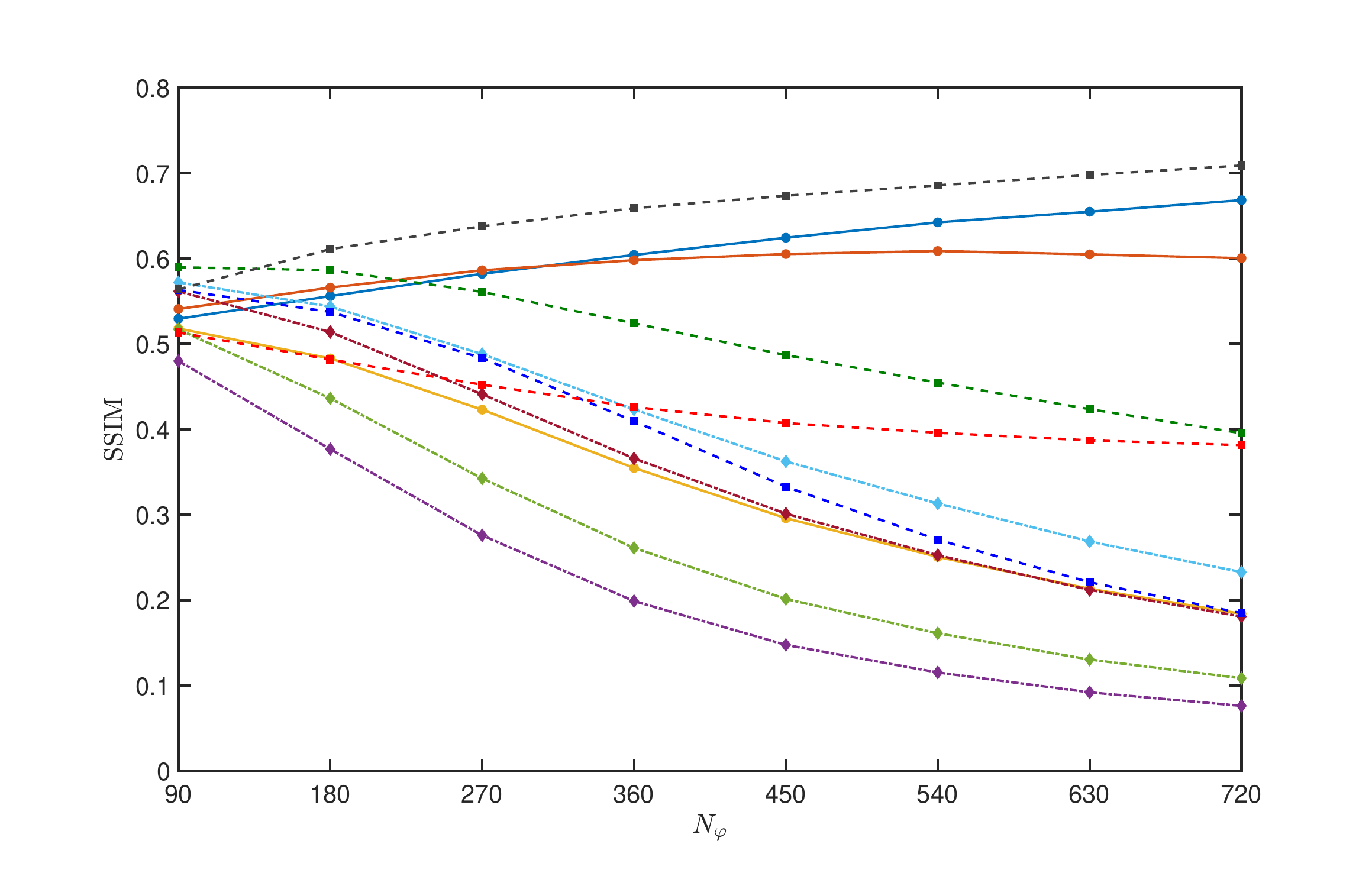}}
\hfill
\subfigure[$p_\mathrm{noise}=0.15$]{%
\includegraphics[width=.5\textwidth, viewport=50 25 1075 675]{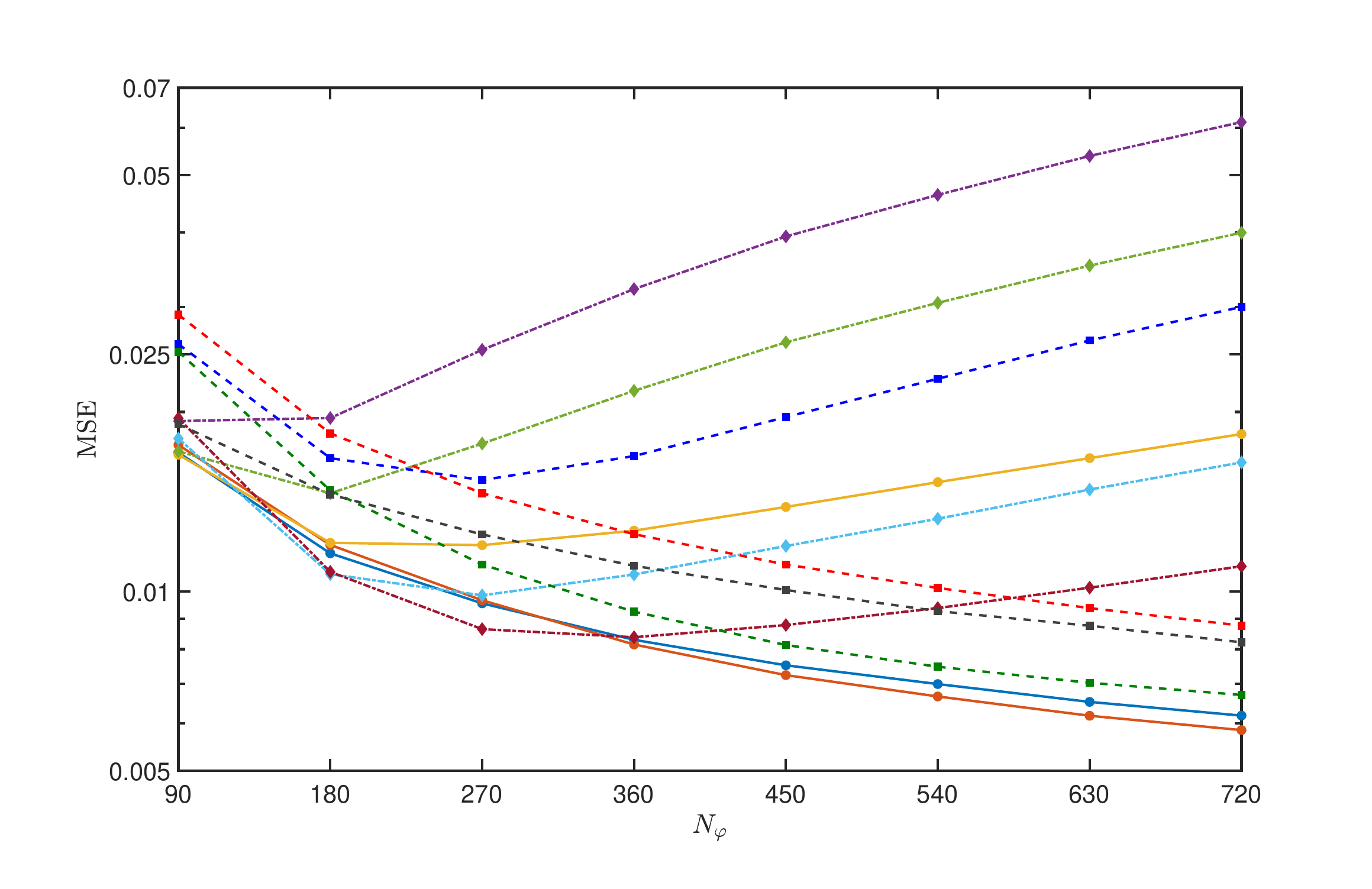}%
\includegraphics[width=.5\textwidth, viewport=50 25 1075 675]{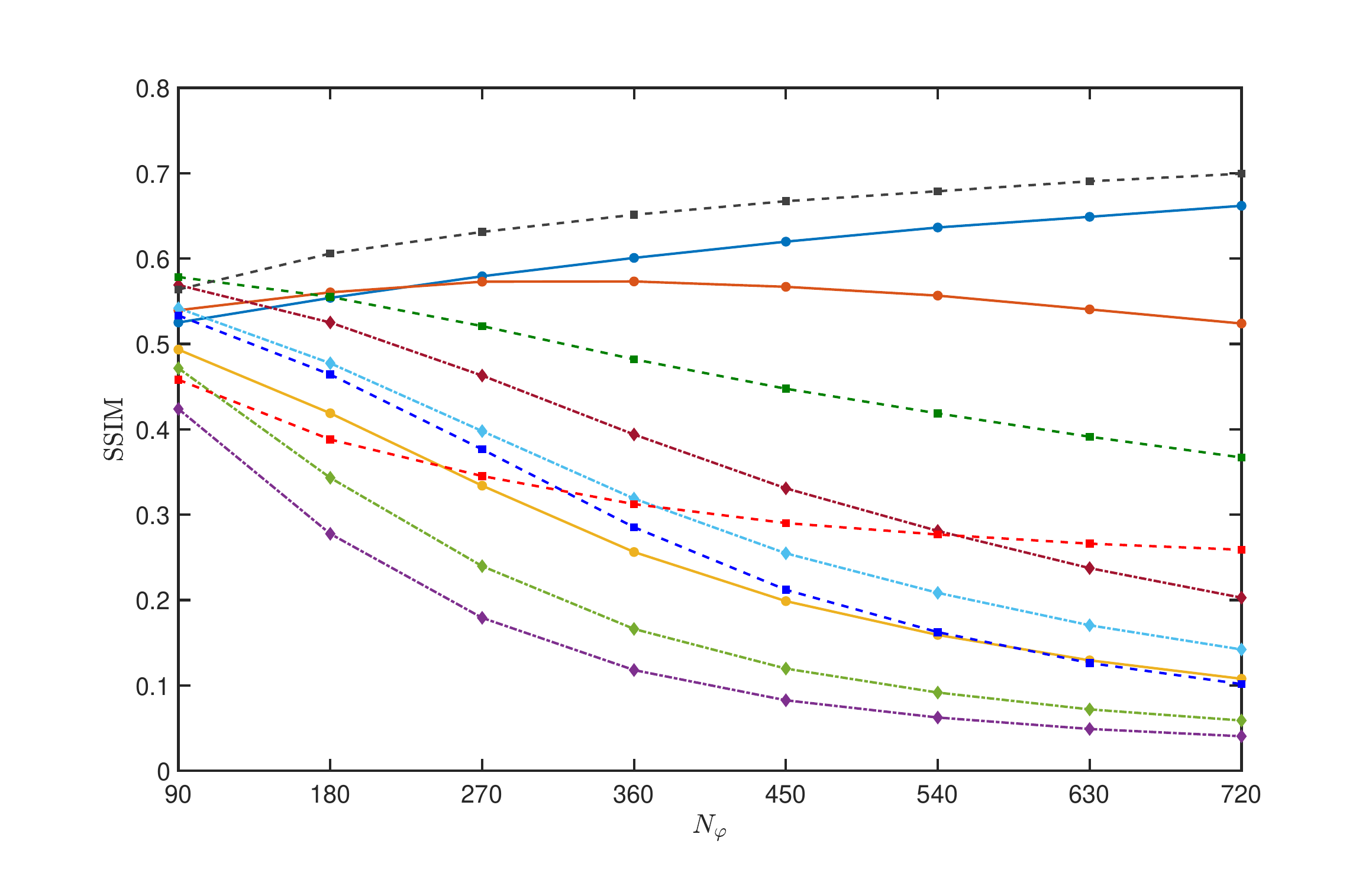}}
\caption{Plots of the MSE and SSIM of FBP reconstructions for the Shepp-Logan phantom.}
\label{fig:errors_shepp_logan_noise}
\end{figure}

\begin{figure}[p]
\centering
\subfigure[Ground truth]{\includegraphics[height=3.85cm]{SheppLogan_phantom.pdf}}
\hfill
\subfigure[Ram-Lak]{\includegraphics[height=3.85cm]{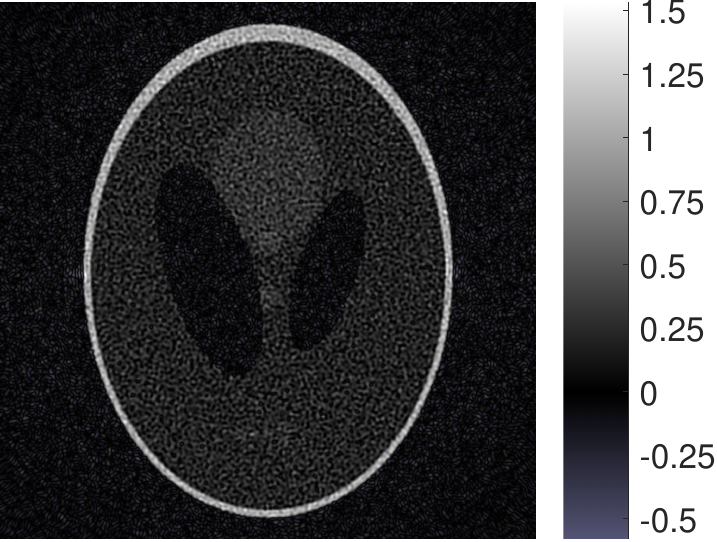}}
\hfill
\subfigure[Shepp-Logan]{\includegraphics[height=3.85cm]{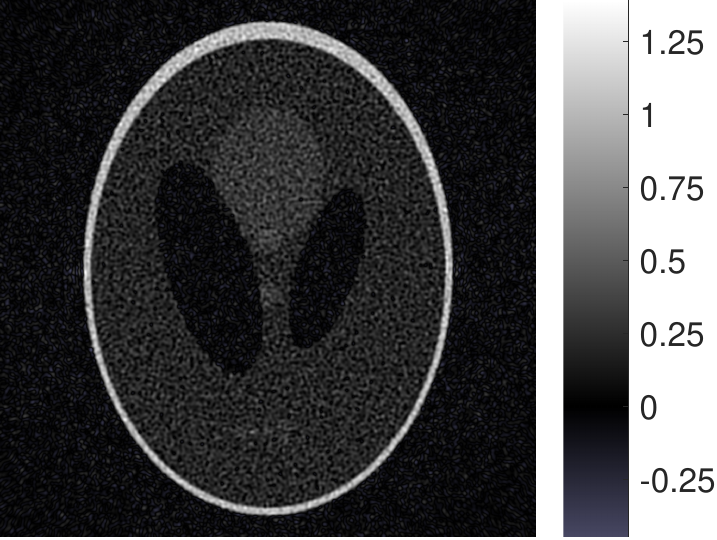}}
\hfill
\subfigure[Cosine]{\includegraphics[height=3.85cm]{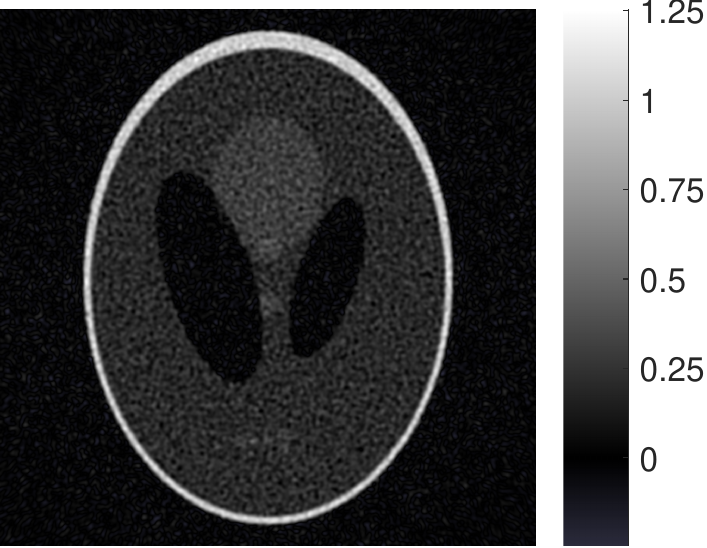}}
\hfill
\subfigure[Hamming]{\includegraphics[height=3.85cm]{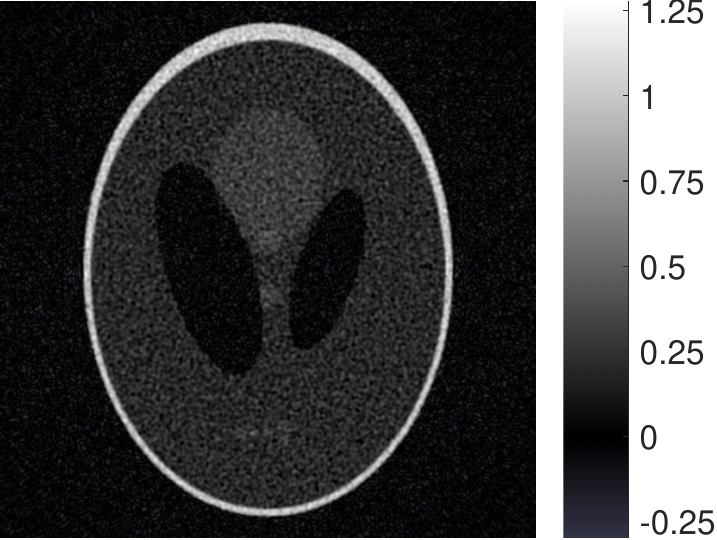}}
\hfill
\subfigure[MR-FBP]{\includegraphics[height=3.85cm]{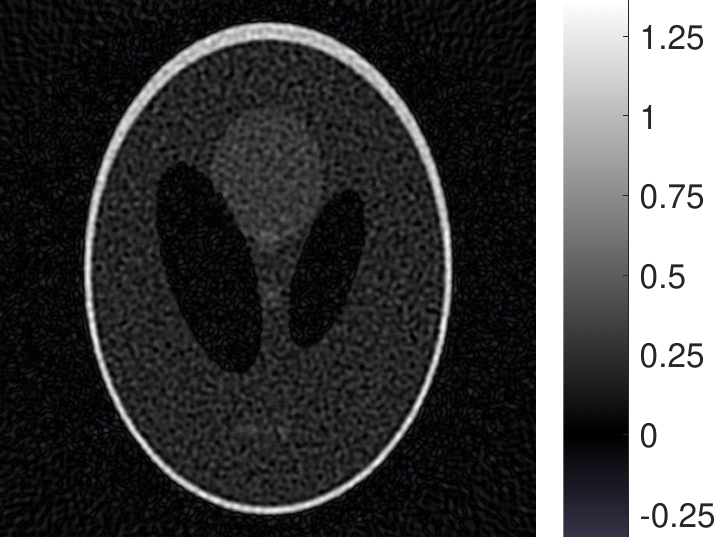}}
\hfill
\subfigure[MR-FBP$_\mathrm{GM}$]{\includegraphics[height=3.85cm]{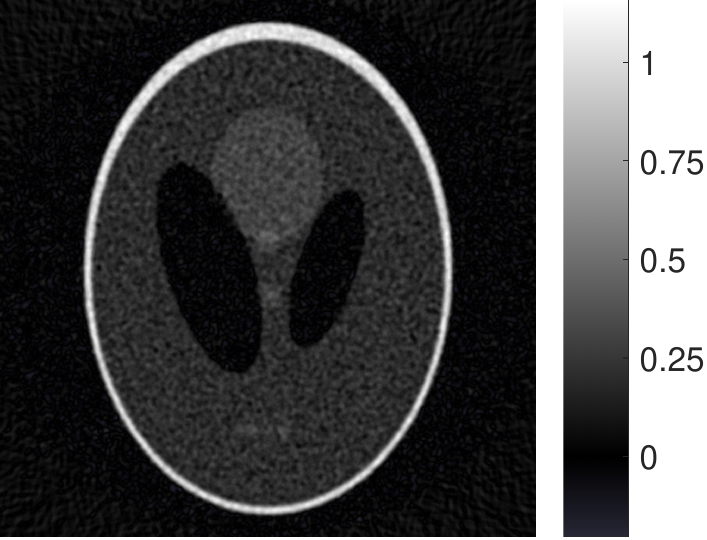}}
\hfill
\subfigure[SIRT-FBP]{\includegraphics[height=3.85cm]{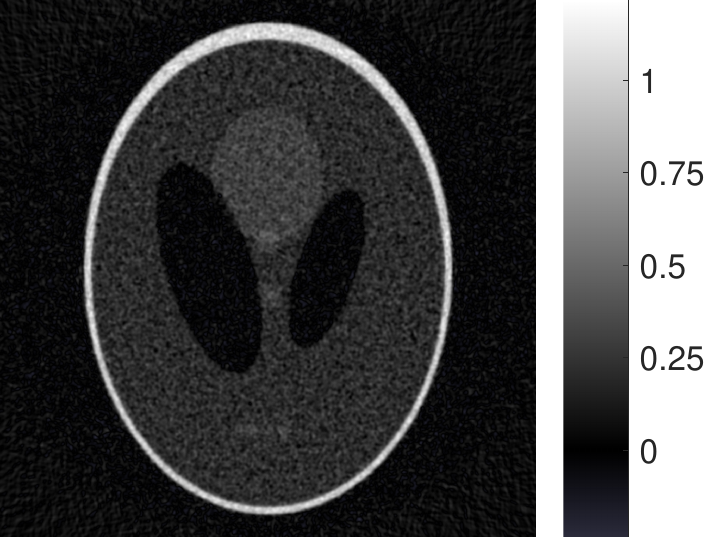}}
\hfill
\subfigure[ERM-FBP]{\includegraphics[height=3.985cm]{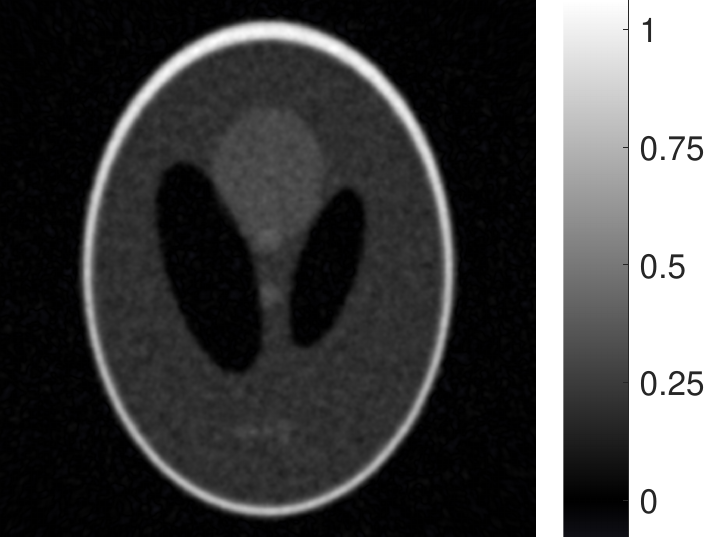}}
\hfill
\subfigure[$A_{L,D}^\varepsilon$]{\includegraphics[height=3.85cm]{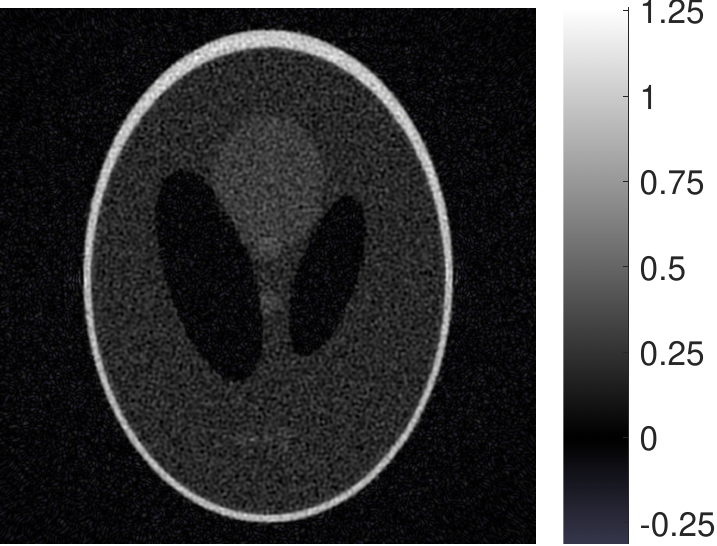}}
\hfill
\subfigure[$\hat{A}_{L,D}^\varepsilon$]{\includegraphics[height=3.85cm]{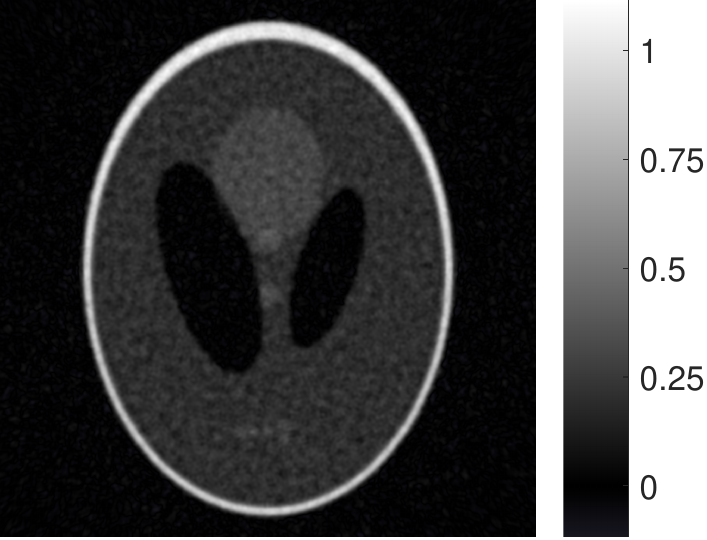}}
\hfill
\subfigure[$A_{L,D}^\ast$]{\includegraphics[height=3.85cm]{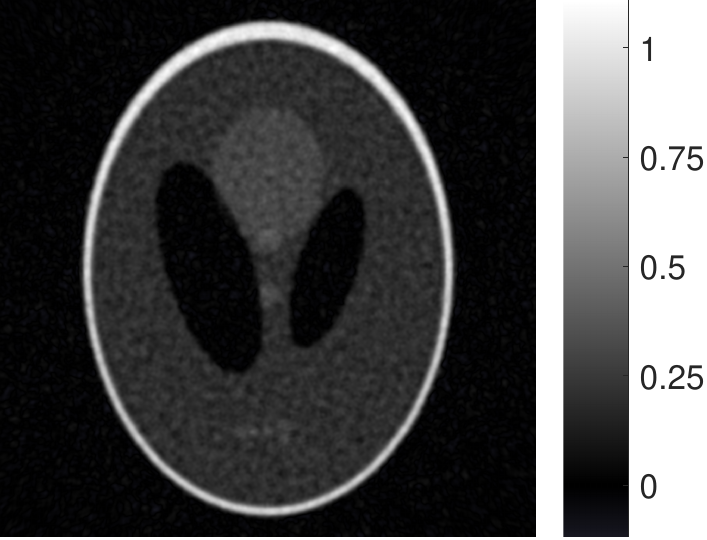}}
\caption{Reconstructions of Shepp-Logan phantom from noisy Radon data ($p_\mathrm{noise}=0.1$, $N_\varphi = 360$).}
\label{fig:shepp_logan_recon}
\end{figure}

The results of our numerical experiments are depicted in Figure~\ref{fig:errors_shepp_logan_noise}, where we plot the mean squared error (MSE) in logarithmic scale and the structural similarity index measure (SSIM) from~\cite{Wang2004} of the FBP reconstruction on an equidistant grid of $1024 \times 1024$ pixels as a function of the number of angles $N_\varphi \in \{90, 180, 270, 360, 450, 540, 630, 720\}$ for various filter functions.
To account for the randomness, we computed averages over $50$ reconstructions with different noise realizations.
Involved filter parameters were optimized based on $20$ independent samples.

In terms of MSE, we observe that for the classical filters used (Ram-Lak, Shepp-Logan, Cosine, Hamming) the error first decreases, but then increases again with increasing $N_\varphi$.
In contrast to this, for the MR-FBP$_\mathrm{GM}$ method from~\cite{Pelt2014}, the SIRT-FBP method from~\cite{Pelt2015} and the filter from~\cite{Kabri2024}, which we refer to as ERM-FBP method, as well as for our proposed filter functions $A_{L,D}^\ast$ and $\hat{A}_{L,D}^\varepsilon$, the error continues to decrease for all chosen $N_\varphi$.
The MR-FBP method from~\cite{Pelt2014} and our simple workaround $A_{L,D}^\varepsilon$ give decent results for small noise, but also show an increasing error for increasing $N_\varphi$ already for moderate noise.
In total, we observe that our filters $A_{L,D}^\ast$ and $\hat{A}_{L,D}^\varepsilon$ perform best, which was expected since the MSE is the discrete counterpart of the objective we optimized in the derivation of our filter function. Moreover, we see that the difference in performance increases with increasing $p_{\mathrm{noise}} \in \{0.05, \, 0.1, \, 0.15\}$.
For illustration, Figure~\ref{fig:optimized_filter_shepp_logan} shows $A_{L,D}^\ast$, $A_{L,D}^\varepsilon$ and $\hat{A}_{L,D}^\varepsilon$ for the noisy Shepp-Logan sinogram with $N_\varphi \in \{90, 180, 360\}$ and $p_{\mathrm{noise}} \in \{0.05, \, 0.1, \, 0.15\}$.

\begin{figure}[t]
\centering
\includegraphics[height=0.5cm]{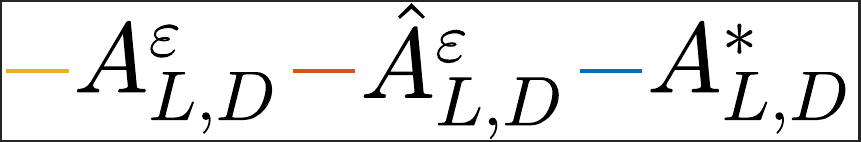}\\
\subfigure[$p_\mathrm{noise}=0.05$, $N_\varphi = 90$]{\includegraphics[width=.31\textwidth]{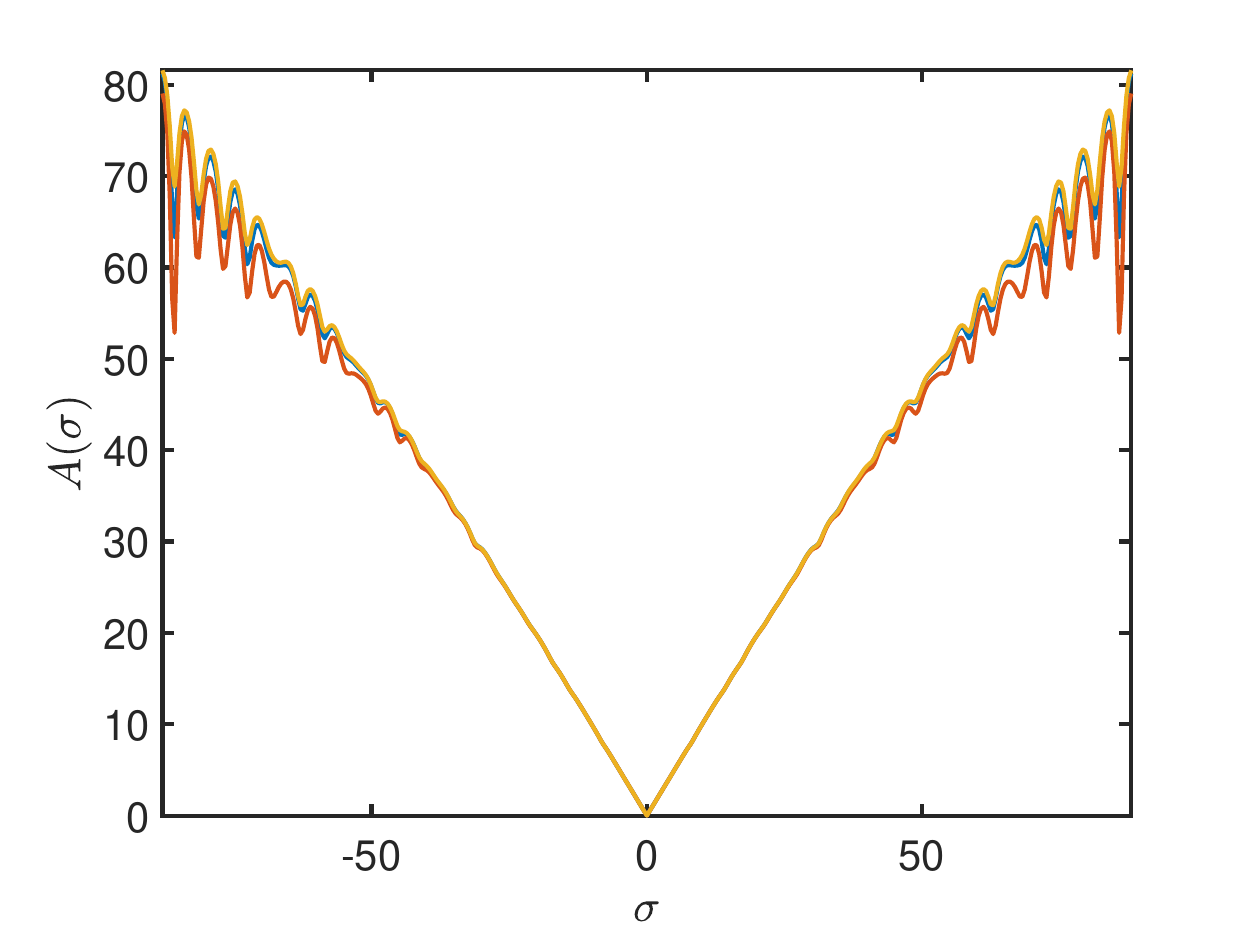}}
\hfill
\subfigure[$p_\mathrm{noise}=0.05$, $N_\varphi = 180$]{\includegraphics[width=.31\textwidth]{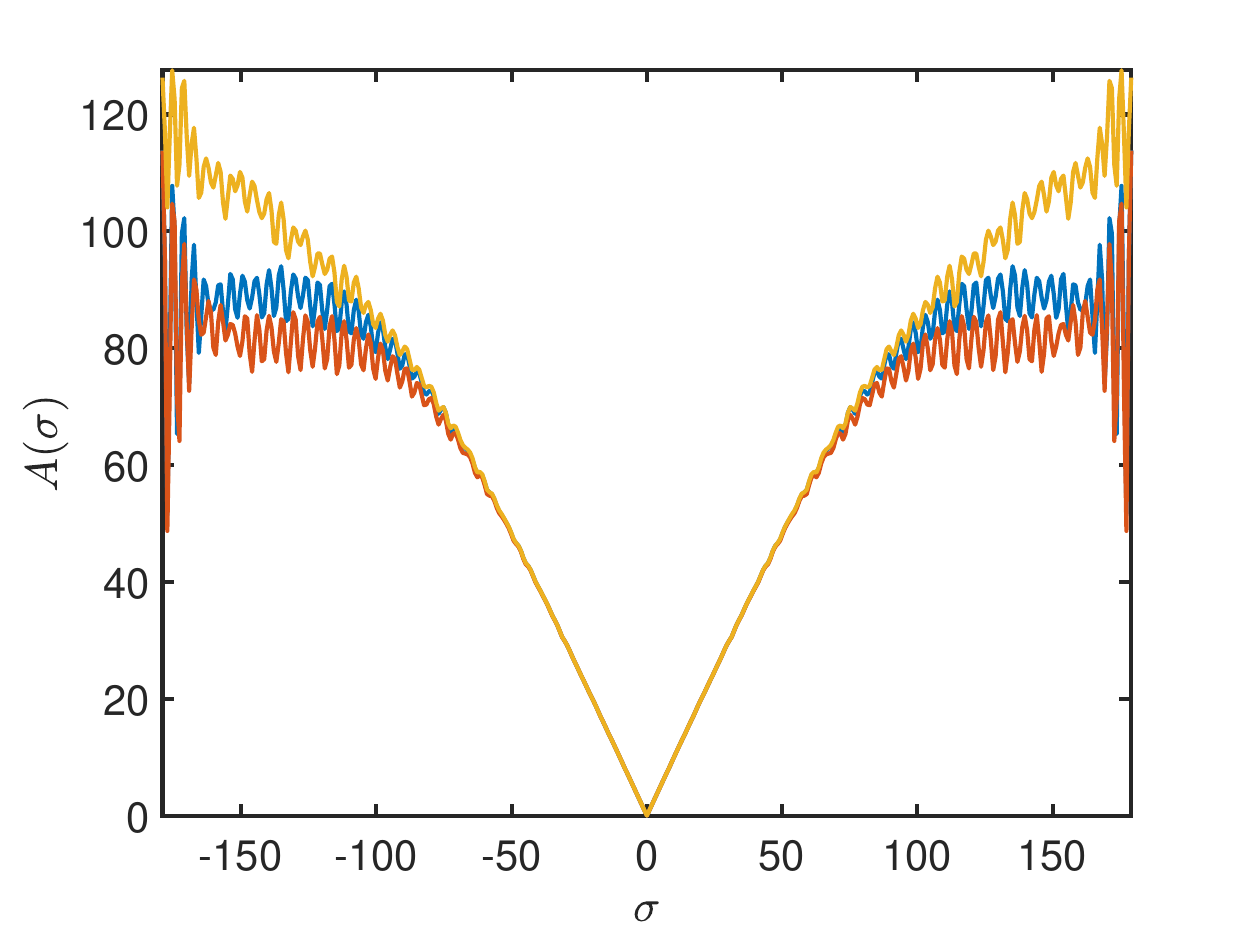}}
\hfill
\subfigure[$p_\mathrm{noise}=0.05$, $N_\varphi = 360$]{\includegraphics[width=.31\textwidth]{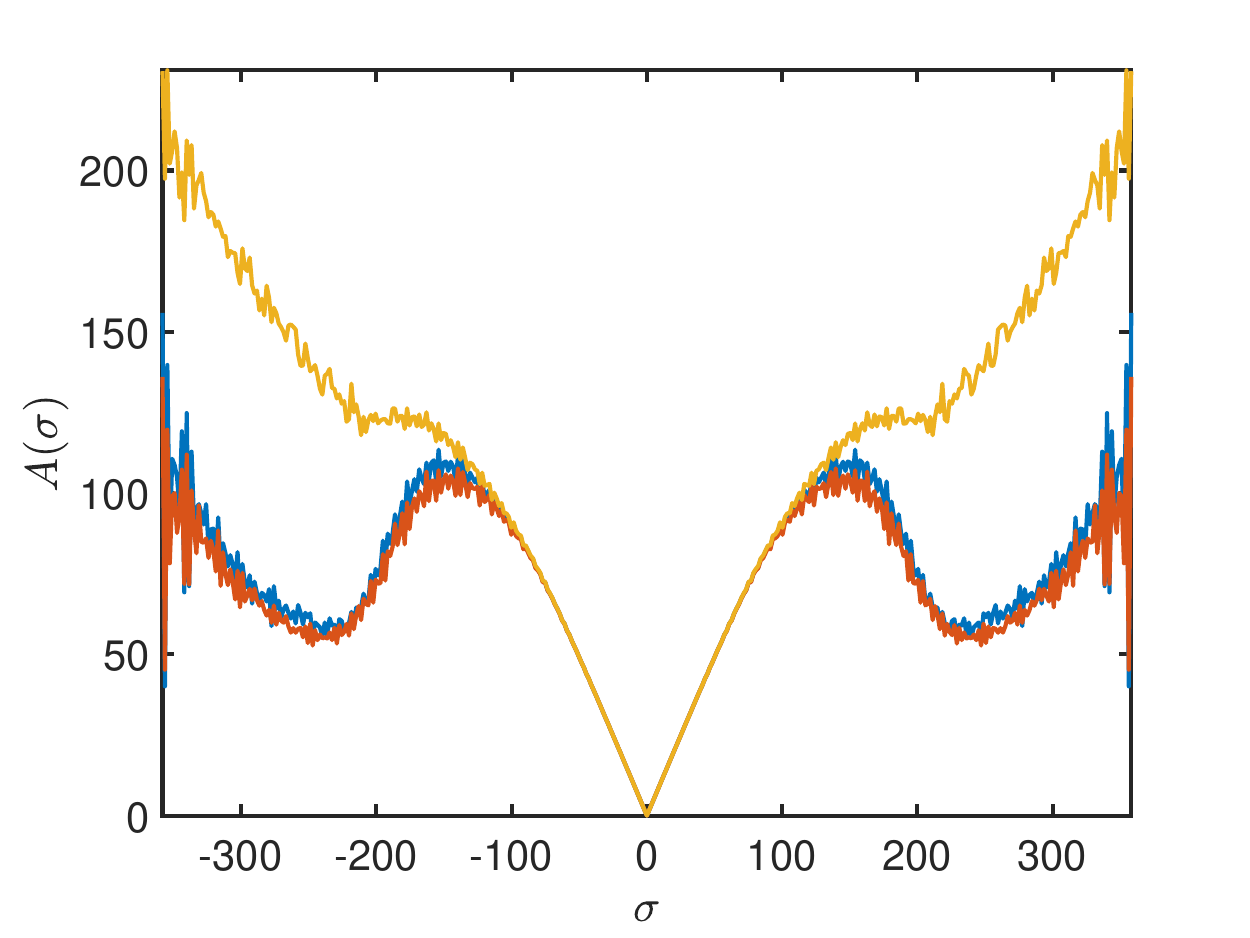}}
\hfill
\subfigure[$p_\mathrm{noise}=0.1$, $N_\varphi = 90$]{\includegraphics[width=.31\textwidth]{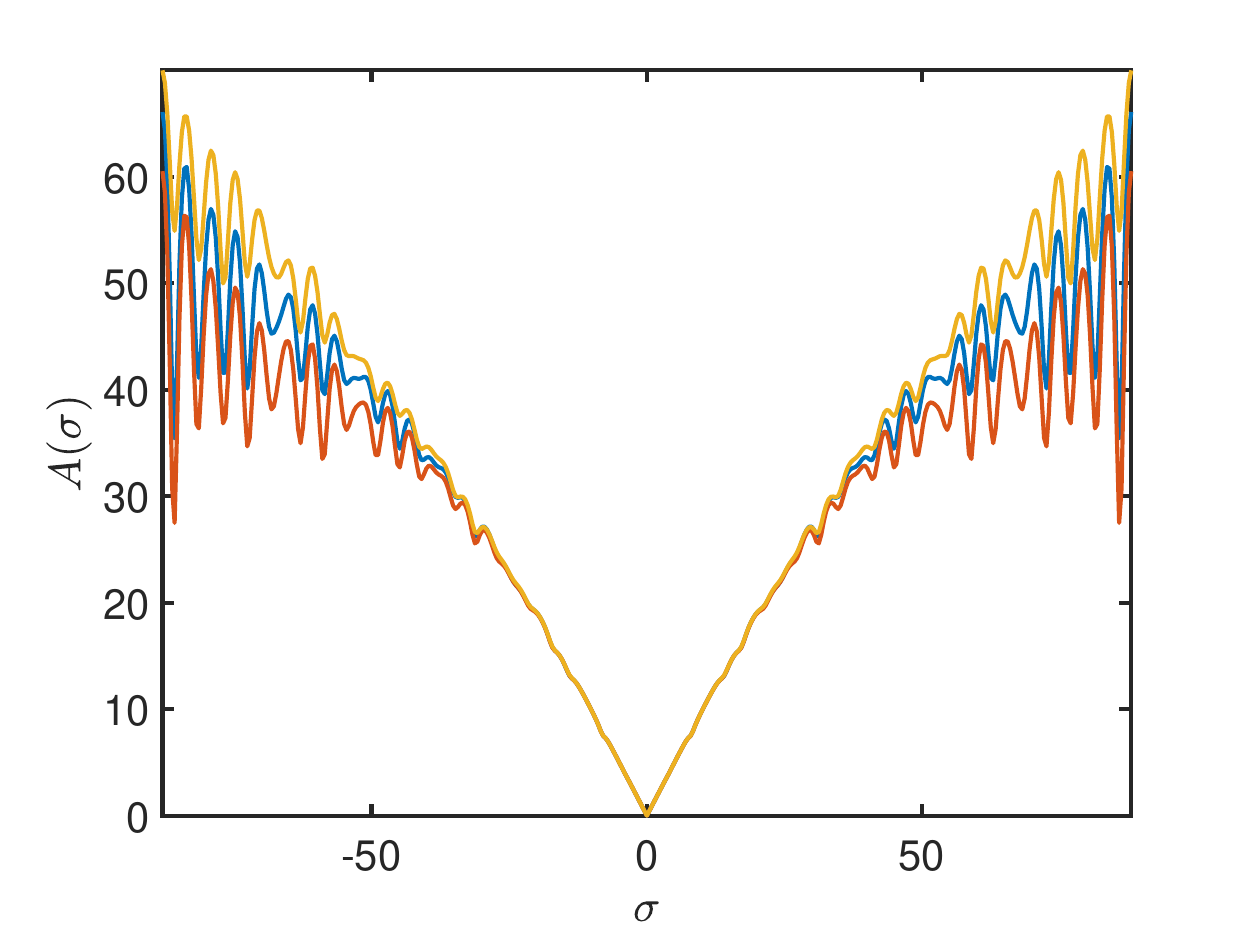}}
\hfill
\subfigure[$p_\mathrm{noise}=0.1$, $N_\varphi = 180$]{\includegraphics[width=.31\textwidth]{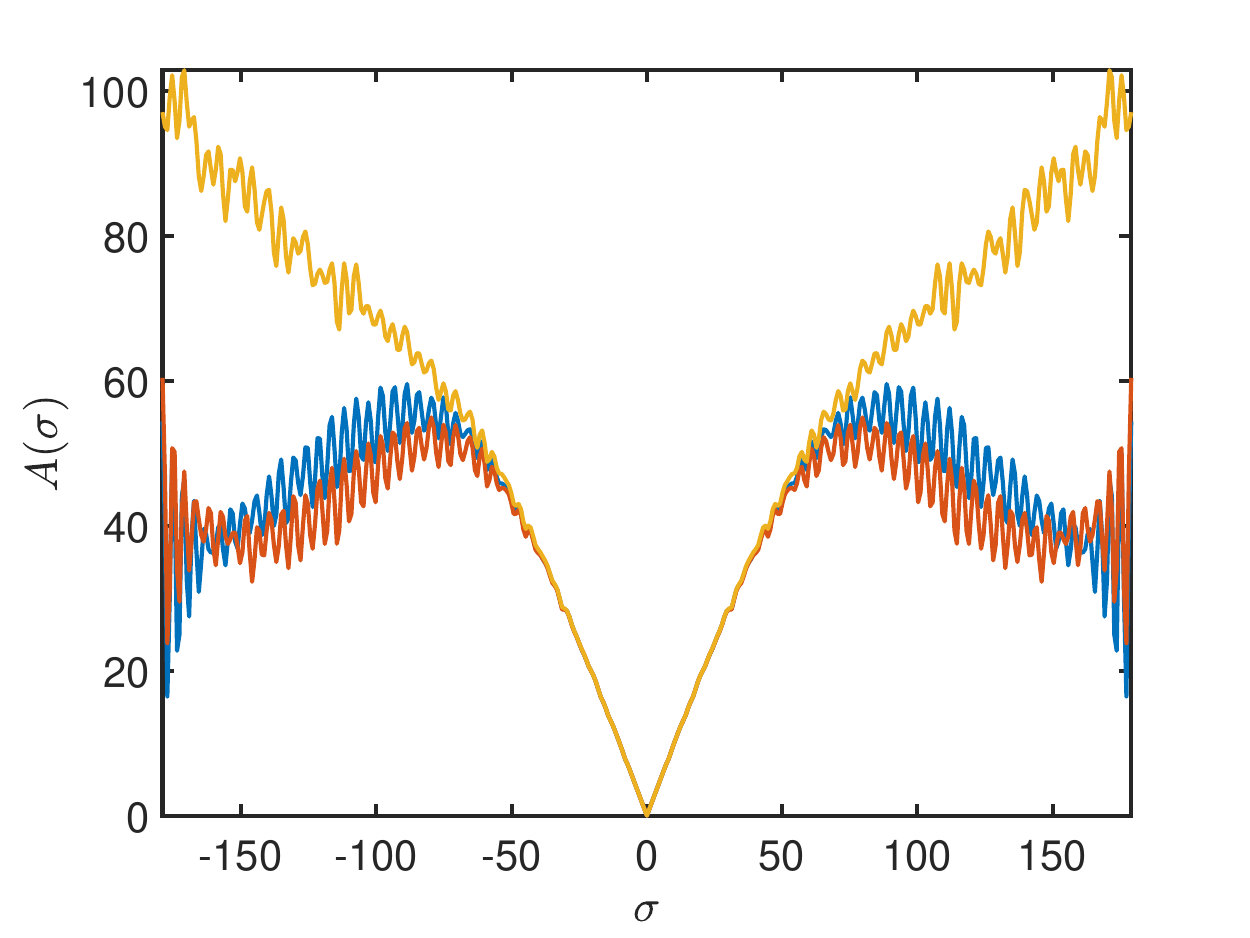}}
\hfill
\subfigure[$p_\mathrm{noise}=0.1$, $N_\varphi = 360$]{\includegraphics[width=.31\textwidth]{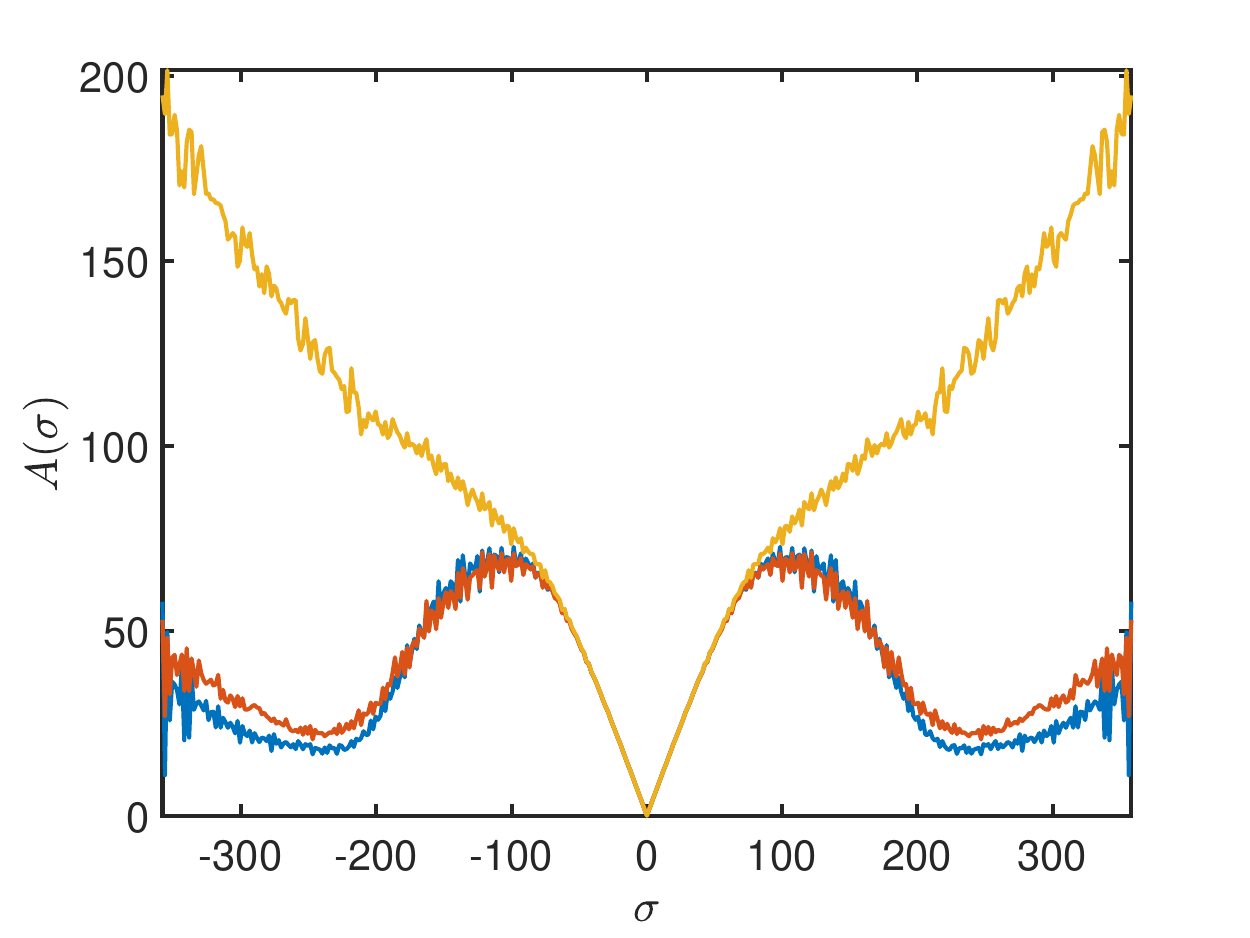}}
\hfill
\subfigure[$p_\mathrm{noise}=0.15$, $N_\varphi = 90$]{\includegraphics[width=.31\textwidth]{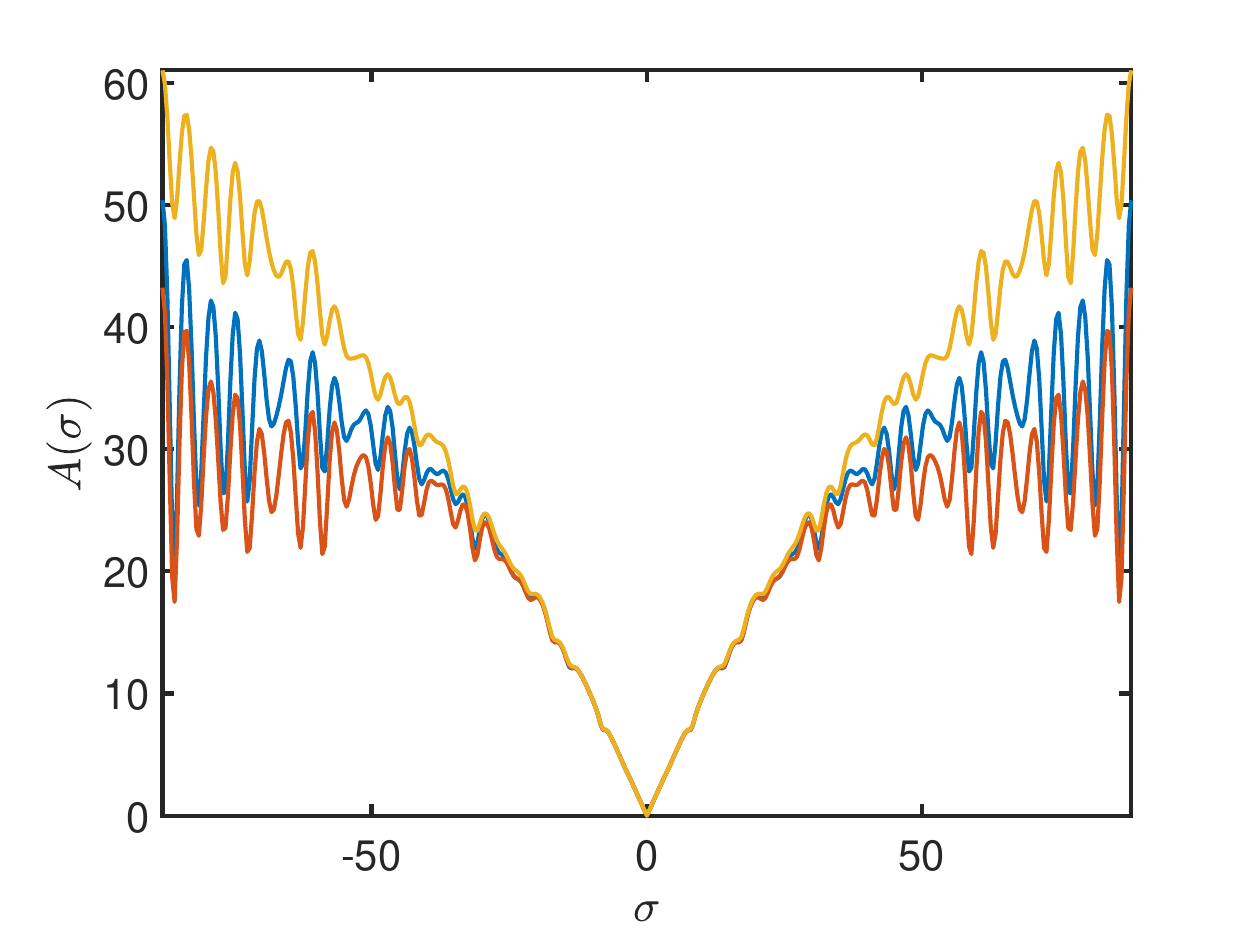}}
\hfill
\subfigure[$p_\mathrm{noise}=0.15$, $N_\varphi = 180$]{\includegraphics[width=.31\textwidth]{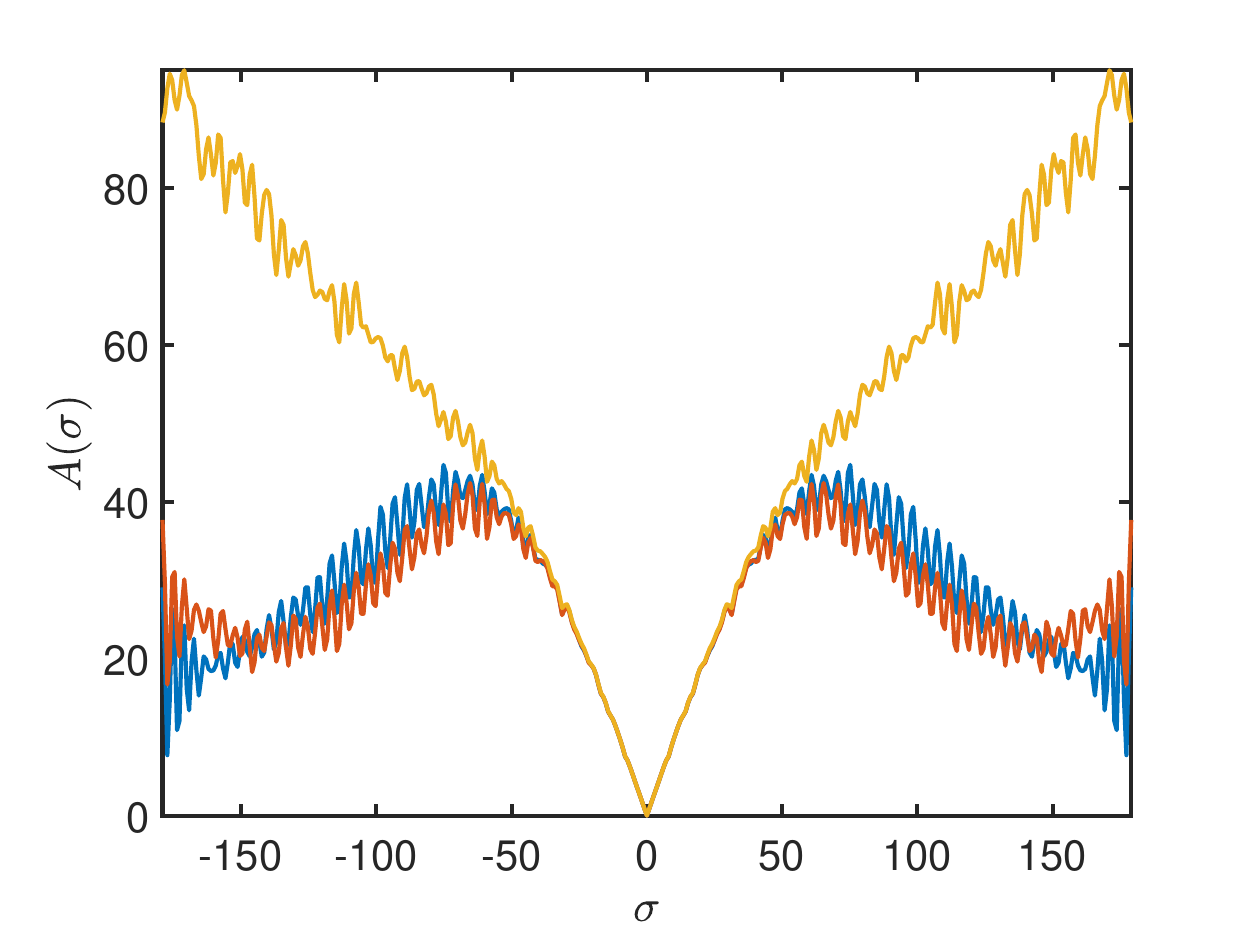}}
\hfill
\subfigure[$p_\mathrm{noise}=0.15$, $N_\varphi = 360$]{\includegraphics[width=.31\textwidth]{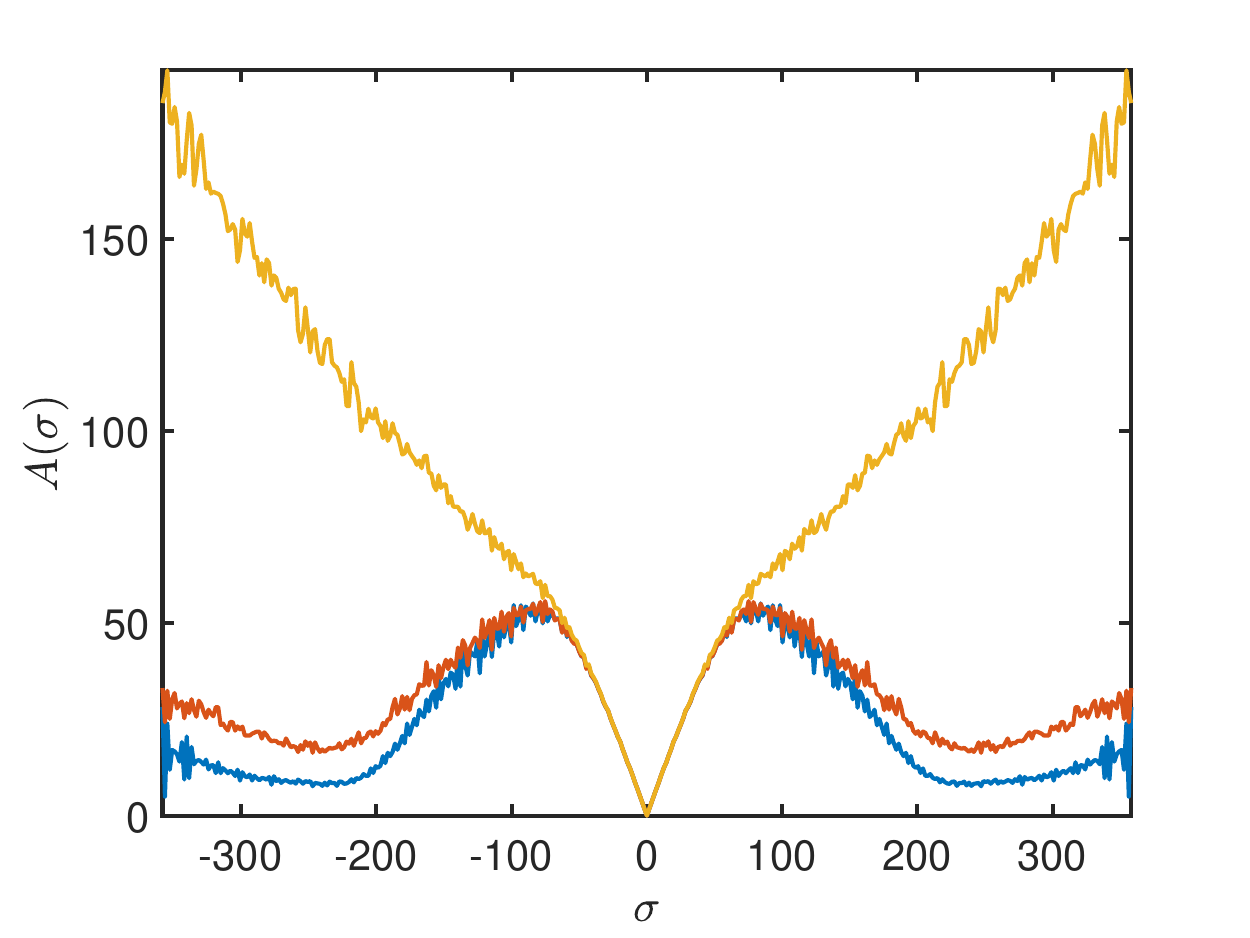}}
\caption{Plots of the optimized filter functions $A_{L,D}^\ast$, $A_{L,D}^\varepsilon$ and $\hat{A}_{L,D}^\varepsilon$ for the noisy Shepp-Logan sinogram with $N_\varphi \in \{90, 180, 360\}$ and $p_{\mathrm{noise}} \in \{0.05, \, 0.1, \, 0.15\}$.}
\label{fig:optimized_filter_shepp_logan}
\end{figure}

In terms of SSIM, we observe that for the classical filters, our simple workaround $A_{L,D}^\varepsilon$, the MR-FBP and MR-FBP$_\mathrm{GM}$ method as well as the SIRT-FBP method the reconstruction quality decreases with increasing $N_\varphi$.
For the ERM-FBP method as well as our optimized filter $A_{L,D}^\ast$, however, the SSIM increases with increasing $N_\varphi$.
For its variant $\hat{A}_{L,D}^\varepsilon$ the behaviour of the SSIM depends on the amount of noise. For small noise, the reconstruction quality increases with increasing $N_\varphi$, but for moderate and high noise, the SSIM begins to decrease again for large values of $N_\varphi$.
In total, we observe that now ERM-FBP performs best, closely followed by our filters $A_{L,D}^\ast$ and $\hat{A}_{L,D}^\varepsilon$.
Note, however, that ERM-FBP requires access to training data comprising noise-free sinograms and noise samples, whereas our filters can be used off the shelf.
As proposed in~\cite{Kabri2024}, in our simulations the ERM-FBP filter was trained on $10000$ random ellipse phantoms and different realizations of noise.
For illustration, Figure~\ref{fig:shepp_logan_recon} shows the different reconstructions from noisy Radon data with $p_\mathrm{noise}=0.1$ and $N_\varphi = 360$.

\subsection{Modified Shepp-Logan phantom}

To study the generalization abilities of the filters, in our second set of numerical simulations we consider a modified version of the Shepp-Logan phantom comprising smoother as well as rougher components, see Figure~\ref{fig:phantoms}~(c)-(d).
To be more precise, we replace the characteristic function
\begin{equation*}
\chi_{B_1(0)}(x,y) = \begin{cases}
1 & \text{if } x^2+y^2 \leq 1\\
0 & \text{if } x^2 + y^2 > 1
\end{cases}
\end{equation*}
in the definition of the classical Shepp-Logan phantom by
\begin{equation*}
p_\nu(x,y) = \begin{cases}
(1 - x^2 - y^2)^\nu & \text{if } x^2+y^2 \leq 1\\
0 & \text{if } x^2 + y^2 > 1
\end{cases}
\end{equation*}
with smoothness parameter $\nu = 1.5$ and add two rectangles of different sizes.
Thereon, we rescale the resulting attenuation function $f_{\mathrm{MSL}} \in \L^2(\R^2)$ with $\supp(f_{\mathrm{MSL}}) \subseteq B_1(0)$ so that its Radon transform $\Radon f_{\mathrm{MSL}}$ has the same arithmetic mean as $\Radon f_{\mathrm{MSL}}$, i.e., $m_{\Radon f_{\mathrm{MSL}}} = m_{\Radon f_{\mathrm{SL}}}$. 

To investigate the effect of underestimating the amount of noise, the noisy measurements $g_D^\varepsilon(i,j)$ are simulated by adding white Gaussian noise with variance $\varepsilon^2$ to $\Radon f_{\mathrm{MSL}}(s_i,\varphi_j)$, where $\varepsilon$ is $20\%$ larger than expected, i.e., $\varepsilon = 1.2 \cdot p_{\mathrm{noise}} \, m_{\Radon f_{\mathrm{MSL}}}$ with $p_{\mathrm{noise}} \in \{0.05, \, 0.1, \, 0.15\}$ and we omit the factor $1.2$ in the definition of our proposed filters $A_{L,D}^\ast$, $\hat{A}_{L,D}^\varepsilon$ and $A_{L,D}^\varepsilon$.

\begin{figure}[p]
\centering
\includegraphics[height=1cm]{Results_legend.pdf}\\
\subfigure[$p_\mathrm{noise}=0.05$]{%
\includegraphics[width=.5\textwidth, viewport=50 25 1075 675]{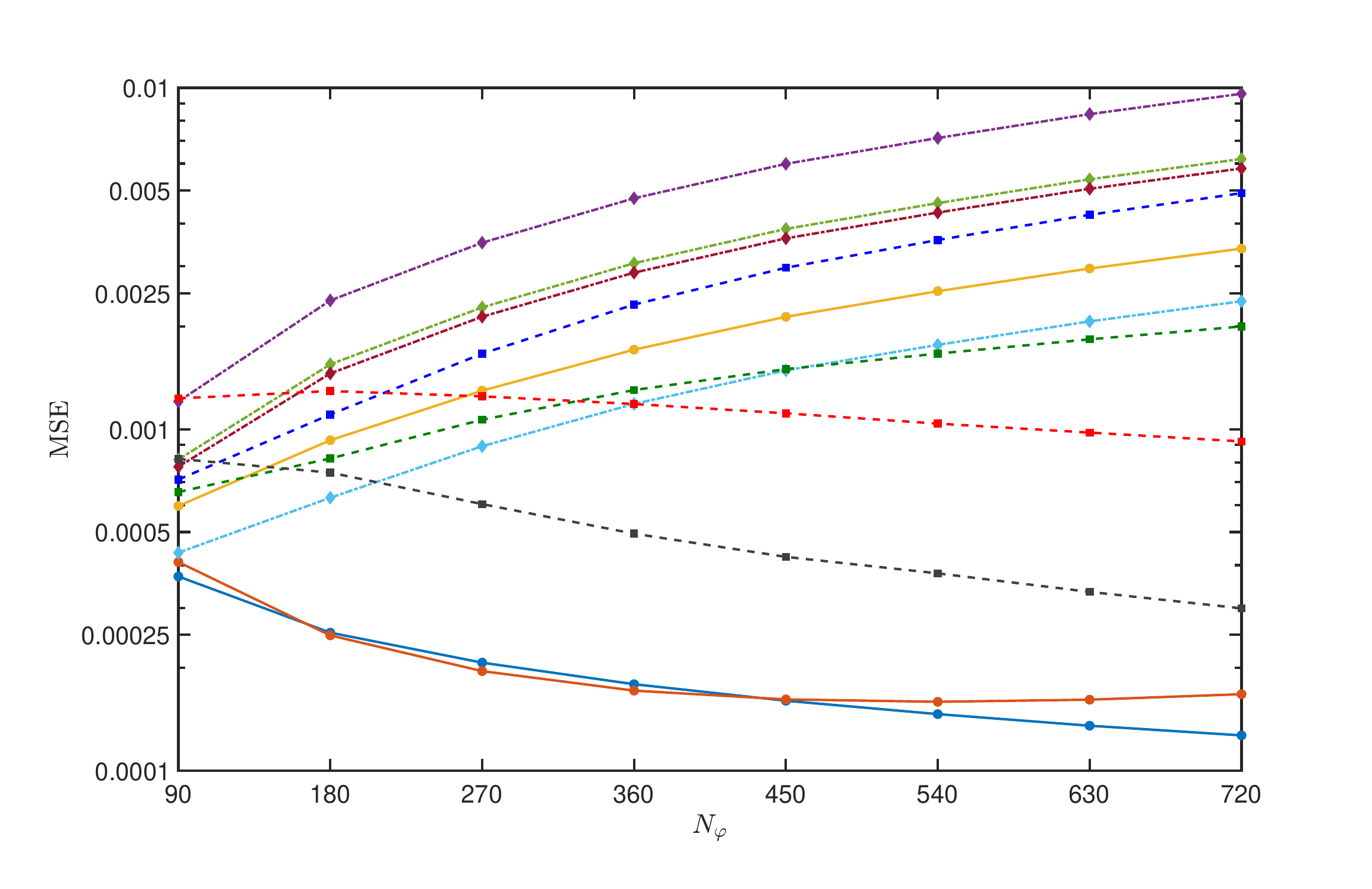}%
\includegraphics[width=.5\textwidth, viewport=50 25 1075 675]{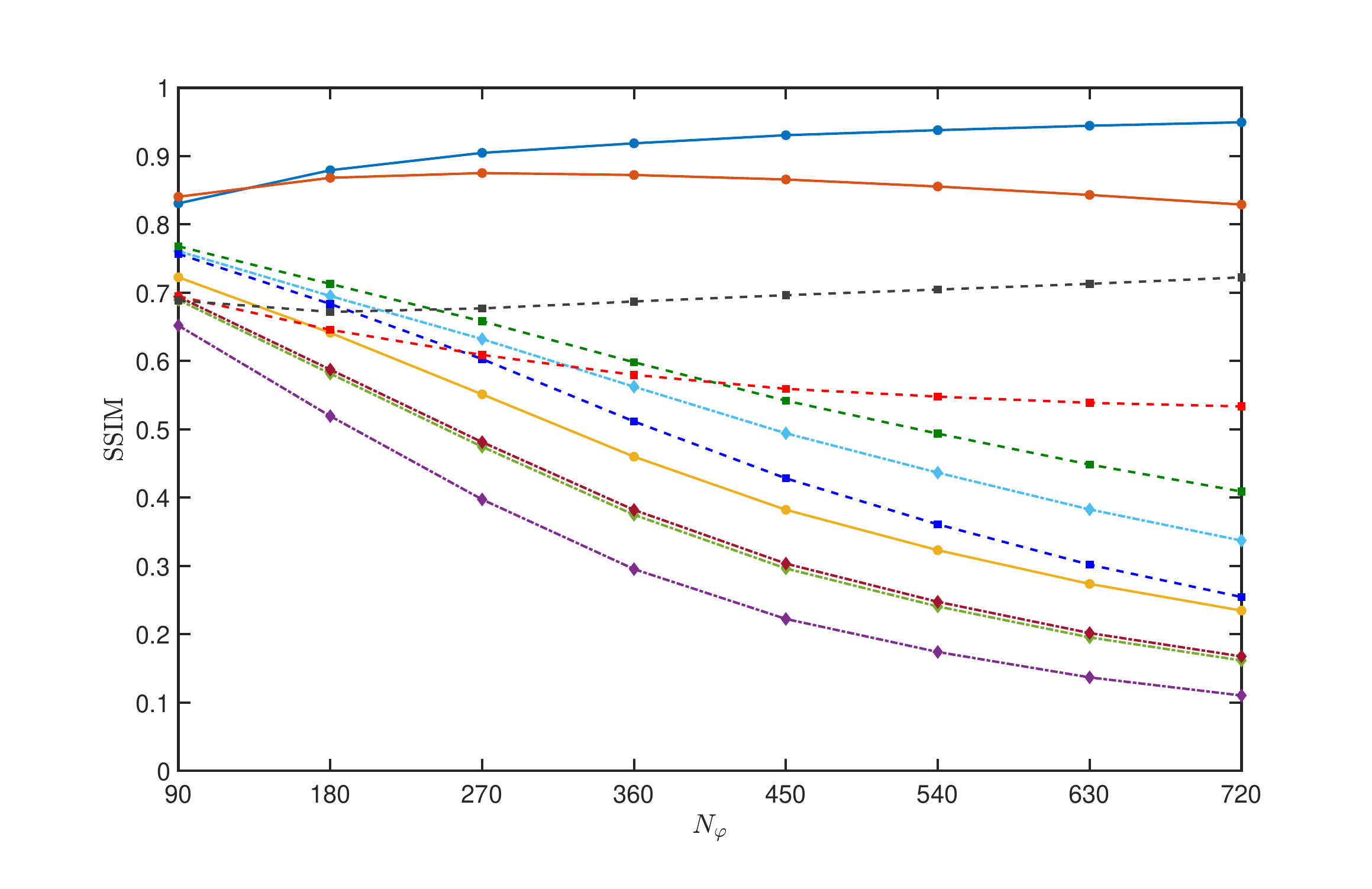}}
\hfill
\subfigure[$p_\mathrm{noise}=0.1$]{%
\includegraphics[width=.5\textwidth, viewport=50 25 1075 675]{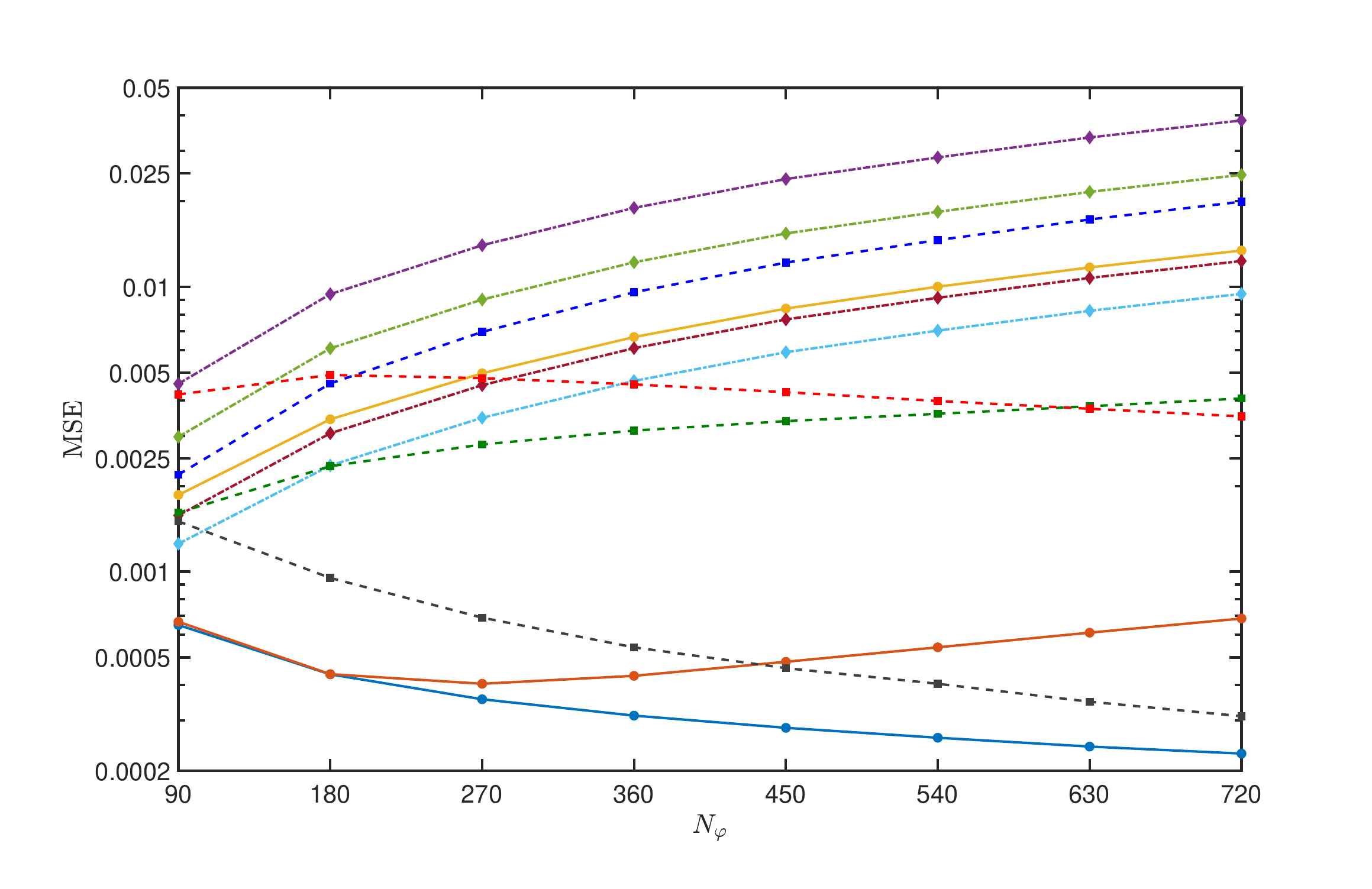}%
\includegraphics[width=.5\textwidth, viewport=50 25 1075 675]{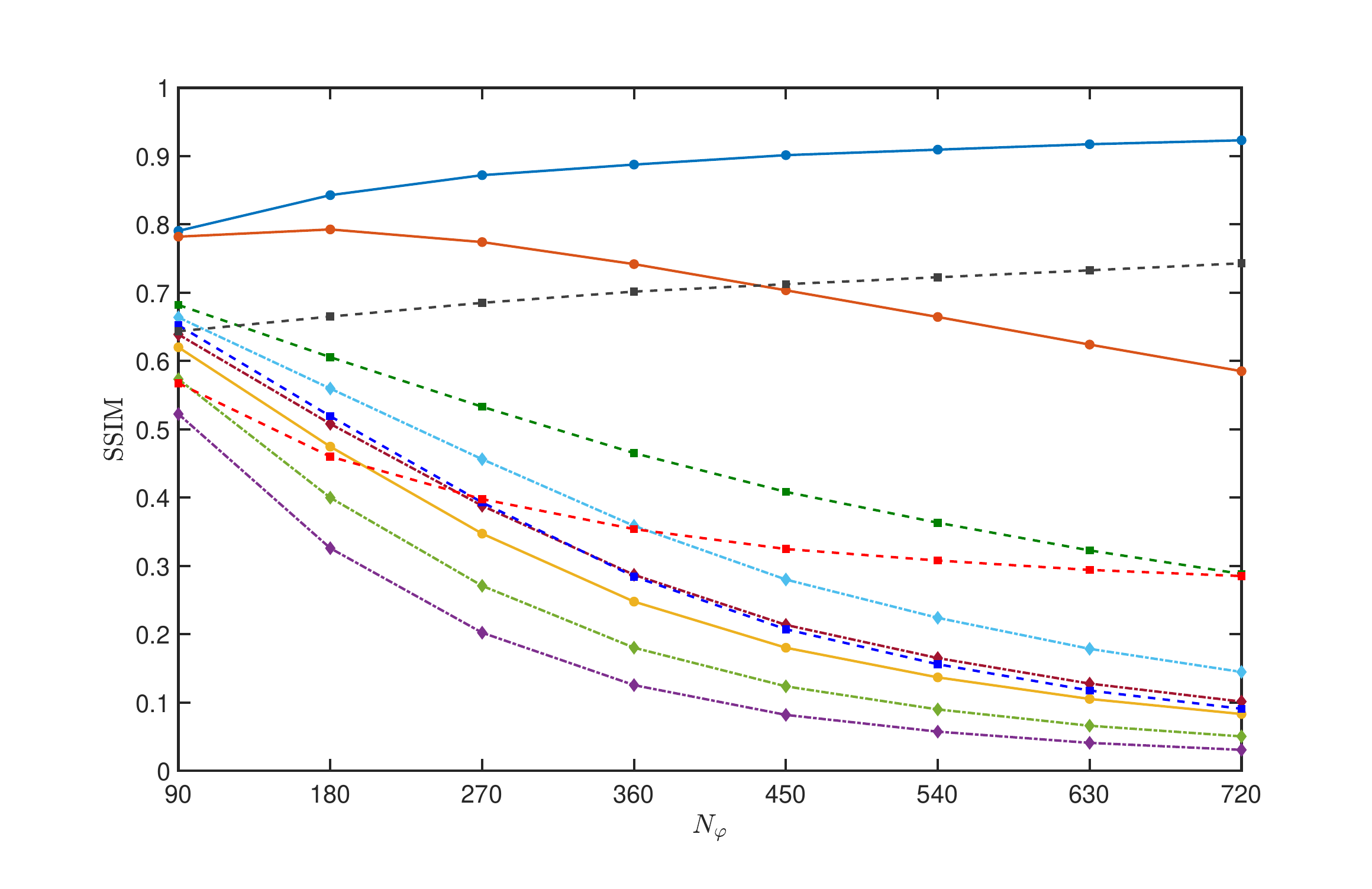}}
\hfill
\subfigure[$p_\mathrm{noise}=0.15$]{%
\includegraphics[width=.5\textwidth, viewport=50 25 1075 675]{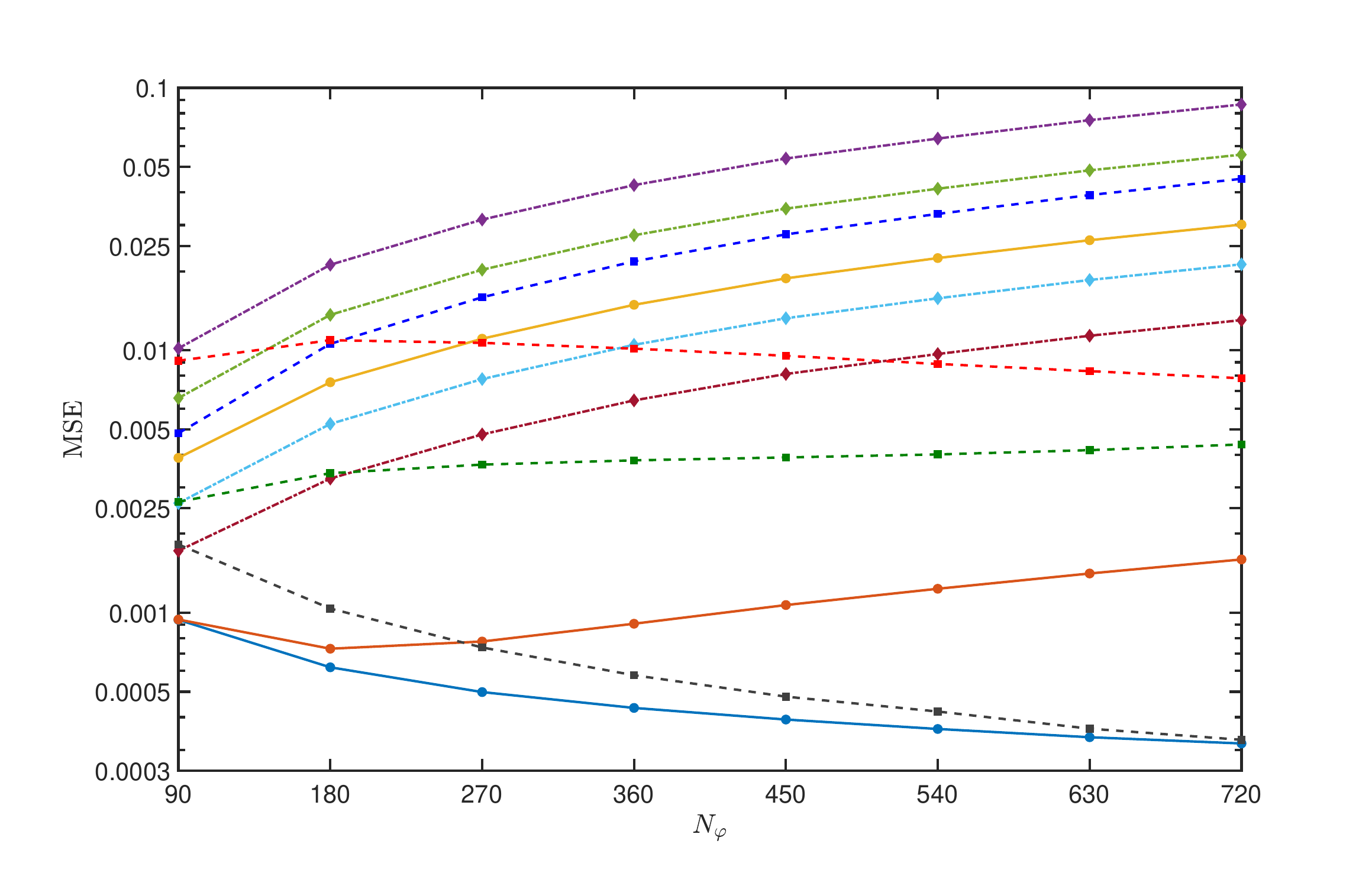}%
\includegraphics[width=.5\textwidth, viewport=50 25 1075 675]{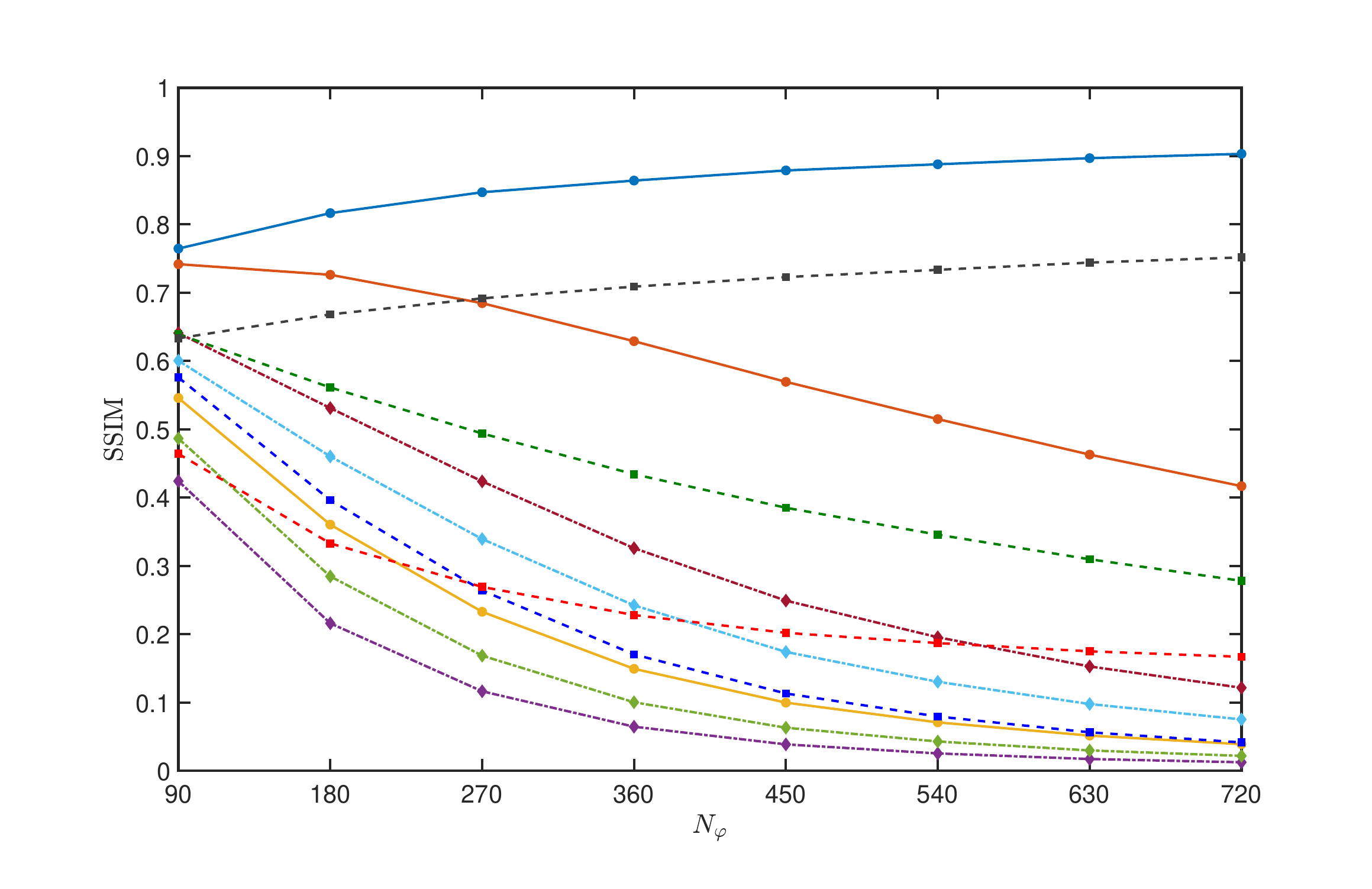}}
\caption{Plots of the MSE and SSIM of FBP reconstructions for the modified Shepp-Logan phantom.}
\label{fig:errors_modified_shepp_logan_noise}
\end{figure}

\begin{figure}[p]
\centering
\subfigure[Ground truth]{\includegraphics[height=3.85cm]{SmoothRectangle_phantom.pdf}}
\hfill
\subfigure[Ram-Lak]{\includegraphics[height=3.85cm]{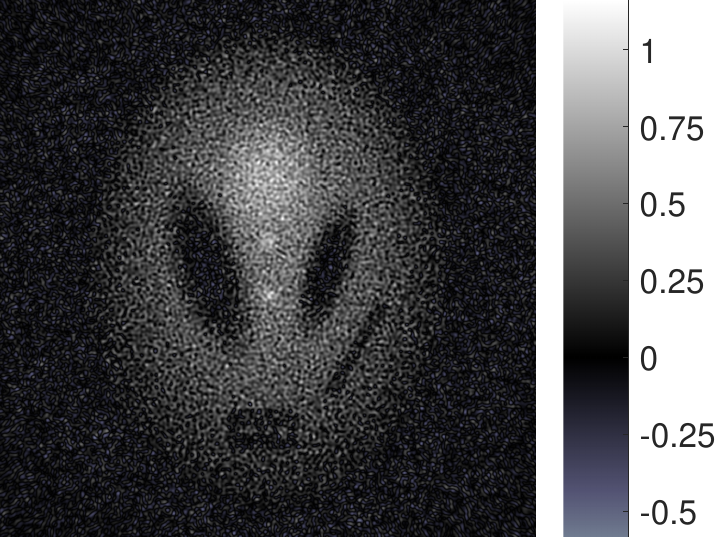}}
\hfill
\subfigure[Shepp-Logan]{\includegraphics[height=3.85cm]{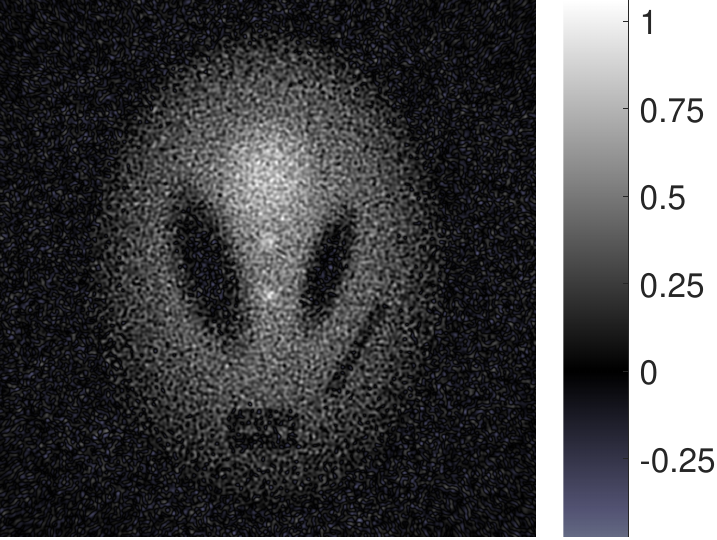}}
\hfill
\subfigure[Cosine]{\includegraphics[height=3.85cm]{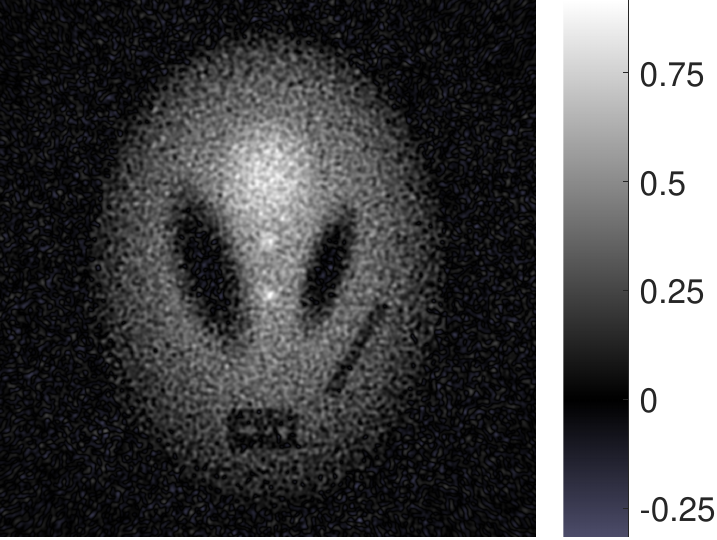}}
\hfill
\subfigure[Hamming]{\includegraphics[height=3.85cm]{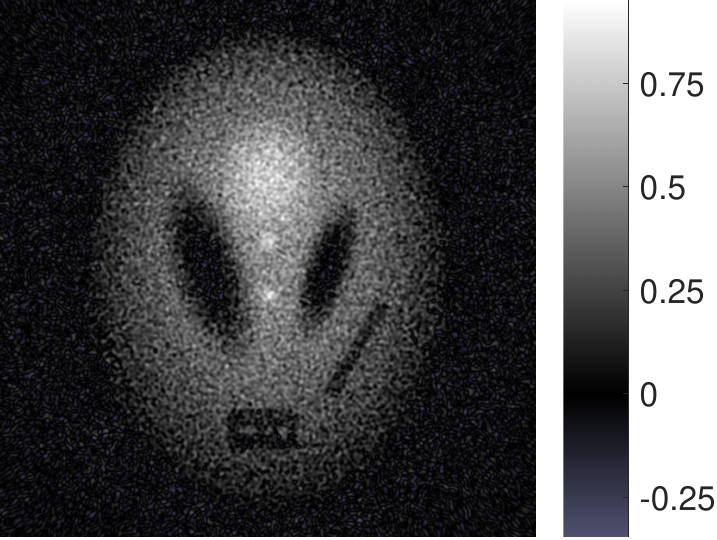}}
\hfill
\subfigure[MR-FBP]{\includegraphics[height=3.85cm]{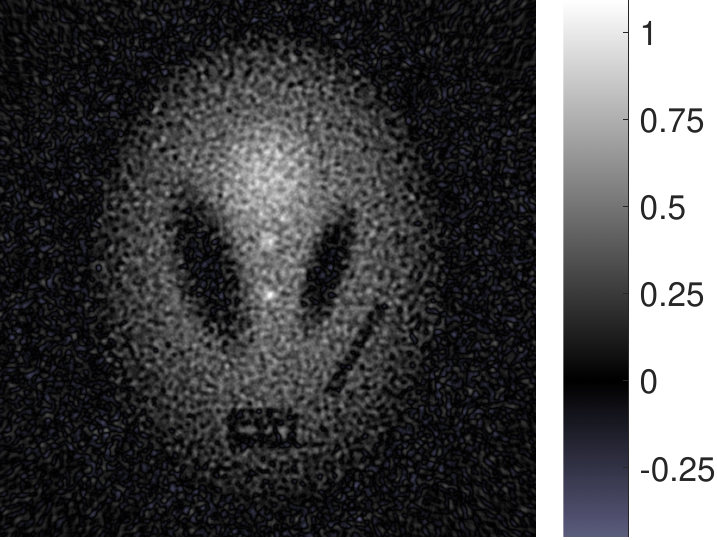}}
\hfill
\subfigure[MR-FBP$_\mathrm{GM}$]{\includegraphics[height=3.85cm]{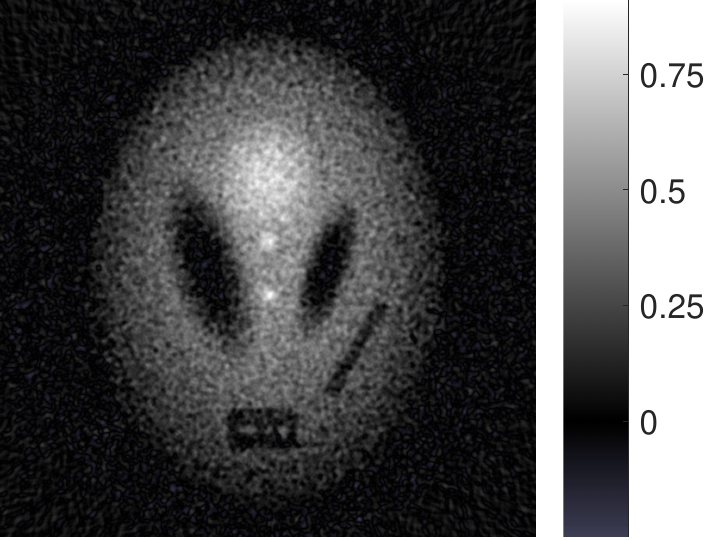}}
\hfill
\subfigure[SIRT-FBP]{\includegraphics[height=3.85cm]{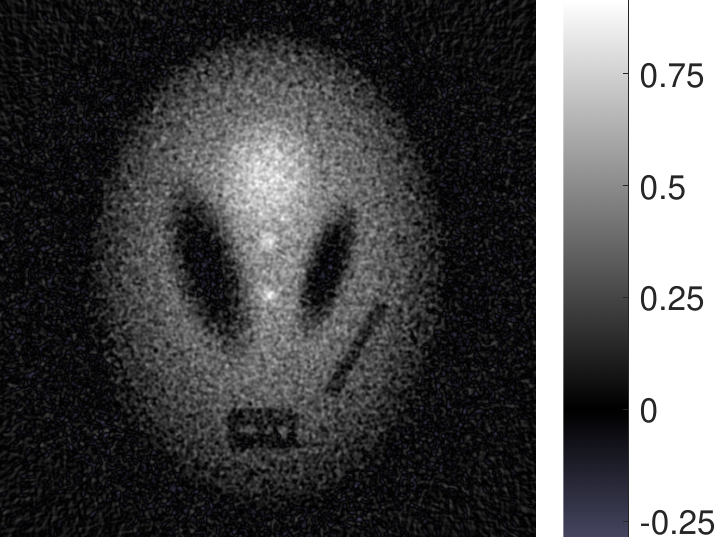}}
\hfill
\subfigure[ERM-FBP]{\includegraphics[height=3.85cm]{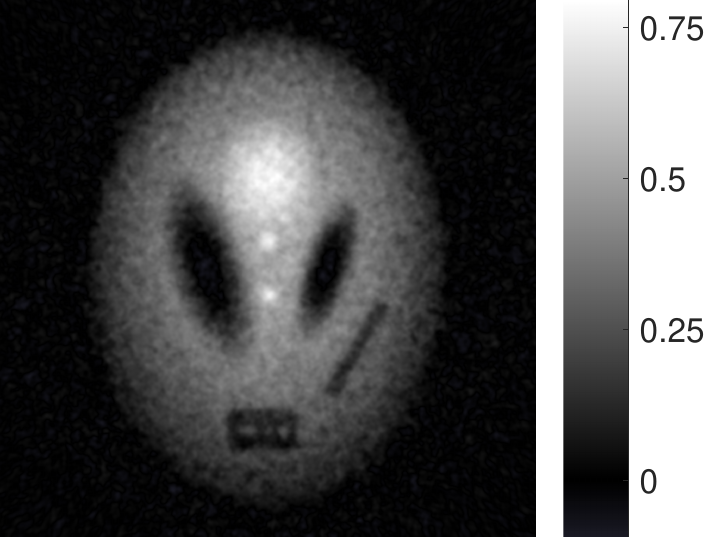}}
\hfill
\subfigure[$A_{L,D}^\varepsilon$]{\includegraphics[height=3.85cm]{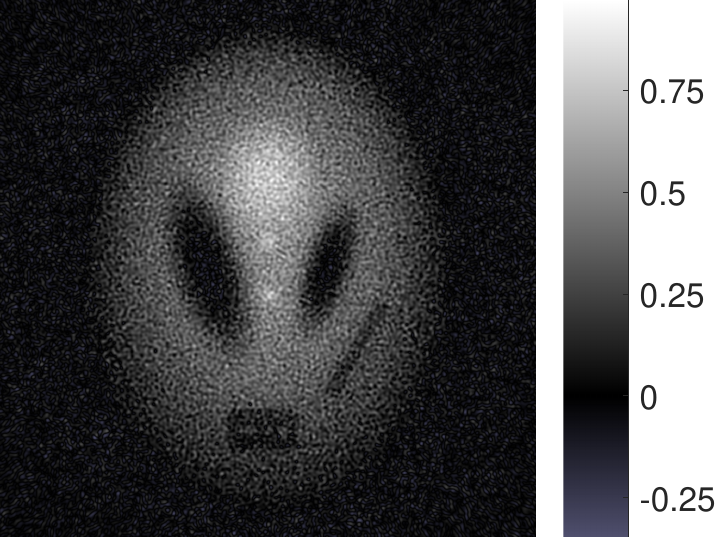}}
\hfill
\subfigure[$\hat{A}_{L,D}^\varepsilon$]{\includegraphics[height=3.85cm]{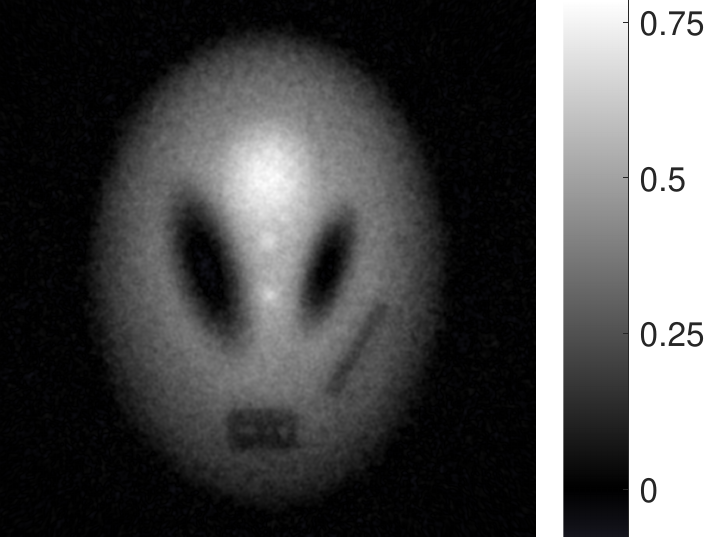}}
\hfill
\subfigure[$A_{L,D}^\ast$]{\includegraphics[height=3.85cm]{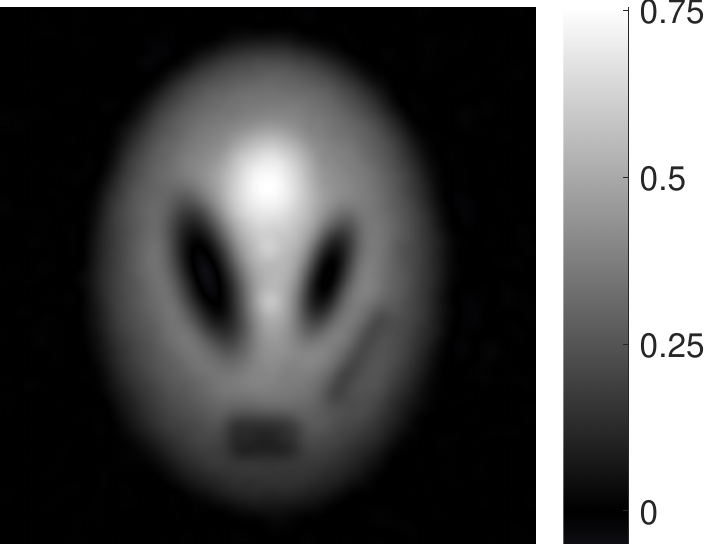}}
\caption{Reconstructions of modified Shepp-Logan phantom from noisy Radon data ($p_\mathrm{noise}=0.1$, $N_\varphi = 360$).}
\label{fig:modified_shepp_logan_recon}
\end{figure}

The results of our numerical experiments are depicted in Figure~\ref{fig:errors_modified_shepp_logan_noise}, where we use the same filter parameters as in the first set of experiments for the Shepp-Logan phantom in Section~\ref{sec:shepp_logan}.
In terms of MSE, we observe that for the classical filters, the MR-FBP method as well as the MR-FBP$_\mathrm{GM}$ method the error increases with increasing $N_\varphi$.
The same is true for our simple workaround $A_{L,D}^\varepsilon$, which shows that this choice cannot deal with underestimated noise levels.
For $\hat{A}_{L,D}^\varepsilon$ the behaviour depends on the amount of noise. For small noise, the MSE increases with increasing $N_\varphi$, but for moderate and high noise, the MSE begins to increase again for large values of $N_\varphi$.
The SIRT-FBP method only slightly decreases with increasing $N_\varphi$, whereas ERM-FBP and $A_{L,D}^\ast$ show the best results with $A_{L,D}^\ast$ outperforming ERM-FBP.
However, the difference in performance decreases with increasing $p_{\mathrm{noise}}$.

In terms of SSIM, we again observe that for the classical filters, our simple workaround $A_{L,D}^\varepsilon$, the MR-FBP and MR-FBP$_\mathrm{GM}$ method as well as the SIRT-FBP method the SSIM decreases with increasing $N_\varphi$.
Also for $\hat{A}_{L,D}^\varepsilon$ the reconstruction quality deteriorates for large values of $N_\varphi$.
In contrast to this, for the ERM-FBP method as well as our optimized filter $A_{L,D}^\ast$ the SSIM increases with increasing $N_\varphi$, where now $A_{L,D}^\ast$ performs best.
This suggest that $A_{L,D}^\ast$ is able to compensate for an inaccurately estimated noise level, while ERM-FBP has problems in reconstructing target functions very different from the training samples.

For illustration, Figure~\ref{fig:modified_shepp_logan_recon} shows the different reconstructions of the modified Shepp-Logan phantom from noisy Radon data with $p_\mathrm{noise} = 0.1$ and $N_\varphi = 360$.
Although $A_{L,D}^\ast$ performs best in terms of MSE and SSIM, we observe that while suppressing the noise very well, it smooths out fine details in the reconstruction.
As opposed to this, $\hat{A}_{L,D}^\varepsilon$ seems to provide a decent compromise between noise reduction and detail reconstruction, underpinning the potential of alternative denoising techniques for the noisy Radon samples $g_D^\varepsilon(i,j)$ in the definition of $A_{L,D}^\varepsilon$.

\subsection{2DeteCT}

\begin{figure}[t]
\centering
\subfigure[Target]{\includegraphics[height=3.85cm]{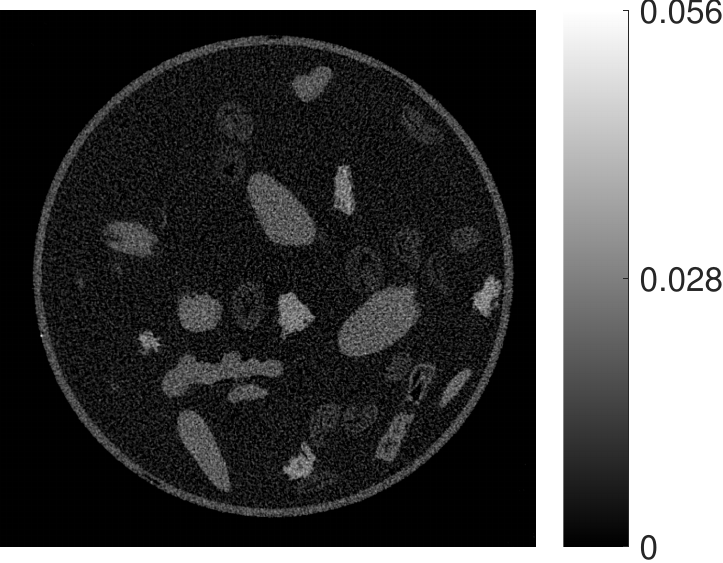}}
\hfill
\subfigure[$\hat{A}_{L,D}^\varepsilon$]{\includegraphics[height=3.85cm]{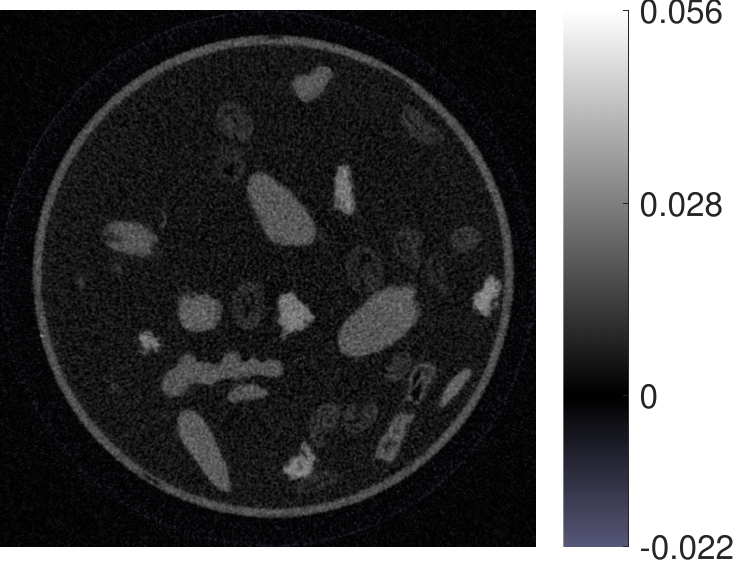}}
\hfill
\subfigure[$A_{L,D}^\varepsilon$]{\includegraphics[height=3.85cm]{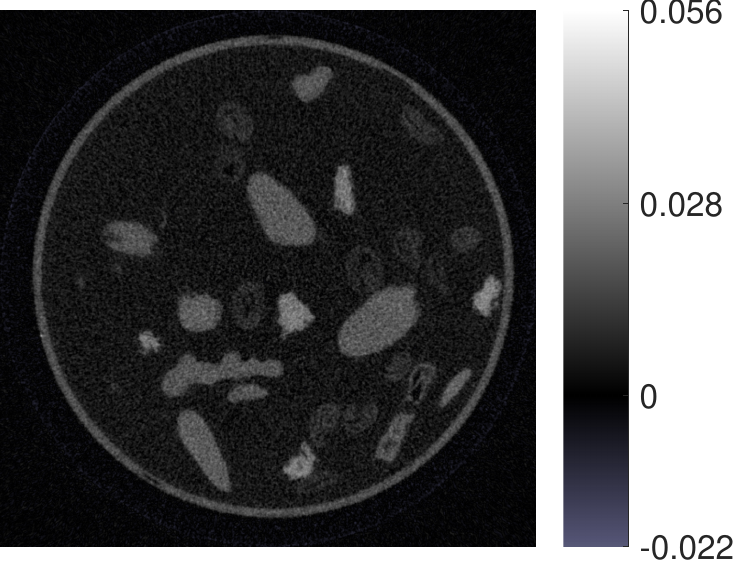}}
\hfill
\subfigure[Ram-Lak]{\includegraphics[height=3.85cm]{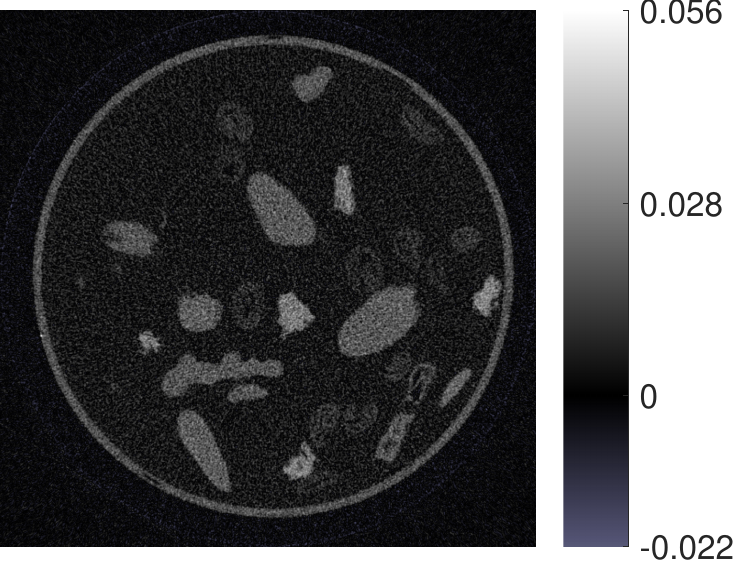}}
\hfill
\subfigure[Shepp-Logan]{\includegraphics[height=3.85cm]{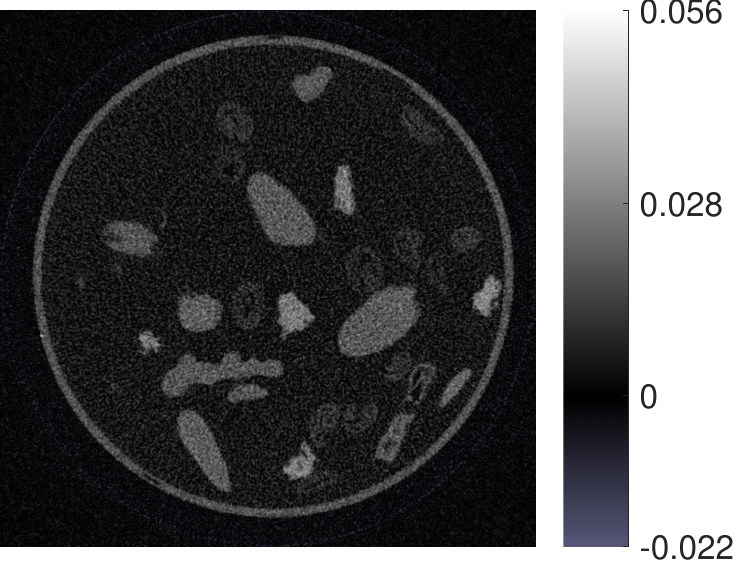}}
\hfill
\subfigure[Cosine]{\includegraphics[height=3.85cm]{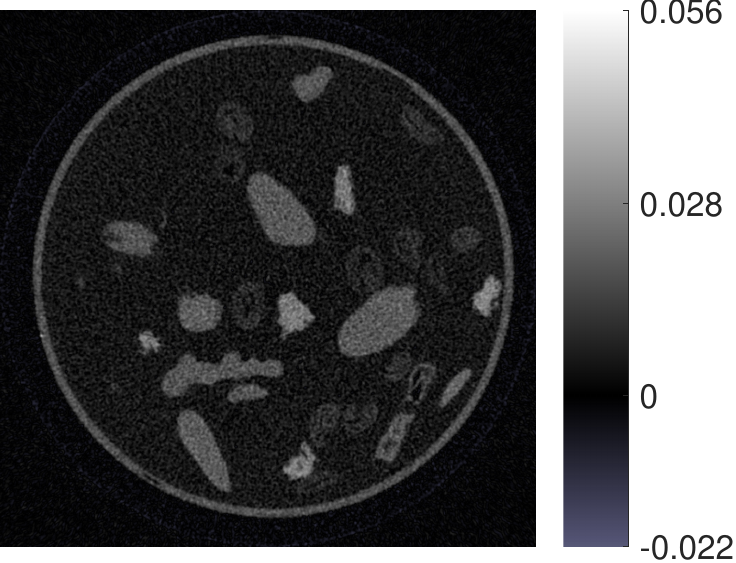}}
\hfill
\subfigure[MR-FBP]{\includegraphics[height=3.85cm]{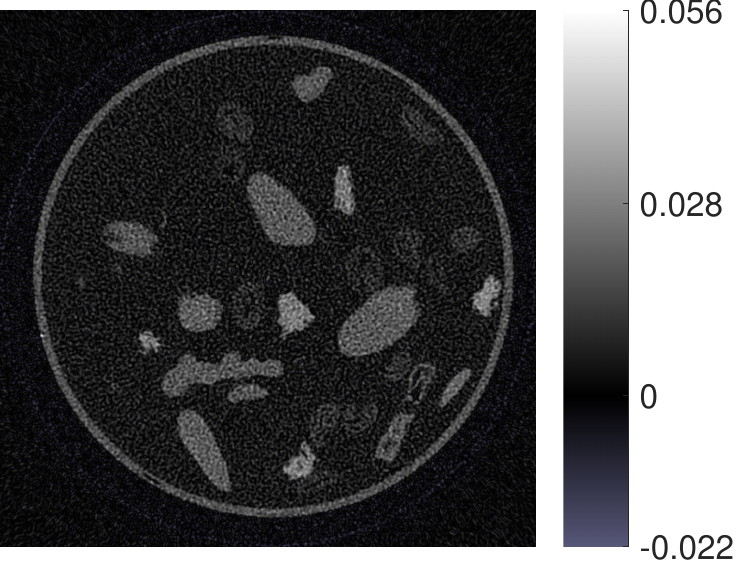}}
\hfill
\subfigure[MR-FBP$_\mathrm{GM}$]{\includegraphics[height=3.85cm]{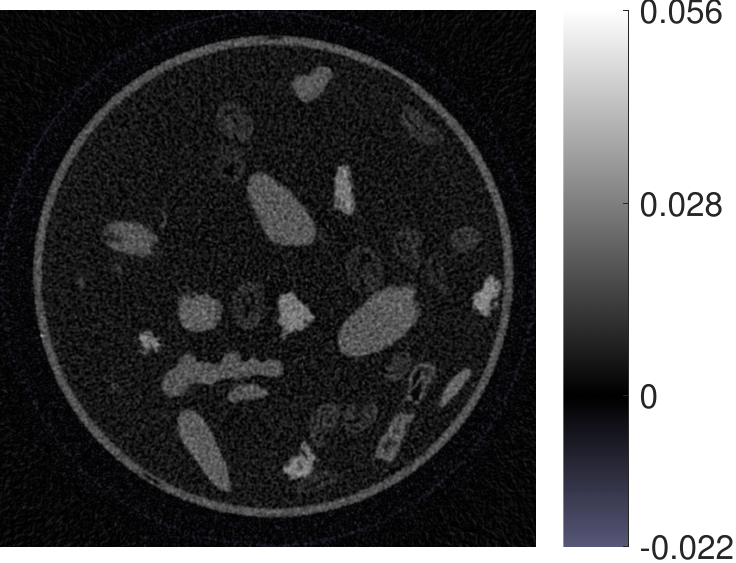}}
\hfill
\subfigure[SIRT-FBP]{\includegraphics[height=3.85cm]{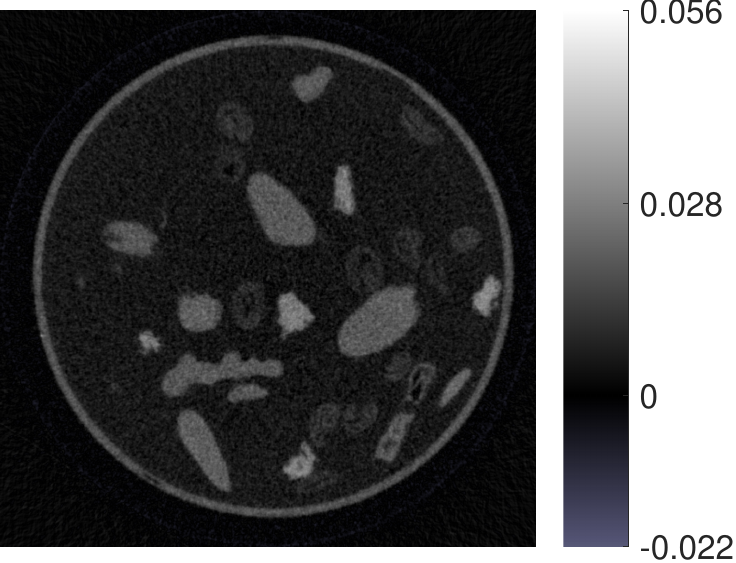}}
\caption{Reconstructions of 2DeteCT slice 1942 (mode 1).}
\label{fig:2detect_1942}
\end{figure}

In our third and last set of numerical experiments, we evaluate the performance of the different filters on realistic Radon data.
To this end, we make use of the 2DeteCT dataset~\cite{Kiss2023} consisting of real X-ray CT measurements in three acquisition modes.
Here, we focus on mode 1 yielding noisy low-dose fan-beam Radon data of slices of various test samples that produce similar image features as exhibited in medical abdominal CT scans.
In addition to the $5000$ slices of raw projection data, the dataset provides target reconstructions that serve as ground truth and were computed via an iterative reconstruction scheme to solve a non-negative least squares problem based on Nesterov accelerated gradient descent.

In our experiments, we first transform the provided fan-beam data into a parallel scanning geometry, which is needed for applying the discrete FBP reconstruction formula~\eqref{eq:discrete_FBP_formula}. 
Thereon, we subsample the sinograms with factor 2 in both the radial and angular variable, which serves as basis for the FBP reconstructions on an equidistant grid of $1024 \times 1024$ pixels.

Exemplary reconstructions are depicted in Figure~\ref{fig:2detect_1942}, where we use the same filters as in our previous experiments.
Note, however, that in this setting our optimized filter $A_{L,D}^\ast$ cannot be used as noise-free Radon data is not available.
Also the ERM-FBP method from~\cite{Kabri2024} cannot be applied due to the lack of training data consisting of noise-free sinograms and noise samples.
For the other filters, involved parameters were optimized w.r.t.\ the mean MSE of the first $50$ slices of the dataset. 
For the Hamming filter, the optimal parameter is $\beta = 1$ so that it agrees with the Ram-Lak filter.
For $A_{L,D}^\varepsilon$, we use $\varepsilon = 0.045$ and for $\hat{A}_{L,D}^\varepsilon$ the optimal choice is $\varepsilon = 0.032$.

Visually, all filters yield comparable results that are close to the 2DeteCT target reconstruction, which is shown in Figure~\ref{fig:2detect_1942}~(a).
However, our filters $A_{L,D}^\varepsilon$ and $\hat{A}_{L,D}^\varepsilon$ seem to produce reconstruction with the best noise reduction.
Also in terms of MSE, our filters show the best performance with $9.0792 \cdot 10^{-6}$ for $A_{L,D}^\varepsilon$ and $9.1616 \cdot 10^{-6}$ for $\hat{A}_{L,D}^\varepsilon$.
The next best performance is achieved by the Shepp-Logan filter with an MSE of $9.1803 \cdot 10^{-6}$, followed by the Cosine filter with $9.5116 \cdot 10^{-6}$ and the MR-FBP$_\mathrm{GM}$ method with $9.8905 \cdot 10^{-6}$.
The worst performance is achieved by the Ram-Lak filter with an MSE of $1.0703 \cdot 10^{-5}$, followed by the SIRT-FBP method with $1.0742 \cdot 10^{-5}$ and lastly the MR-FBP method with an error of $1.2231 \cdot 10^{-5}$.

Remarkably, although the measurement noise is not expected to be Gaussian, our simple workaround $A_{L,D}^\varepsilon$ performs best, closely followed by $\hat{A}_{L,D}^\varepsilon$, which are explicitly designed for Gaussian noise.
We expect that the performance of $\hat{A}_{L,D}^\varepsilon$ can be further improved by utilizing a denoising scheme specifically tailored to the true noise in the Radon measurements.

\section{Discussion and Outlook}\label{sec:outlook}

This work focuses on improving the reconstruction quality of the approximate filtered back projection reconstruction $f_L^\varepsilon$ in computerized tomography by optimizing the applied low-pass filter $A_L$ with bandwidth $L>0$.
The definition of our filter function is motivated by investigations of the noise behaviour of measured CT data, resulting in an approximate Gaussian white noise model.
Based on this, our proposed filter minimizes the expected squared error $\Ex(\Vert f_L^\varepsilon - f\Vert_{\L^2(\R^2)}^2)$ given infinite noisy measurement data with noiselevel $\varepsilon>0$ for a fixed target attenuation function~$f$.
Our resulting filter $A_L^\ast$ depends on the noise approximation $h_a$, noiselevel $\varepsilon$ and Radon data~$\Radon f$. We extend this filter to handle finitely many measurements corrupted by standard discrete additive Gaussian white noise in a parallel scanning geometry. The discretized filter $A_{L,D}^\ast$ then depends on the variance of the discrete Gaussian white noise, the discretization parameters and the true Radon data $\Radon f$ of the target function~$f$. 
To circumvent the dependency on $\Radon f$, which is rarely known in practice, we propose two adaptations. The first replaces $\Radon f$ by the measurements $g_D^\varepsilon$, resulting in the filter $A_{L,D}^\varepsilon$. The second replaces $\Radon f$ by Wiener filtered data $\hat{g}_D^\varepsilon$, resulting in the filter $\hat{A}_{L,D}^\varepsilon$.

In addition to our theoretical investigations, we conduct extensive numerical experiments to evaluate the performance of our optimized filter function and its adaptations compared to both standard and proposed filters from the literature. These experiments are performed on the conventional Shepp-Logan phantom, a modified Shepp-Logan phantom to examine the impact of underestimating noise, and the 2DeteCT dataset to test the reconstruction quality for real X-ray CT measurements.
The results reveal that our proposed filter functions $A_{L,D}^\ast$ and $\hat{A}_{L,D}^\varepsilon$ significantly surpass all dataset-independent filter functions in terms of MSE and SSIM and yield reconstruction qualities comparable to the dataset-dependent ERM-FBP filter. Moreover, our proposed filters are easy to implement, similar to standard filter functions, thus providing an out-of-the-box solution with theoretical justification.
Consequently, our filters bridge the gap in the literature between filters without closed-form representation {\textendash} requiring iterative schemes in advance or minimization problems for each measurement {\textendash} and filters requiring a training dataset, which is often challenging to obtain in practical applications.

We observe that our adaptation $\hat{A}_{L,D}^\varepsilon$ shows promising results in particular for the real 2DeteCT data, where $A_{L,D}^\ast$ is not applicable, and in scenarios with inaccurate noise level estimation, offering a good balance between noise reduction and the reconstruction of finer details. However, our chosen denoising technique for calculating $\hat{A}_{L,D}^\varepsilon$ might not be optimal for real-world data. Therefore, we propose exploring different denoising strategies in future works, where data-driven approaches seem promising, as they can learn the true noise distribution from the measured data.
Besides using denoised measured data, one could also employ deep learning to overcome the dependence of the true Radon data $\Radon f$. This approach also allows for including penalty terms in the loss function to incorporate prior knowledge.
Beyond these natural adaptations of our proposed filters, future work could also build on our theoretical investigations and modify the underlying minimization problem by, for instance, changing the loss function, adding additional penalty terms, or directly optimizing in the discrete setting.

\smallskip

In summary, our analytical derivations provide a solid foundation for future investigations, where we see particular potential in combining our theoretical results with neural networks to overcome the dependence of the filter on the target function and further improve the reconstruction quality.


\end{document}